\newtheorem{lemma}{Lemma}[section]
\newtheorem{theorem}{Theorem}[section]
\newtheorem{remark}{Remark}[section]
\newtheorem{corollary}{Corollary}[section]
\numberwithin{equation}{section}
\begin{document}
\title[Boundary layer for plane parallel MHD flow]{Stability of the boundary layer expansion for the 3D plane parallel MHD flow }
\thanks{$^*$Corresponding author}
\thanks{{\it Keywords}: Boundary layer; plane parallel channel flows; MHD equations; Prandtl theory.}
\thanks{{\it AMS Subject Classification}: 76N10, 35Q30, 35R35}%
\author[Shijin Ding, Zhilin Lin, Dongjuan Niu]{Shijin Ding, Zhilin Lin, Dongjuan Niu$^*$}
\address[S. Ding]{South China Research Center for Applied Mathematics and Interdisciplinary Studies, South China Normal University,
Guangzhou, 510631, China}\address{School of Mathematical Sciences, South China Normal University,
Guangzhou, 510631, China}
\email{dingsj@scnu.edu.cn}
\address[Z. Lin]{School of Mathematical Sciences, South China Normal University,
Guangzhou, 510631, China}
\email{zllin@m.scnu.edu.cn}

\address[Corresponding author: D. Niu]{School of Mathematical Sciences, Capital Normal University,
Beijing, 100048, China}
\email{djniu@cnu.edu.cn}
\date{\today}

\begin{abstract}
In this paper, we establish the mathematical validity of the Prandtl boundary layer theory for a class of nonlinear plane parallel flows of viscous
incompressible magnetohydrodynamic (MHD) flow with no-slip boundary condition of velocity and perfectly conducting wall for magnetic fields. The convergence is shown under various Sobolev norms, including the physically important space-time
uniform norm $L^\infty(H^1)$. In addition, the similar convergence results are also obtained under the case with uniform magnetic fields. This implies the stabilizing effects of magnetic fields. Besides, the higher-order expansion is also considered.
\end{abstract}

\maketitle

\vspace{-5mm}

\section{Introduction}

The system of magnetohydrodynamics (MHD) is a very fundamental model to describe the motion of fluid with electromagnetic field. In this paper, we consider the viscous, incompressible MHD equations in a periodic channel $(x,y,z) \in \Omega:=\mathbb{T}^2\times [0,1](\mathbb{T}=[0,L])$
\begin{equation}\label{1.1}
\left \{
\begin{array}{lll}
\partial_t u^\varepsilon-\nu \Delta u^\varepsilon+(u^\varepsilon \cdot \nabla)u^\varepsilon-(H^\varepsilon \cdot \nabla)H^\varepsilon+\nabla p^\varepsilon=f,\\
\partial_t H^\varepsilon-\kappa\Delta H^\varepsilon+(u^\varepsilon \cdot \nabla)H^\varepsilon-(H^\varepsilon \cdot \nabla)u^\varepsilon=0,\\
\nabla \cdot u^\varepsilon=\nabla \cdot H^\varepsilon=0,
\end{array}
\right.
\end{equation}
where $u^\varepsilon, H^\varepsilon, f \in\mathbb{R}^3$ and $p^\varepsilon\in\mathbb{R}$ are the velocity fields, magnetic fields, external force and pressure, respectively. And $\nu,\kappa$ are the viscosity and resistivity coefficients, respectively. In this paper, we suppose that the viscosity and resistivity coefficients are both $\nu=\kappa=:\varepsilon >0$ for simplicity.

In this paper, we intend to describe the inviscid and vanishing resistivity limit of the MHD system in a channel, therefore it is obvious that the boundary conditions have played an important role in our problem. Motivated by the physical literatures \cite{Davidson,G-P,Gilbert,CLiu2,CLiu3} and so on, we naturally consider the following boundary conditions and initial data for \eqref{1.1}:
\begin{equation}\label{1.1b}
\left \{
\begin{array}{lll}
u^\varepsilon|_{z=i}=\alpha^i(t;x), \ \ i=0,1,\\
(\partial_z H^\varepsilon_j,H^\varepsilon_3)|_{z=i}=(0,0), \ i=0,1, \ j=1,2,\\
(u^\varepsilon,H^\varepsilon)|_{t=0}=(u_0,H_0),
\end{array}
\right.
\end{equation}
i.e., the Dirichlet boundary condition is imposed on the velocity fields and the perfectly conducting boundary condition is imposed on the magnetic fields, respectively. Here $\alpha^i=(\alpha^i_1,\alpha^i_2,\alpha^i_3)(t;x)$.
It should be pointed out that  \eqref{1.1b} is reduced to the classical no-slip boundary condition $u^\epsilon|_{\partial\Omega}=0$ provided that $\alpha^i=0$. For more details about \eqref{1.1b}, refer to \cite{Davidson,G-P,Gilbert,CLiu2,CLiu3} and the references therein for instance.

Formally letting $\varepsilon \to 0$, we obtain the ideal MHD equations
\begin{equation}\label{1.1c}
\left \{
\begin{array}{lll}
\partial_t u^0+(u^0 \cdot \nabla)u^0-(H^0 \cdot \nabla)H^0+\nabla p^0=f,\\
\partial_t H^0+(u^0 \cdot \nabla)H^0-(H^0 \cdot \nabla)u^0=0,\\
\nabla \cdot u^0=\nabla \cdot H^0=0.
\end{array}
\right.
\end{equation}
For \eqref{1.1c}, the boundary conditions and initial data \eqref{1.1b} was correspondingly supplied with the following
\begin{equation}\label{1.1d}
\left \{
\begin{array}{lll}
u^0_3|_{z=i}=0, \ \ i=0,1,\\
H^0_3|_{z=i}=0, \ i=0,1, \\
(u^0,H^0)|_{t=0}=(u_0,H_0).
\end{array}
\right.
\end{equation}

Similar to inviscid limit process from the Navier-Stokes equations to Euler equations, there is a mismatch between the tangential components of the velocity and magnetic fields, which is  so-called  the boundary layer introduced by Prandtl \cite{Prandtl} in 1900s.  More precisely, there is a thin layer of width of order $\sqrt{\varepsilon}$ near the boundary, where the viscous solutions $(u^\varepsilon,H^\varepsilon)$ change dramatically from the boundary data to the ideal MHD flow $(u^0,H^0)$.  The solution of Prandtl is that the ``viscous" solution can be decomposed into three parts: the ``inviscid" solution, boundary layer solutions and the higher order remainder terms. Following the idea of Prandtl, we can similarly deal with the MHD flow, see \cite{CLiu2,CLiu3} and the references therein for more discussions. In a word,  the solution of (\ref{1.1}) can be decomposed into the following form:
\begin{equation}\label{1.7}
\mathrm{Viscous \ \ MHD}\backsimeq\mathrm{Ideal \ \ MHD}+\mathrm{Boundary \ \ layer}+O(\sqrt{\epsilon}).
\end{equation}
To verify the above expansion of (\ref{1.7}), there are at least the following problems to be investigated:

(a) The well-posedness of the boundary layer equations;

(b) The justification of (\ref{1.7}) (or the validity of the boundary layer expansion), including the convergence for the error solutions, which is the difference between viscous solutions and the approximations.

Due to the strong coupling between the magnetic fields and velocity fields in MHD system,
there are few results about this topic compared with those of Navier-Stokes equations. For the steady MHD without magnetic diffusion, Wang and Ma \cite{JWang} showed that well-posedness for the steady MHD boundary layer equations.  A great progress is obtained by Liu, Xie and Yang \cite{CLiu2,CLiu3}. Without the classical Oleinik monotone condition, the local in time well-posedness and validity for the two-dimensional MHD boundary layer problem are obtained under the condition that the tangential magnetic field is non-degenerate. The vanishing viscosity limit of the 3D viscous MHD system in a class of bounded domains with slip boundary conditions was obtained by Xiao, Xin and Wu \cite{YXiao}. With the standard Dirichlet boundary conditions, Wang and Xin \cite{SWang} established the uniform stability of the Prandtl's type boundary layers for a special non-trivial class of initial data. The viscosity vanishing limit for the nonlinear pipe magnetohydrodynamic flow with fixed magnetic diffusion was shown by Wu and Wang \cite{ZWu}. The boundary layer problem for the incompressible
MHD system with non-characteristic perfect
conducting wall boundary condition was studied by Wang and Wang \cite{SWang0}.
For MHD plane parallel flow with different viscosity and magnetic diffusion, Wang and Wang \cite{NWang} studied the boundary layer theory about the nonhomegenous Dirichlet boundary conditions and proved the convergence in $L^2(H^1)$ sense. Very recently, the validity of the boundary layer theory for the steady viscous MHD flow as both viscosity and magnetic diffusion vanishing has been conducted in a work by the first two authors and Xie \cite{Ding2}.

Motivated by \cite{CLiu2,CLiu3}, we intend to investigate the MHD system with the same boundary conditions as \cite{CLiu3} in 3D channel, i.e., \eqref{1.1}-\eqref{1.1c}.  Compared with the weak boundary layer problem studied in \cite{YXiao}, it is very interesting to study the Prandtl layer theory with no-slip boundary conditions, which will generate the strong boundary layer. Additionally, another problem is that whether the Prandtl layer theory can be global in time or not. In this work, we intend to study a special case, i.e., the plane parallel flow in the MHD equations, which can be found in \cite{Mazzucato,XWang}. Precisely, we consider the boundary layer for the MHD plane parallel flow of the form
\begin{equation}\label{1.2}
u^\varepsilon=(u^\varepsilon_1(t;z),u^\varepsilon_2(t;x,z),0), \ H^\varepsilon=(H^\varepsilon_1(t;z),H^\varepsilon_2(t;x,z),0), \ p^\varepsilon=\mathrm{constant}.
\end{equation}
It is easy to check that (\ref{1.2}) is the solution of (\ref{1.1}) with the external force
$$f=(f_1(t;z),f_2(t;x,z),0),$$
which satisfy that
\begin{equation}\label{1.4}
\left \{
\begin{array}{lll}
\partial_t u^\varepsilon_1-\varepsilon \partial_{zz} u^\varepsilon_1=f_1,\\
\partial_t u^\varepsilon_2-\varepsilon \Delta_{x,z}u^\varepsilon_2 +u^\varepsilon_1\partial_x u^\varepsilon_2-H^\varepsilon_1\partial_x H^\varepsilon_2=f_2,\\
\partial_t H^\varepsilon_1-\varepsilon \partial_{zz} H^\varepsilon_1=0,\\
\partial_t H^\varepsilon_2-\varepsilon \Delta_{x,z}H^\varepsilon_2 +u^\varepsilon_1\partial_x H^\varepsilon_2-H^\varepsilon_1\partial_x u^\varepsilon_2=0,\\
\end{array}
\right.
\end{equation}
for $(x,z)\in [0,L]\times [0,1]$, here $\Delta_{x,z}=\partial_{xx}+\partial_{zz}$.
Under the ansatz of the plane parallel MHD flow, the boundary conditions and initial data are imposed as follows:
\begin{equation}\label{1.3}
\left \{
\begin{array}{lll}
u^\varepsilon|_{z=i}=\alpha^i(t;x), \ \ \alpha^i(t;x)=(\alpha^i_1(t),\alpha^i_2(t;x),0), \ \ i=0,1,\\
(\partial_z H^\varepsilon_j,H^\varepsilon_3)|_{z=i}=(0,0), \ i=0,1, \ j=1,2,\\
(u^\varepsilon,H^\varepsilon)|_{t=0}=(u_0,H_0),\ u_0=(a(z),b(x,z),0),\ H_0=(c(z),d(x,z),0).
\end{array}
\right.
\end{equation}

It should be emphasized that the plane parallel flows that we consider here are three-dimensional actually. Thanks to the weak coupling of system (\ref{1.4})-\eqref{1.3}, the well-posedness can be obtained easily. For example, we can get that $u^\varepsilon \in L^\infty(0,T;H^1(\Omega))$ and $H^\varepsilon \in L^\infty(0,T;H^1(\Omega))$ provided that $(u_0,H_0)\in H^1(\Omega) \times H^1(\Omega)$, $\alpha^i,\gamma^i\in H^1$ and $f \in L^\infty(0,T;H^1(\Omega))$. Here we skip more details and refer to \cite{Michel} for the interested readers.

Let $\varepsilon \to 0$, one gets the ideal MHD equations
\begin{equation}\label{1.5}
\left \{
\begin{array}{lll}
\partial_t u^0_1=f_1,\\
\partial_t u^0_2 +u^0_1\partial_x u^0_2-H^0_1\partial_x H^0_2=f_2,\\
\partial_t H^0_1=0,\\
\partial_t H^0_2 +u^0_1\partial_x H^0_2-H^0_1\partial_x u^0_2=0,\\
\end{array}
\right.
\end{equation}
with the following boundary conditions and the same initial data
\begin{equation}\label{1.6}
\left \{
\begin{array}{lll}
(u^0_3,H^0_3)|_{z=i}=(0,0), \ i=0,1,\\
(u^0,H^0)|_{t=0}=(u_0,H_0).
\end{array}
\right.
\end{equation}
We deduce from (\ref{1.5}) and (\ref{1.6}) that
$$H^0_1(t;z)=H^0_1(z)\equiv H_{0,1}(z).$$
Moreover, the $(u^0_2,H^0_2)$ can be determined by a linear hyperbolic system with the initial data. As an example, one can deduce that $u^0 \in L^\infty(0,T;H^1(\Omega))$ and $H^0 \in L^\infty(0,T;H^1(\Omega))$ provided that $(u_0,H_0)\in H^1(\Omega) \times H^1(\Omega)$ and $f \in L^\infty(0,T;H^1(\Omega))$.
More about the well-posedness and regularity theory of ideal MHD can be found in \cite{Secchi,Yanagisawa} and the references therein.

It is necessary to discuss the challenges in our problem. First, due to the appearance of the magnetic field, one should construct the approximate solutions not only for the velocity field, but also for the magnetic field. Thanks to the good structure of the flow, which leads to the weak coupling between the velocity and magnetic fields, we are possible to decouple the two boundary layers in order. Furthermore, it should be pointed out that the constructions of the approximate solutions for magnetic fields are different from that for velocity fields due to the perfectly conducting boundary condition. Precisely, the perfectly conducting conditions would result in an addition boundary term of the ideal MHD flow, which is not the same order of $\varepsilon$ as that of the boundary layer correctors. To overcome the difficulties, the key idea is to introduce the boundary corrector, which cancels the boundary terms resulted from the ideal MHD flows. This idea may be applied in more other problems such as \cite{Ding2,CLiu3}. More details about the constructions of the approximate solutions can be found in Section \ref{approximate}.
Fortunately, we prove the convergence rate in $L^\infty$ norm in Theorem \ref{thm1} is the order of $\sqrt{\varepsilon}$ and all the results are global in time. In addition, we also obtain the higher convergence rate in $L^\infty(H^1)$ norm sense. This may be resulted from stabilizing effect of the magnetic fields and the good structure of the plane parallel MHD flow.

Besides the perfectly conducting wall condition of the magnetic fields in \eqref{1.1c}, we are also interested in the no-slip boundary conditions to \eqref{1.1}, i.e.,
\begin{equation}\label{1.3a:nonslip}
\left \{
\begin{array}{lll}
u^\varepsilon|_{z=i}=\alpha^i(t;x), \ \ \alpha^i(t;x)=(\alpha^i_1(t),\alpha^i_2(t;x),0), \ \ i=0,1,\\
H^\varepsilon|_{z=i}=\gamma^i(t;x), \ \ \gamma^i(t;x)=(\gamma^i_1(t),\gamma^i_2(t;x),0), \ \ i=0,1,\\
(u^\varepsilon,H^\varepsilon)|_{t=0}=(u_0,H_0),\ u_0=(a(z),b(x,z),0),\ H_0=(c(z),d(x,z),0),
\end{array}
\right.
\end{equation}
which will be described in detail in Section \ref{stabilityeffect}.

It is our second concern to verify the convergence of solution of \eqref{1.1} with uniform magnetic background to that of \eqref{1.1c} when $\varepsilon$ vanishes, which is equivalent to vanishing viscosity and resistivity limit of \eqref{1.1} with nonhomogeneous Dirichlet condition \eqref{1.3a:nonslip}. In this case, the boundary condition for the magnetic fields can be considered as no-slip boundary condition, which may generate a strong boundary layer effect. To our knowledge, for general case, even in 2-D problem, it still remains open because it is challenging to control the behaviour of the vorticity (or the stream function) of the magnetic fields near the boundaries. In this paper, we attempt to answer this type of problem and try to find out the stabilizing effects of the magnetic fields, although by means of the structure of plane MHD flow and the uniformly magnetic fields. It is mentioned that the construction for the approximate solutions here may be more simple than that case with perfectly conducting wall, since there is no any additional boundary terms resulted from the ideal MHD flows and we only need to match the order of $\varepsilon$ for the boundary conditions. Compared with the previous work \cite{G-P,CLiu0}, which developed this problem in the linear level with the perturbation around some shear flows, we are inspiring in Theorem \ref{thm:nonslip} to prove the same convergence rate as the case of the perfectly conducting wall conditions, which includes the higher Sobolev norm $L^\infty(H^1)$ and gives us a clue about the stabilizing effects of magnetic fields with the structure of plane parallel flow. In addition, we also mention that it is very difficult to study the problem in general case, even for the 2-D well-posedness of MHD boundary layer equations because we lose to control the tangential derivatives and the additional boundary effect resulted from the magnetic fields.

Before we state the main theorem, we mention that there are some compatibility conditions imposed to ensure the higher order regularity of \eqref{1.4}-\eqref{1.3}. More precisely, the zero-order compatibility conditions would be given as follows:
\begin{equation}\label{1.4a}
\left \{
\begin{array}{lll}
\alpha^i(0;x)=u_0(x,i), \ \ i=0,1,\\
(H_0)_3(x,0)=0,\\
\partial_z (H_0)_j(x,i)=0,\ \ i=0,1, \ \ j=1,2.
\end{array}
\right.
\end{equation}
In addition, the first-order compatibility conditions would be given as follows:
\begin{equation}\label{1.4b}
\left \{
\begin{array}{lll}
\partial_t \alpha^i_1(0)-\varepsilon \partial_{zz}a(i)=f_1(0;i),\\
\partial_t \alpha^i_2(0;x)-\varepsilon \Delta_{x,z}b(x,i)+a(i)\partial_x b(x,i)-c(i)\partial_x d(x,i)=f_2(0;x,i),\\
-\varepsilon \partial_{zzz}c(i)=0,\\
-\varepsilon \Delta_{x,z}\partial_z d(x,i)+\partial_z \left[a(i)\partial_x d(x,i)-c(i)\partial_x b(x,i)\right]=0,
\end{array}
\right.
\end{equation}
where $i=0,1$.

Our main result is stated as follows.
\begin{theorem}\label{thm1}
Suppose that $u_0, H_0 \in H^m(\Omega)$, $f \in L^\infty(0,T;H^m(\Omega))$, \linebreak $\alpha^i \in H^2(0,T; H^m(\partial{\Omega}))$, $i=0,1, m>5$, satisfying the compatibility condition (\ref{1.4a}). Then there exist positive constants $C>0$, independent of $\varepsilon$, such that for any solution $(u^\varepsilon,H^\varepsilon)$ of (\ref{1.4}) with the initial data $(u_0,H_0)$ and boundary data $\alpha^i$ in \eqref{1.3}, satisfying that
\begin{equation}\label{1.73}
\Vert (u^\varepsilon-\tilde{u}^a,H^\varepsilon-\tilde{h}^a) \Vert_{L^\infty(0,T;L^2(\Omega))} \leq C \varepsilon^{\frac{3}{4}},
\end{equation}
\begin{equation}\label{1.74}
\Vert (u^\varepsilon-\tilde{u}^a,H^\varepsilon-\tilde{h}^a) \Vert_{L^\infty(0,T;H^1(\Omega))} \leq C \varepsilon^{\frac{1}{4}},
\end{equation}
\begin{equation}\label{1.75}
\Vert (u^\varepsilon-\tilde{u}^a,H^\varepsilon-\tilde{h}^a)\Vert_{L^\infty((0,T) \times \Omega))} \leq C \sqrt{\varepsilon},
\end{equation}
where $\tilde{u}^a,\tilde{h}^a$ are defined by (\ref{3.17}) in Section \ref{approximate}.
\end{theorem}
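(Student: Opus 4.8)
The plan is to follow the classical energy-method scheme for boundary-layer expansions adapted to the weakly coupled plane-parallel MHD structure. First I would set up the error functions. Write $u^\varepsilon = \tilde u^a + \varepsilon^{\gamma} w$ and $H^\varepsilon = \tilde h^a + \varepsilon^{\gamma} \eta$ for a suitable power $\gamma$ (to be chosen so the right-hand sides balance; one expects $\gamma=\tfrac14$ for the $L^2$-statement), where $(\tilde u^a,\tilde h^a)$ is the approximate solution from \eqref{3.17}. Substituting into \eqref{1.4} and using the equations satisfied by the ideal flow \eqref{1.5} and by the boundary-layer and corrector profiles, the errors $(w,\eta)$ solve a linear parabolic system of the same plane-parallel type: the first components $w_1,\eta_1$ satisfy scalar heat equations with forcing equal to the truncation error (the pieces of the expansion that were not cancelled, plus the commutator between $\Delta_{x,z}$ and the fast variable $z/\sqrt\varepsilon$), and the second components $w_2,\eta_2$ satisfy convection–diffusion equations with the same forcing structure plus lower-order terms coming from $u_1^\varepsilon\partial_x(\cdot)$ and $H_1^\varepsilon\partial_x(\cdot)$. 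Crucially, the boundary data for $(w,\eta)$ are zero up to the order we track (this is precisely what the corrector construction in Section~\ref{approximate} buys us: no leftover boundary term of the wrong order), and the initial data vanish by the compatibility conditions \eqref{1.4a}.

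Next I would carry out the $L^2$ energy estimate. Multiply the $w_1$-equation by $w_1$, the $\eta_1$-equation by $\eta_1$, the $w_2$-equation by $w_2$ and the $\eta_2$-equation by $\eta_2$, integrate over $\Omega$, and add. The diffusion terms give $\varepsilon\|\nabla w\|_{L^2}^2 + \varepsilon\|\nabla \eta\|_{L^2}^2$ with no bad boundary contribution (Dirichlet zero for $w$, and for $\eta$ the perfectly-conducting condition $\partial_z\eta_j=\eta_3=0$ makes the boundary integral vanish). The coupling terms $\int H_1^\varepsilon\partial_x\eta_2\, w_2$ and $\int H_1^\varepsilon\partial_x w_2\,\eta_2$ cancel after integration by parts in $x$ because $H_1^\varepsilon=H_1^\varepsilon(t;z)$ is independent of $x$ — this is the ``weak coupling'' the introduction advertises and it is what makes the argument global in time. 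The convection terms $\int u_1^\varepsilon\partial_x w_2\,w_2$ and $\int u_1^\varepsilon\partial_x\eta_2\,\eta_2$ vanish for the same reason. What remains is $\tfrac{d}{dt}(\|w\|_{L^2}^2+\|\eta\|_{L^2}^2) \lesssim \|(w,\eta)\|_{L^2}^2 + \|\mathrm{Err}\|_{L^2}^2$, and a Grönwall argument with $T$ fixed closes it, provided the forcing $\mathrm{Err}$ is $O(\varepsilon^{1/2})$ in $L^\infty_tL^2_x$ after dividing by $\varepsilon^{\gamma}$ — which it is, because the uncancelled terms live in a layer of width $\sqrt\varepsilon$ so each such term carries a factor $\varepsilon^{1/4}$ in $L^2$, and the worst true-error term from $\varepsilon\partial_{xx}$ of a boundary-layer profile is $O(\varepsilon)$. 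Tracking the powers gives \eqref{1.73}.

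For \eqref{1.74} I would differentiate the system in $x$ and in $z$ and run the same energy estimate at the level of first derivatives; the structure is identical, the only new subtlety is that $\partial_z$ hitting the boundary-layer profile costs a factor $\varepsilon^{-1/2}$, which degrades the rate from $\varepsilon^{3/4}$ to $\varepsilon^{1/4}$ exactly as stated, and one must check that the boundary terms generated by $\partial_z$-differentiation are still controlled — here again the perfectly-conducting condition and the vanishing boundary data of $(w,\eta)$ are used, together with trace estimates. Finally \eqref{1.75} follows by an anisotropic Sobolev (Agmon-type) inequality in the plane-parallel geometry: since $u_1^\varepsilon,H_1^\varepsilon$ depend only on $z$ one has a one-dimensional Agmon inequality in $z$, while $w_2,\eta_2$ depend on $(x,z)$ and $L^\infty_{x,z}\lesssim \|\cdot\|_{H^1}^{1/2}\|\cdot\|_{H^2}^{1/2}$-type bounds combined with \eqref{1.74} and an $H^2$ energy estimate give the $\sqrt\varepsilon$ rate. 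The main obstacle I anticipate is not any single estimate but the bookkeeping: one must verify that every term produced by plugging the ansatz \eqref{3.17} into \eqref{1.4} is either cancelled by construction or is genuinely of size $O(\varepsilon^{\gamma+1/2})$ (or better) in the relevant norm, paying particular attention to the corrector terms introduced to kill the wrong-order boundary contribution from the ideal magnetic field, and to the cross terms where a slow profile multiplies the $x$-derivative of a fast profile. Getting the exponent accounting exactly right in the $H^1$ estimate — where the $\varepsilon^{-1/2}$ from $\partial_z$ competes against the $\varepsilon^{1/4}$ gained from the layer width — is the delicate point.
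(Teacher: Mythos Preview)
Your proposal is essentially the paper's own argument: direct energy estimates on the error, exploiting that $u_1^\varepsilon,H_1^\varepsilon$ are $x$-independent so the transport and cross terms integrate to zero, then Gronwall, then the anisotropic Sobolev inequality for the $L^\infty$ bound. Two technical points are worth tightening. First, the rescaling $u^\varepsilon=\tilde u^a+\varepsilon^\gamma w$ is unnecessary (and your $\gamma=\tfrac14$ does not match the $L^2$ rate $\varepsilon^{3/4}$); the paper works directly with $(u^{err},h^{err})=(u^\varepsilon-\tilde u^a,H^\varepsilon-\tilde h^a)$. Second, for the $\partial_z$-level estimate the paper does \emph{not} differentiate the equation in $z$ but instead multiplies by $-\partial_{zz}u^{err}_2$ and $-\partial_{zz}h^{err}_2$: since $u^{err}_2|_{z=i}=0$ (hence $\partial_t u^{err}_2=\partial_x u^{err}_2=0$ there) and $\partial_z h^{err}_2|_{z=i}=0$, all boundary terms vanish cleanly, whereas differentiating in $z$ as you suggest would leave a boundary contribution $\varepsilon[\partial_z u^{err}_2\,\partial_{zz}u^{err}_2]_{z=0}^{z=1}$ that is not obviously controlled by trace estimates. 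The loss from $\varepsilon^{3/4}$ to $\varepsilon^{1/4}$ then comes not from ``$\partial_z$ hitting a fast profile'' but from absorbing the $O(\varepsilon^{3/4})$ forcing against the diffusion term $\varepsilon\|\partial_{zz}u^{err}_2\|^2$ via Cauchy. Finally, for the $L^\infty$ bound you do not need a full $H^2$ estimate: the anisotropic inequality (Lemma~\ref{lemma4.1}) requires only $L^\infty_tL^2$ control of $u^{err}_2,\partial_x u^{err}_2,\partial_z u^{err}_2,\partial_{xz}u^{err}_2$, each obtained by the same multiplier trick.
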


\begin{remark}
Our main theorem, Theorem \ref{thm1}, shows that the convergence rates obtained in Navier-Stokes equations can be extended to the MHD flow. It is noted that the convergence rates for the magnetic field is the same as for the velocity field. It seems that the boundary layer is essentially resulted from the mismatch of the boundary conditions for the velocity field.
\end{remark}

\begin{remark}
Indeed, due to the good structure of the plane MHD flow, the leading order profiles in the approximate solutions will obey weak coupling system, which inspires that the convergence theory can be established and the results are global in time.
\end{remark}
\begin{remark}
Indeed, our results can be extended to the case that the viscosity and resistivity coefficients are $\nu\varepsilon$ and $\kappa\varepsilon$ with constants $\nu,\kappa>0$, respectively, which is also studied in \cite{CLiu2,CLiu3}.
\end{remark}

Through this paper, we denote
$$\langle Z\rangle :=\sqrt{1+|Z|^2}.$$

The rest of this paper is organized as follows: Section \ref{boundarylayerequ} is devoted to deriving the boundary layer equations and boundary conditions for the correctors; the modified approximate solutions will be constructed in Section \ref{approximate}; our main theorem will be proved in Section \ref{pf}; Section \ref{stabilityeffect} is devoted to studying the problem with uniform magnetic background; higher order expansions and improved convergence rates will be obtained in Section \ref{improved}.

\section{The boundary layer equations for the correctors}\label{boundarylayerequ}
In this section, we will derive the boundary layer type equations for the correctors. The approach to a rigorous boundary layer analysis that we take is to derive the equations for the correctors, which is the difference between the viscous MHD solutions $(u^\varepsilon,H^\varepsilon,0)$ and the ideal MHD solutions $(u^0,H^0,0)$,
We assume that the viscous MHD solutions are well approximated by
\begin{equation}\label{2.1}
\left \{
\begin{array}{lll}
u^a_1(t;z)&:=u^{ou}_1(t;z)+\theta^0_1\left(t;\frac{z}{\sqrt{\varepsilon}}\right)+\theta^{u,0}_1\left(t;\frac{1-z}{\sqrt{\varepsilon}}\right),\\
u^a_2(t;x,z)&:=u^{ou}_2(t;x,z)+\theta^0_2\left(t;x,\frac{z}{\sqrt{\varepsilon}}\right)+\theta^{u,0}_2\left(t;x,\frac{1-z}{\sqrt{\varepsilon}}\right),\\
h^a_1(t;z)&:=H^{ou}_1(t;z)+h^0_1\left(t;\frac{z}{\sqrt{\varepsilon}}\right)+h^{u,0}_1\left(t;\frac{1-z}{\sqrt{\varepsilon}}\right),\\
h^a_2(t;x,z)&:=H^{ou}_2(t;x,z)+h^0_2\left(t;x,\frac{z}{\sqrt{\varepsilon}}\right)+h^{u,0}_2\left(t;x,\frac{1-z}{\sqrt{\varepsilon}}\right),
\end{array}
\right.
\end{equation}
where the correctors satisfy that
\begin{equation}\label{2.1}
(\theta^0_i,h^0_i) \to 0 \ \ \mathrm{as} \ Z \to \infty; \ \ \ (\theta^{u,0}_i,h^{u,0}_i) \to 0 \ \ \mathrm{as} \ Z^u \to \infty,
\end{equation}
in which $i=1,2, Z=z/\sqrt{\varepsilon}$ and $Z^u=(1-z)/\sqrt{\varepsilon}$.

Every part in the approximate solutions satisfy the following problems:

\textbf{(I) The outer solutions $(u^{ou},H^{ou})$.}

The outer solutions $(u^{ou},H^{ou})$ satisfy the ideal MHD equations (\ref{1.5}) with the initial data
\begin{equation}\label{2.2}
(u^0,H^0)|_{t=0}=(u_0,H_0).
\end{equation}
The uniqueness of the solutions to the system implies that $(u^{ou},H^{ou})\equiv (u^0,H^0)$.

\textbf{(II) The lower correctors $(\theta^0_1,\theta^0_2,h^0_1,h^0_2)$.}

The lower correctors $(\theta^0_1,\theta^0_2,h^0_1,h^0_2)$ satisfy that
\begin{equation}\label{2.3}
\left \{
\begin{array}{lll}
\partial_t \theta^0_1-\partial_{ZZ}\theta^0_1=0,\\
\partial_t \theta^0_2-\partial_{ZZ}\theta^0_2+(u^0_1(t;0)+\theta^0_1)\partial_x \theta^0_2+\theta^0_1\partial_x u^0_2(t;x,0)\\
\quad\quad\quad  -(H^0_1(t;0)+h^0_1)\partial_x h^0_2-h^0_1\partial_x H^0_2(t;x,0)=0,\\
\partial_t h^0_1-\partial_{ZZ}h^0_1=0,\\
\partial_t h^0_2-\partial_{ZZ}h^0_2+(u^0_1(t;0)+\theta^0_1)\partial_x h^0_2+\theta^0_1\partial_x H^0_2(t;x,0)\\
\quad\quad\quad  -(H^0_1(t;0)+h^0_1)\partial_x \theta^0_2-h^0_1\partial_x u^0_2(t;x,0)=0,\\
(\theta^0_1,\theta^0_2,\partial_Z h^0_1,\partial_Z h^0_2)|_{Z=0}=(\alpha^0_1(t)-u^0_1(t;0),\alpha^0_2(t;x)-u^0_2(t;x,0),0,0),\\
(\theta^0_1,\theta^0_2, h^0_1,h^0_2)|_{Z=\infty}=(0,0,0,0),\\
(\theta^0_1,\theta^0_2, h^0_1,h^0_2)|_{t=0}=(0,0,0,0).
\end{array}
\right.
\end{equation}

\textbf{(III) The upper correctors $(\theta^{u,0}_1,\theta^{u,0}_2,h^{u,0}_1,h^{u,0}_2)$.}

The lower correctors $(\theta^{u,0}_1,\theta^{u,0}_2,h^{u,0}_1,h^{u,0}_2)$ satisfy that
\begin{equation}\label{2.4}
\left \{
\begin{array}{lll}
\partial_t \theta^{u,0}_1-\partial_{Z^uZ^u}\theta^{u,0}_1=0,\\
\partial_t \theta^{u,0}_2-\partial_{Z^uZ^u}\theta^{u,0}_2+(u^0_1(t;1)+\theta^{u,0}_1)\partial_x \theta^{u,0}_2+\theta^{u,0}_1\partial_x u^0_2(t;x,1)\\
\quad\quad\quad  -(H^0_1(t;1)+h^{u,0}_1)\partial_x h^{u,0}_2-h^{u,0}_1\partial_x h^{u,0}_2(t;x,1)=0,\\
\partial_t h^{u,0}_1-\partial_{Z^uZ^u}h^{u,0}_1=0,\\
\partial_t h^{u,0}_2-\partial_{Z^uZ^u}h^{u,0}_2+(u^0_1(t;1)+\theta^{u,0}_1)\partial_x h^{u,0}_2+\theta^{u,0}_1\partial_x h^{u,0}_2(t;x,1)\\
\quad\quad\quad  -(H^0_1(t;1)+h^{u,0}_1)\partial_x \theta^{u,0}_2-h^{u,0}_1\partial_x u^0_2(t;x,1)=0,\\
(\theta^{u,0}_1,\theta^{u,0}_2,\partial_{Z^u} h^{u,0}_1,\partial_{Z^u} h^{u,0}_2)|_{Z=0}\\
 \qquad\qquad \qquad=(\alpha^1_1(t)-u^1_1(t;1),\alpha^1_2(t;x)-u^0_2(t;x,1),0,0),\\
(\theta^{u,0}_1,\theta^{u,0}_2, h^{u,0}_1,h^{u,0}_2)|_{Z^u=\infty}=(0,0,0,0),\\
(\theta^{u,0}_1,\theta^{u,0}_2, h^{u,0}_1,h^{u,0}_2)|_{t=0}=(0,0,0,0).
\end{array}
\right.
\end{equation}

Due to the symmetry of the lower and upper correctors, for simplicity, we would only discuss the problem (\ref{2.3}). As shown in (\ref{2.3}), $\theta^0_1, h^0_1$ satisfy the one-dimensional heat equation with Dirichlet boundary conditions and Neumann boundary condition on $\{Z=0\}$, respectively. Therefore the well-posedness and regularity results are classical (\cite{Evans}). For the problems for $\theta^0_2,h^0_2$, they satisfy a parabolic system, which can be solved by modifying the methods in \cite{Xin1}. Then the full problem (\ref{2.3}) is well-posed, so as for (\ref{2.4}). In addition, all the weighted estimates used in our arguments can be obtained by applying standard energy arguments, here we refer to \cite{Mazzucato} for details and we omit the proof here.

\section{The approximate solutions}\label{approximate}
In this section, we will construct the approximate solutions.
Based on the arguments in Section \ref{boundarylayerequ}, we can know that the each corrector is well-defined, then the approximations are well-defined. To study our problem, we introduce the modified approximate solutions with a cut-off function, which can be found in \cite{Mazzucato} for instance.

Let $\psi(z)$ be a smooth function on $[0,1]$ with
\begin{equation}\label{3.1}
\psi(z)=\left \{
\begin{array}{lll}
1,& z\in [0,\frac{1}{3}],\\
0,& z \in [\frac{1}{2},1],\\
\mathrm{smooth},& \mathrm{otherwise}.
\end{array}
\right.
\end{equation}
It is easy to check that $\psi(z)\psi(1-z)=0$ for any $z\in [0,1]$.

We introduce the truncated approximations as follows
\begin{equation}\label{3.2}
\left \{
\begin{array}{lll}
\tilde{u}^a_1(t;z)&:=u^{0}_1(t;z)+\psi(z)\theta^0_1\left(t;\frac{z}{\sqrt{\varepsilon}}\right)+\psi(1-z)\theta^{u,0}_1\left(t;\frac{1-z}{\sqrt{\varepsilon}}\right),\\
\tilde{u}^a_2(t;x,z)&:=u^{0}_2(t;x,z)+\psi(z)\theta^0_2\left(t;x,\frac{z}{\sqrt{\varepsilon}}\right)+\psi(1-z)\theta^{u,0}_2\left(t;x,\frac{1-z}{\sqrt{\varepsilon}}\right),\\
\tilde{h}^a_1(t;z)&:=H^{0}_1(t;z)+\psi(z)h^0_1\left(t;\frac{z}{\sqrt{\varepsilon}}\right)+\psi(1-z)h^{u,0}_1\left(t;\frac{1-z}{\sqrt{\varepsilon}}\right),\\
\tilde{h}^a_2(t;x,z)&:=H^{0}_2(t;x,z)+\psi(z)h^0_2\left(t;x,\frac{z}{\sqrt{\varepsilon}}\right)+\psi(1-z)h^{u,0}_2\left(t;x,\frac{1-z}{\sqrt{\varepsilon}}\right),
\end{array}
\right.
\end{equation}
then the $(\tilde{u}^a,\tilde{H}^a)$ satisfy that
\begin{equation}\label{3.3}
\left \{
\begin{array}{lll}
\partial_t \tilde{u}^a_1-\varepsilon \partial_{zz} \tilde{u}^a_1=f_1+A+B,\\
\partial_t \tilde{u}^a_2-\varepsilon \Delta_{x,z}\tilde{u}^a_2 +\tilde{u}^a_1\partial_x \tilde{u}^a_2-\tilde{h}^a_1\partial_x \tilde{h}^a_2=f_2+C+D+E,\\
\partial_t \tilde{h}^a_1-\varepsilon \partial_{zz} \tilde{h}^a_1=F+G,\\
\partial_t \tilde{h}^a_2-\varepsilon \Delta_{x,z}\tilde{h}^a_2 +\tilde{u}^a_1\partial_x \tilde{h}^a_2-\tilde{h}^a_1\partial_x \tilde{u}^a_2=H+I+J,\\
\end{array}
\right.
\end{equation}
where the remainders are given by
\begin{equation}\label{3.5}
A=-2\sqrt{\varepsilon}\left(\psi'(z)\partial_Z \theta^0_1+\psi'(1-z)\partial_{Z^u}\theta^{u,0}_1\right),
\end{equation}
\begin{equation}\label{3.6}
B=-\varepsilon\left(\partial_{zz}u^0_1+\psi''(z)\theta^0_1+\psi''(1-z)\theta^{u,0}_1\right),
\end{equation}
\begin{equation}\label{3.7}
\begin{array}{lll}
C=&\psi(z)(\psi(z)-1)\left(\theta^0_1\partial_x\theta^0_2-h^0_1\partial_xh^0_2\right)\\
&+\psi(1-z)(\psi(1-z)-1)\left(\theta^{u,0}_1\partial_x\theta^{u,0}_2-h^{u,0}_1\partial_xh^{u,0}_2\right),
\end{array}
\end{equation}
\begin{equation}\label{3.8}
\begin{array}{lll}
D=&\sqrt{\varepsilon}\Big[\psi(z)\big(Z\partial_z u^0_1(t;0)\partial_x \theta^0_2+Z\theta^0_1\partial_{zx}u^0_2(t;x,0)-Z\partial_z H^0_1(t;0)\partial_x h^0_2\\
&-Z\partial_{zx}H^0_2(t;x,0)h^0_1\big)-\psi(1-z)\big(Z^u\partial_z u^0_1(t;1)\partial_x \theta^{u,0}_2\\
&+Z^u\theta^{u,0}_1\partial_{zx}u^0_2(t;x,1)-Z^u\partial_z H^0_1(t;1)\partial_x h^{u,0}_2-Z^u\partial_{zx}H^0_2(t;x,1)h^{u,0}_1\big)\\
&-2\psi'(z)\partial_Z\theta^0_2-2\psi'(1-z)\partial_{Z^u}\theta^{u,0}_2\Big],
\end{array}
\end{equation}
\begin{equation}\label{3.9}
\begin{array}{lll}
E=&\varepsilon\bigg(-\Delta_{x,z}u^0_2-\psi(z)\partial_{xx}\theta^0_2\\
&-\psi(1-z)\partial_{xx}\theta^{u,0}_2-\psi''(z)\theta^0_2-\psi''(1-z)\theta^{u,0}_2\bigg),
\end{array}
\end{equation}
\begin{equation}\label{3.10}
F=-2\sqrt{\varepsilon}\left(\psi'(z)\partial_Z h^0_1+\psi'(1-z)\partial_{Z^u}h^{u,0}_1\right),
\end{equation}
\begin{equation}\label{3.11}
G=-\varepsilon\left(\partial_{zz}H^0_1+\psi''(z)h^0_1+\psi''(1-z)h^{u,0}_1\right),
\end{equation}
\begin{equation}\label{3.12}
\begin{array}{lll}
H=&\psi(z)(\psi(z)-1)\left(\theta^0_1\partial_x h^0_2-h^0_1\partial_x\theta^0_2\right)\\
&+\psi(1-z)(\psi(1-z)-1)\left(\theta^{u,0}_1\partial_xh^{u,0}_2-h^{u,0}_1\partial_xh^{u,0}_2\right),
\end{array}
\end{equation}
\begin{equation}\label{3.13}
\begin{array}{lll}
I=&\sqrt{\varepsilon}\Big[\psi(z)\big(Z\partial_z u^0_1(t;0)\partial_x h^0_2+Z\theta^0_1\partial_{zx}H^0_2(t;x,0)-Z\partial_z H^0_1(t;0)\partial_x \theta^0_2\\
&-Z\partial_{zx}u^0_2(t;x,0)h^0_1\big)-\psi(1-z)\big(Z^u\partial_z u^0_1(t;1)\partial_x h^{u,0}_2\\
&+Z^u\theta^{u,0}_1\partial_{zx}H^0_2(t;x,1)-Z^u\partial_z H^0_1(t;1)\partial_x \theta^{u,0}_2-Z^u\partial_{zx}u^0_2(t;x,1)h^{u,0}_1\big)\\
&-2\psi'(z)\partial_Zh^0_2-2\psi'(1-z)\partial_{Z^u}h^{u,0}_2\Big],
\end{array}
\end{equation}
\begin{equation}\label{3.14}
\begin{array}{lll}
J=&\varepsilon\bigg(-\Delta_{x,z}H^0_2-\psi(z)\partial_{xx}h^0_2\\
&-\psi(1-z)\partial_{xx}h^{u,0}_2-\psi''(z)h^0_2-\psi''(1-z)h^{u,0}_2\bigg).
\end{array}
\end{equation}

After the above discussions, the initial data and the boundary conditions for approximate solutions are
\begin{equation}\label{3.4}
\left \{
\begin{array}{lll}
(\tilde{u}^a,\tilde{h}^a)|_{t=0}=(u_0,H_0),\\
\tilde{u}^a|_{z=i}=\alpha^i(t;x), \ \ i=0,1.
\end{array}
\right.
\end{equation}
However, the perfectly conducting wall condition for the magnetic fields can not be preserved in the above constructions, i.e., $\partial_z \tilde{h}^a_j|_{z=i}\not=0, i=0,1, j=1,2$.

Due to boundary condition for the magnetic fields is the perfectly conducting condition, we shall introduce a boundary corrector to ensure that the perfectly conducting conditions can be preserved in our approximation.
Introduce a smooth cut-off function $\rho^0(Z)$ in $Z\in [0,\infty)$ with
\begin{equation}\label{3.15}
\rho^0(Z)=\left \{
\begin{array}{lll}
1,& Z\in [0,1],\\
0,& Z \in [2,\infty),\\
\mathrm{smooth},& \mathrm{otherwise},
\end{array}
\right.
\end{equation}
where $Z=\frac{z}{\sqrt{\varepsilon}}.$
Define a boundary corrector $\eta^0=(\eta^0_1,\eta^0_2)$ as
$$\eta^0_1:=-\partial_z H^0_1(0)Z\rho^0(Z), \ \eta^0_2:=-\partial_z H^0_2(t;x,0)Z\rho^0(Z),$$
then we have
$$\partial_Z \eta^0|_{Z=0}=-(\partial_z H^0_1(0),\partial_z H^0_2(t;x,0)).$$
Similar arguments can be applied in $Z^u\in [0,\infty)$: let
$\rho^{u,0}(Z^u)$ in $Z^u\in [0,\infty)$ with
\begin{equation}\label{3.16}
\rho^{u,0}(Z^u)=\left \{
\begin{array}{lll}
1,& Z^u\in [0,1],\\
0,& Z^u \in [2,\infty),\\
\mathrm{smooth},& \mathrm{otherwise},
\end{array}
\right.
\end{equation}
where $Z^u=\frac{1-z}{\sqrt{\varepsilon}}.$
The boundary corrector $\eta^{u,0}=(\eta^{u,0}_1,\eta^{u,0}_2)$ can be defined as
$$\eta^{u,0}_1:=-\partial_z H^0_1(1)Z^u\rho^{u,0}(Z^u), \ \eta^{u,0}_2:=-\partial_z H^0_2(t;x,1)Z^u\rho^{u,0}(Z^u),$$
It is easy to see that $\eta^0,\eta^{u,0}$ are regular enough due to our assumption on the ideal MHD flows $(u^0,H^0)$ and the properties of the cut-off functions.

With this, define
$$\widetilde{h^0_1}(t;Z):=h^0_1(t;Z)+\sqrt{\varepsilon}\eta^0_1(Z), \ \ \widetilde{h^0_2}(t;x,Z):=h^0_2(t;x,Z)+\sqrt{\varepsilon}\eta^0_2(t;x,Z),$$
$$\widetilde{h^{u,0}_1}(t;Z^u):=h^{u,0}_1(t;Z^u)+\sqrt{\varepsilon}\eta^{u,0}_1(Z^u), \ \ \widetilde{h^{u,0}_2}(t;x,Z^u):=h^{u,0}_2(t;x,Z^u)+\sqrt{\varepsilon}\eta^{u,0}_2(t;x,Z^u),$$
and the approximate solutions can be constructed as
\begin{equation}\label{3.17}
\left \{
\begin{array}{lll}
\tilde{u}^a_1(t;z)&:=u^{0}_1(t;z)+\psi(z)\theta^0_1\left(t;\frac{z}{\sqrt{\varepsilon}}\right)+\psi(1-z)\theta^{u,0}_1\left(t;\frac{1-z}{\sqrt{\varepsilon}}\right),\\
\tilde{u}^a_2(t;x,z)&:=u^{0}_2(t;x,z)+\psi(z)\theta^0_2\left(t;x,\frac{z}{\sqrt{\varepsilon}}\right)+\psi(1-z)\theta^{u,0}_2\left(t;x,\frac{1-z}{\sqrt{\varepsilon}}\right),\\
\tilde{h}^a_1(t;z)&:=H^{0}_1(t;z)+\psi(z)\widetilde{h^0_1}\left(t;\frac{z}{\sqrt{\varepsilon}}\right)+\psi(1-z)\widetilde{h^{u,0}_1}\left(t;\frac{1-z}{\sqrt{\varepsilon}}\right),\\
\tilde{h}^a_2(t;x,z)&:=H^{0}_2(t;x,z)+\psi(z)\widetilde{h^0_2}\left(t;x,\frac{z}{\sqrt{\varepsilon}}\right)
+\psi(1-z)\widetilde{h^{u,0}_2}\left(t;x,\frac{1-z}{\sqrt{\varepsilon}}\right),
\end{array}
\right.
\end{equation}
where $\psi$ is defined as \eqref{3.1} and we still use the $(\tilde{u}^a,\tilde{h}^a)$ to define the new approximate solutions for simplicity.
Then we have
$$(\partial_z \tilde{h}^a_1,\partial_z\tilde{h}^a_2)|_{z=i}=(0,0), \ \ i=0,1.$$
Therefore one can derive the equations for the approximate solutions as
\begin{equation}\label{3.18}
\left \{
\begin{array}{lll}
\partial_t \tilde{u}^a_1-\varepsilon \partial_{zz} \tilde{u}^a_1=f_1+A+B,\\
\partial_t \tilde{u}^a_2-\varepsilon \Delta_{x,z}\tilde{u}^a_2 +\tilde{u}^a_1\partial_x \tilde{u}^a_2-\tilde{h}^a_1\partial_x \tilde{h}^a_2=f_2+C+D_1+E_1,\\
\partial_t \tilde{h}^a_1-\varepsilon \partial_{zz} \tilde{h}^a_1=F_1+G_1+G_2,\\
\partial_t \tilde{h}^a_2-\varepsilon \Delta_{x,z}\tilde{h}^a_2 +\tilde{u}^a_1\partial_x \tilde{h}^a_2-\tilde{h}^a_1\partial_x \tilde{u}^a_2=H+I_1+J_1+J_2,\\
\end{array}
\right.
\end{equation}
with the following initial and boundary conditions
\begin{equation}\label{3.19}
\left \{
\begin{array}{lll}
(\tilde{u}^a,\tilde{h}^a)|_{t=0}=(u_0,H_0),\\
\tilde{u}^a|_{z=i}=\alpha^i(t;x), \ \ i=0,1,\\
(\partial_z \tilde{h}^a_1,\partial_z\tilde{h}^a_2)|_{z=i}=(0,0), \ \ i=0,1.
\end{array}
\right.
\end{equation}
The remainders in \eqref{3.18} are given as
\begin{equation}\label{3.20}
\begin{array}{lll}
D_1=&D-\sqrt{\epsilon}\left[(\psi(z))^2h^0_1\partial_x\eta^0_2+\psi(z)\eta^0_1\partial_x H^0_2+(\psi(z))^2\eta^0_1\partial_xh^0_2\right]\\
&-\sqrt{\epsilon}\big[(\psi(1-z))^2h^{u,0}_1\partial_x\eta^{u,0}_2
+\psi(1-z)\eta^{u,0}_1\partial_x H^0_2\\
&+(\psi(1-z))^2\eta^{u,0}_1\partial_xh^{u,0}_2+\psi(z)H^0_1\partial_x\eta^0_2
+\psi(1-z)H^0_1\partial_x\eta^{u,0}_2\big],
\end{array}
\end{equation}
\begin{equation}\label{3.21}
\begin{array}{lll}
E_1=E-\varepsilon\left[(\psi(z))^2\eta^0_1\partial_x \eta^{0}_2+(\psi(1-z))^2\eta^{u,0}_1\partial_x \eta^{u,0}_2\right],
\end{array}
\end{equation}
\begin{equation}\label{3.22}
\begin{array}{lll}
F_1=F-\sqrt{\varepsilon}\left[\psi(z)\partial_Z^2\eta^0_1+\psi(1-z)\partial_{Z^u}^2\eta^{u,0}_1\right],
\end{array}
\end{equation}
\begin{equation}\label{3.23}
\begin{array}{lll}
G_1=G-2\varepsilon\left[\psi'(z)\partial_Z\eta^0_1-\psi'(1-z)\partial_{Z^u}\eta^{u,0}_1\right],
\end{array}
\end{equation}
\begin{equation}\label{3.24}
\begin{array}{lll}
G_2=-\varepsilon^{\frac{3}{2}}\left[\psi''(z)\eta^0_1+\psi''(1-z)\eta^{u,0}_1\right],
\end{array}
\end{equation}
\begin{equation}\label{3.25}
\begin{array}{lll}
I_1=&I+\sqrt{\varepsilon}\big[\psi(z)\partial_t\eta^0_2+\psi(1-z)\partial_t\eta^{u,0}_2
-\psi(z)\partial_Z^2\eta^0_2-\psi(1-z)\partial_{Z^u}^2\eta^{u,0}_2\\
&+\psi(z)u^0_1\partial_x\eta^0_2+(\psi(z))^2\theta^0_1\partial_x\eta^0_2
-\psi(z)\eta^0_1\partial_xu^0_2-(\psi(z))^2\eta^0_1\partial_x \theta^0_2\\
&+\psi(1-z)u^0_1\partial_x\eta^{u,0}_2+(\psi(1-z))^2\theta^{u,0}_1\partial_x\eta^{u,0}_2\\
&-\psi(1-z)\eta^{u,0}_1\partial_xu^0_2-(\psi(1-z))^2\eta^{u,0}_1\partial_x \theta^{u,0}_2\big],
\end{array}
\end{equation}
\begin{equation}\label{3.26}
\begin{array}{lll}
J_1=&J+\varepsilon\big[-2\psi'(z)\partial_Z\eta^0_2-2\psi''(1-z)\partial_{Z^u}\eta^{u,0}_2\big],
\end{array}
\end{equation}
\begin{equation}\label{3.27}
\begin{array}{lll}
J_2=&-\varepsilon^{\frac{3}{2}}\big[\psi''(z)\eta^0_2+\psi''(1-z)\eta^{u,0}_2+\psi(z)\partial_{xx}\eta^0_2+\psi(1-z)\partial_{xx}\eta^{u,0}_2\big],
\end{array}
\end{equation}
in which $A,B,C,D,E,F,G,H,I,J$ are defined as in (\ref{3.5})--(\ref{3.14}).

The terms of approximate solutions (\ref{3.17}) are determined by the ideal MHD equations (\ref{1.5}), problems (\ref{2.3}) and (\ref{2.4}), therefore the approximate solutions are well-defined.

\section{The convergence rates : Proofs of the Theorem \ref{thm1}}\label{pf}
In this section, we will prove our main theorem. With the arguments stated in the previous sections, we know that the approximate solutions are well-defined.

To obtain the explicit convergence rate between the viscous solutions and the approximate solutions, we introduce the error solutions as follows
$$(u^{err},h^{err}):=(u^\varepsilon-\tilde{u}^a,H^\varepsilon-\tilde{h}^a),$$
then the equations for $(u^{err},h^{err})$ read  as
\begin{equation}\label{4.1}
\left \{
\begin{array}{lll}
\partial_t u^{err}_1-\varepsilon \partial_{zz} u^{err}_1=-(A+B),\\
\partial_t u^{err}_2-\varepsilon\Delta_{x,z}u^{err}_2+u^{err}_1\partial_x \tilde{u}^a_2+u^\varepsilon_1\partial_x u^{err}_2\\
\quad \quad \quad \quad -h^{err}_1\partial_x \tilde{h}^a_2-H^\varepsilon_1\partial_x h^{err}_2=-(C+D_1+E_1),\\
\partial_t h^{err}_1-\varepsilon \partial_{zz} h^{err}_1=-(F_1+G_1+G_2),\\
\partial_t h^{err}_2-\varepsilon\Delta_{x,z}h^{err}_2+u^{err}_1\partial_x \tilde{h}^a_2+u^\varepsilon_1\partial_x h^{err}_2\\
\quad \quad \quad \quad -h^{err}_1\partial_x \tilde{u}^a_2-H^\varepsilon_1\partial_x u^{err}_2=-(H+I_1+J_1+J_2),\\
(u^{err}_1,u^{err}_2, \partial_zh^{err}_1, \partial_zh^{err}_2)|_{z=i}=(0,0,0,0),\ i=0,1,\\
(u^{err}_1,u^{err}_2, h^{err}_1,h^{err}_2)|_{t=0}=(0,0,0,0),
\end{array}
\right.
\end{equation}
where the remainders are defined as in (\ref{3.20})--(\ref{3.27}).

Since the well-posedness of the viscous MHD, ideal MHD are classical, and the approximate solutions are well-defined, therefore we only need to deduce the convergence rates for the error solutions.

Before proving our main result, we introduce the anisotropic Sobolev inequality that will be used in the proof of our main theorems. See \cite{Mazzucato} for instance.
\begin{lemma}\label{lemma4.1}{\bf (\cite{Mazzucato})}
There holds that
\begin{equation}\label{4.4}
\begin{aligned}
\Vert u \Vert_{L^\infty((0,T) \times \Omega)} &\leq C \big(\Vert u \Vert_{L^\infty(0,T;L^2(\Omega))}^{\frac{1}{2}}\Vert \partial_z u \Vert_{L^\infty(0,T;L^2(\Omega))}^{\frac{1}{2}}\\
&\quad +\Vert \partial_z u \Vert_{L^\infty(0,T;L^2(\Omega))}^{\frac{1}{2}}\Vert \partial_x u \Vert_{L^\infty(0,T;L^2(\Omega))}^{\frac{1}{2}} \\
&\quad + \Vert u \Vert_{L^\infty(0,T;L^2(\Omega))}^{\frac{1}{2}}\Vert \partial_x \partial_z u \Vert_{L^\infty(0,T;L^2(\Omega))}^{\frac{1}{2}}\big),
\end{aligned}
\end{equation}
for all $u \in H^1_0(\Omega)$. It is pointed out that the left-hand sides of the inequality could be infinite.
\end{lemma}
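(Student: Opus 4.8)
The plan is to prove \eqref{4.4} by a two-step slicing argument, exactly in the spirit of the classical two-dimensional Agmon/Ladyzhenskaya anisotropic embedding: first bound $u$ in $L^\infty$ in the normal variable $z$ using that $u$ vanishes on $\{z=0\}$, and then control the resulting $x$-dependent slice quantities by a one-dimensional Sobolev estimate on the periodic variable $x$. Because of the plane-parallel structure the functions of interest depend only on $(x,z)$, so it is enough to work on the two-dimensional strip $\mathbb{T}\times[0,1]$ at each fixed time $t$ (integration in $y$ only produces a harmless multiplicative constant), and then take the supremum over $t$ at the very end. If any of the derivative norms on the right-hand side of \eqref{4.4} is infinite there is nothing to prove, so we may assume all of them finite.

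\textbf{Step 1 (slicing in $z$).} Fix $t$ and $x$. Since $u(t;x,0)=0$ for $u\in H^1_0(\Omega)$, the fundamental theorem of calculus and Cauchy--Schwarz give, for every $z\in[0,1]$,
\[
|u(t;x,z)|^2=2\int_0^z u\,\partial_s u\,ds\le 2\int_0^1|u|\,|\partial_z u|\,ds\le 2\,\|u(t;x,\cdot)\|_{L^2_z}\,\|\partial_z u(t;x,\cdot)\|_{L^2_z},
\]
hence $\|u(t;x,\cdot)\|_{L^\infty_z}^2\le 2\,\phi(t;x)^{1/2}\psi(t;x)^{1/2}$, where $\phi(t;x):=\int_0^1 u^2\,dz$ and $\psi(t;x):=\int_0^1(\partial_z u)^2\,dz$.

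\textbf{Step 2 (slicing in $x$).} The maps $x\mapsto\phi(t;x)$ and $x\mapsto\psi(t;x)$ are $L$-periodic and nonnegative, so the elementary inequality $\|g\|_{L^\infty(\mathbb{T})}\le L^{-1}\|g\|_{L^1(\mathbb{T})}+\|g'\|_{L^1(\mathbb{T})}$ (pick $x_\ast$ with $g(x_\ast)=L^{-1}\int_0^L g$ and integrate $g'$) applied to $g=\phi$ and $g=\psi$, together with $\phi'=2\int_0^1 u\,\partial_x u\,dz$, $\psi'=2\int_0^1\partial_z u\,\partial_x\partial_z u\,dz$ and Cauchy--Schwarz in $(x,z)$, yields
\[
\sup_x\phi(t;\cdot)\le L^{-1}\|u(t)\|_{L^2}^2+2\|u(t)\|_{L^2}\|\partial_x u(t)\|_{L^2},\qquad
\sup_x\psi(t;\cdot)\le L^{-1}\|\partial_z u(t)\|_{L^2}^2+2\|\partial_z u(t)\|_{L^2}\|\partial_x\partial_z u(t)\|_{L^2}.
\]

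\textbf{Step 3 (recombination and supremum in $t$).} Combining Steps 1--2 gives $\|u(t)\|_{L^\infty(\Omega)}^2\le 2(\sup_x\phi)^{1/2}(\sup_x\psi)^{1/2}$; applying $(a+b)^{1/2}\le a^{1/2}+b^{1/2}$ to each factor and multiplying out produces exactly four terms, which are constant multiples of $P^2$, $PR$, $PQ$, $QR$, where $P:=\|u(t)\|_{L^2}^{1/2}\|\partial_z u(t)\|_{L^2}^{1/2}$, $Q:=\|\partial_z u(t)\|_{L^2}^{1/2}\|\partial_x u(t)\|_{L^2}^{1/2}$, $R:=\|u(t)\|_{L^2}^{1/2}\|\partial_x\partial_z u(t)\|_{L^2}^{1/2}$; each of these is bounded by $(P+Q+R)^2$ via Young's inequality, so $\|u(t)\|_{L^\infty(\Omega)}\le C(P+Q+R)$, which is \eqref{4.4} at fixed $t$ with $L^2(\Omega)$-norms on the right. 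Finally, since the right-hand side is monotone increasing in each norm, each $\|\cdot\|_{L^2(\Omega)}$ may be replaced by $\|\cdot\|_{L^\infty(0,T;L^2(\Omega))}$ before taking the supremum over $t\in(0,T)$ on the left, giving the stated inequality. There is no essential obstacle in this argument; the only point deserving care is that the one-dimensional Sobolev estimate in $x$ must be applied to the slice quantities $\phi=\int u^2\,dz$ and $\psi=\int(\partial_z u)^2\,dz$ rather than to $u$ directly, after which the recombination of the four product terms into the three-term form of \eqref{4.4} is purely routine.
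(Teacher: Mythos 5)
Your proof is correct and is essentially the standard slicing (Agmon-type) argument behind this anisotropic embedding; the paper itself gives no proof of Lemma \ref{lemma4.1}, simply citing \cite{Mazzucato}, where the same two-step scheme (one-dimensional interpolation in $z$ using the vanishing trace, then a one-dimensional Sobolev bound in $x$ applied to the slice quantities $\int u^2\,dz$ and $\int(\partial_z u)^2\,dz$) is used. The one point worth keeping explicit is the one you already flag: as literally stated for all $u\in H^1_0(\Omega)$ with $\Omega=\mathbb{T}^2\times[0,1]$ the inequality would fail for genuinely $y$-dependent functions (no $y$-derivative appears on the right-hand side), so the restriction to the $y$-independent functions produced by the plane-parallel ansatz is essential, and with that reading your argument is complete.
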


Now we are on a position to prove Theorem \ref{thm1}.

\begin{proof}[Proof of Theorem \ref{thm1}]
We will complete our proof by the following several steps.

\textbf{Step 1: Estimates for $(u^{err}_1,h^{err}_1)$.}

Multiplying (\ref{4.1})$_1$ by $u^{err}_1$, integrating by parts over the $\Omega$, we have
\begin{equation}\label{4.2}
\begin{aligned}
\frac{1}{2}\frac{\mathrm{d}}{\mathrm{d}t}\Vert u^{err}_1\Vert_{L^2(0,1)}^2+\varepsilon\Vert \partial_z u^{err}_1\Vert_{L^2(0,1)}^2=-\int_0^1 (A+B)u^{err}_1\mathrm{d}z.
\end{aligned}
\end{equation}
As an example, one term of the right-hand side can be bounded by
\begin{equation}\nonumber
\begin{aligned}
\left|\int_0^1 \psi'(z)\partial_Z \theta^0_1(t;z/\sqrt{\varepsilon}) u^{err}_1 \right| &\leq \int_{\frac{1}{3}}^{\frac{2}{3}}\left|\partial_Z\theta^0_1 u^{err}_1 \right|\\
&\leq C \varepsilon^{\frac{5}{4}} \Vert u^{err}_1\Vert_{L^2(0,1)}\Vert \langle Z \rangle^2\partial_Z \theta^0_1\Vert_{L^2(0,\infty)},
\end{aligned}
\end{equation}
in which the weighted estimates for boundary layer correctors have been used (see \cite{Mazzucato} for instance) and the limits of integration are due to the support properties of the cut-off function $\psi$. Other terms can be estimated in a similar way. Therefore, we get that
\begin{equation}\label{4.3}
\begin{aligned}
\frac{1}{2}\frac{\mathrm{d}}{\mathrm{d}t}&\Vert u^{err}_1\Vert_{L^2(0,1)}^2+\varepsilon\Vert \partial_z u^{err}_1\Vert_{L^2(0,1)}^2\\
\leq &C \varepsilon^{\frac{7}{4}}\Vert u^{err}_1\Vert_{L^2}\left(\Vert \langle Z \rangle^2\partial_Z \theta^0_1\Vert_{L^2(0,\infty)}+\Vert \langle Z^u \rangle^2\partial_{Z^u} \theta^{u,0}_1\Vert_{L^2(0,\infty)}\right)\\
&+C\varepsilon \Vert u^{err}_1\Vert_{L^2}\left(\Vert  \theta^0_1\Vert_{L^2(0,\infty)}+\Vert \theta^{u,0}_1\Vert_{L^2(0,\infty)}+\Vert u^0_1\Vert_{H^2}\right).
\end{aligned}
\end{equation}
Applying the Cauchy inequality and Gronwall's inequality, we have
\begin{equation}\label{4.4}
\Vert u^{err}_1\Vert_{L^\infty(0,T;L^2(0,1))}+
\sqrt{\varepsilon}\Vert \partial_z u^{err}_1\Vert_{L^2(0,T;L^2(0,1))} \leq C \varepsilon.
\end{equation}

Multiplying (\ref{4.1})$_1$ by $-\partial_{zz} u^{err}_1$, integrating by parts over the $\Omega$ to yield that
\begin{equation}\label{4.5}
\begin{aligned}
\frac{1}{2}\frac{\mathrm{d}}{\mathrm{d}t}&\Vert \partial_z  u^{err}_1\Vert_{L^2(0,1)}^2+\varepsilon\Vert \partial_{zz} u^{err}_1\Vert_{L^2(0,1)}^2\\
\leq &C \varepsilon\Vert \partial_{zz} u^{err}_1\Vert_{L^2}\bigg(\Vert \langle Z \rangle^2\partial_Z \theta^0_1\Vert_{L^2(0,\infty)}+\Vert \langle Z^u \rangle^2\partial_{Z^u} \theta^{u,0}_1\Vert_{L^2(0,\infty)}\\
&+\Vert  \theta^0_1\Vert_{L^2(0,\infty)}+\Vert \theta^{u,0}_1\Vert_{L^2(0,\infty)}+\Vert u^0_1\Vert_{H^2}\bigg),
\end{aligned}
\end{equation}
then one has
\begin{equation}\label{4.6}
\Vert \partial_z u^{err}_1\Vert_{L^\infty(0.T;L^2(0,1))}+
\sqrt{\varepsilon}\Vert \partial_{zz} u^{err}_1\Vert_{L^2(0,T;L^2(0,1))} \leq C \sqrt{\varepsilon},
\end{equation}
where we have used the estimates obtained in Appendix of \cite{Mazzucato}.

Therefore, we have
\begin{equation}\label{4.7}
\Vert  u^{err}_1\Vert_{L^\infty(0,T;L^2(0,1))}\leq C \varepsilon,
\end{equation}
\begin{equation}\label{4.8}
\Vert  u^{err}_1\Vert_{L^\infty(0,T;H^1(0,1))}\leq C \sqrt{\varepsilon},
\end{equation}
\begin{equation}\label{4.9}
\begin{aligned}
\Vert  u^{err}_1\Vert_{L^\infty((0,T)\times (0,1))}\leq &\Vert  u^{err}_1\Vert_{L^\infty(0,T;L^2(0,1))}^{\frac{1}{2}}\Vert  u^{err}_1\Vert_{L^\infty(0,T;H^1(0,1))}^{\frac{1}{2}}\\
\leq &C \varepsilon^{\frac{3}{4}}.
\end{aligned}
\end{equation}

Following the similar arguments for $h^{err}_1$, we have
\begin{equation}\label{4.10}
\Vert  h^{err}_1\Vert_{L^\infty(0,T;L^2(0,1))}\leq C \varepsilon,
\end{equation}
\begin{equation}\label{4.11}
\Vert  h^{err}_1\Vert_{L^\infty(0,T;H^1(0,1))}\leq C \sqrt{\varepsilon},
\end{equation}
\begin{equation}\label{4.12}
\begin{aligned}
\Vert  h^{err}_1\Vert_{L^\infty((0,T)\times (0,1))}\leq C \varepsilon^{\frac{3}{4}}.
\end{aligned}
\end{equation}

\textbf{Step 2: Estimates for $(u^{err}_2,h^{err}_2)$.}

Multiplying (\ref{4.1})$_{2,4}$ by $u^{err}_2,h^{err}_2$, respectively, integrating by parts and adding the results, we get that
\begin{equation}\label{4.13}
\begin{aligned}
\frac{1}{2}\frac{\mathrm{d}}{\mathrm{d}t}\Vert (u^{err}_2,h^{err}_2)\Vert_{L^2}^2&+\varepsilon\Vert \nabla_{x,z}(u^{err}_2,h^{err}_2)\Vert_{L^2}^2=-\int_\Omega u^{err}_1\partial_x\tilde{u}^a_2u^{err}_2\\
&-\int_\Omega h^{err}_1u^{err}_2 \partial_x \tilde{h}^a_2-\int_\Omega (C+D_1+E_1)u^{err}_2\\
&-\int_\Omega u^{err}_1\partial_x \tilde{h}^a_2\cdot h^{err}_2-\int_\Omega h^{err}_1\partial_x \tilde{u}^a_2 \cdot h^{err}_2\\
&-\int_\Omega (H+I_1+J_1+J_2)h^{err}_2:=\sum_{i=1}^{11}I_i.
\end{aligned}
\end{equation}

Every term ($I=1,2,\cdots,10$) can be estimated as follows.
\begin{equation}\label{4.14}
\begin{aligned}
I_1 & \leq \Vert u^{err}_1 \Vert_{L^2}\left(\Vert \partial_{x}u^0_2\Vert_{L^\infty}+\Vert \partial_{x}\theta^0_2\Vert_{L^\infty}+\Vert \partial_{x}\theta^{u,0}_2\Vert_{L^\infty}\right)\Vert u^{err}_2 \Vert_{L^2}\\
&\leq C\varepsilon \left(\Vert \partial_{x}u^0_2\Vert_{L^\infty}+\Vert \partial_{x}\theta^0_2\Vert_{L^\infty}+\Vert \partial_{x}\theta^{u,0}_2\Vert_{L^\infty}\right)\Vert u^{err}_2 \Vert_{L^2},
\end{aligned}
\end{equation}
\begin{equation}\label{4.15}
\begin{aligned}
I_2 & \leq \Vert h^{err}_1 \Vert_{L^2}\left(\Vert \partial_{x}H^0_2\Vert_{L^\infty}+\Vert \partial_{x}h^0_2\Vert_{L^\infty}+\Vert \partial_{x}h^{u,0}_2\Vert_{L^\infty}\right)\Vert u^{err}_2 \Vert_{L^2}\\
&\leq C\varepsilon \left(\Vert \partial_{x}H^0_2\Vert_{L^\infty}+\Vert \partial_{x}h^0_2\Vert_{L^\infty}+\Vert \partial_{x}h^{u,0}_2\Vert_{L^\infty}\right)\Vert u^{err}_2 \Vert_{L^2},
\end{aligned}
\end{equation}
\begin{equation}\label{4.16}
\begin{aligned}
I_3  \leq &C\varepsilon^{\frac{5}{4}}\Vert u^{err}_2\Vert_{L^2}\bigg( \Vert \theta^0_1\Vert_{L^\infty}\Vert \langle Z\rangle^2\partial_x\theta^0_2\Vert_{L^2}+\Vert h^0_1\Vert_{L^\infty}\Vert \langle Z\rangle^2\partial_xh^0_2 \Vert_{L^2}\\
&+\Vert \theta^{u,0}_1\Vert_{L^\infty}\Vert \langle Z^u\rangle^2\partial_x\theta^{u,0}_2\Vert_{L^2}+\Vert h^{u,0}_1\Vert_{L^\infty}\Vert \langle Z^u\rangle^2\partial_xh^{u,0}_2 \Vert_{L^2}\bigg),
\end{aligned}
\end{equation}

\begin{equation}\label{4.17}
\begin{aligned}
I_4 \leq &C\varepsilon^{\frac{3}{4}}\Vert u^{err}_2\Vert_{L^2}\bigg( \Vert \partial_z u^0_1\Vert_{L^\infty}\Vert \langle Z\rangle \partial_x \theta^0_2\Vert_{L^2}+\Vert \langle Z\rangle \theta^0_1\Vert_{L^2}\Vert \partial_{zx}u^0_2\Vert_{L^\infty}\\
&+\Vert \partial_z H^0_1\Vert_{L^\infty}\Vert \langle Z\rangle \partial_xh^0_2\Vert_{L^2}+\Vert \partial_{zx}H^0_2\Vert_{L^\infty}\Vert \langle Z\rangle h^0_1\Vert_{L^2}\\
&+\Vert \partial_z u^0_1\Vert_{L^\infty}\Vert \langle Z^u\rangle \partial_x \theta^{u,0}_2\Vert_{L^2}+\Vert \langle Z^u\rangle \theta^{u,0}_1\Vert_{L^2}\Vert \partial_{zx}u^0_2\Vert_{L^\infty}\\
&+\Vert \partial_z H^0_1\Vert_{L^\infty}\Vert \langle Z^u\rangle \partial_xh^{u,0}_2\Vert_{L^2}+\Vert \partial_{zx}H^0_2\Vert_{L^\infty}\Vert \langle Z^u\rangle h^{u,0}_1\Vert_{L^2}\\
&+\Vert \partial_Z \theta^0_2\Vert_{L^2}+\Vert \partial_{Z^u} \theta^{u,0}_2\Vert_{L^2}+\|h^0_1\|_{L^\infty}\|\partial_x\eta^0_2\|_{L^2}+\|\eta^0_1\|_{L^2}\|\partial_x H^0_2\|_{L^\infty}\\
&+\|\eta^0_1\|_{L^\infty}\|\partial_xh^0_2\|_{L^2}
+\|h^{u,0}_1\|_{L^\infty}\|\partial_x\eta^{u,0}_2\|_{L^2}+\|\eta^{u,0}_1\|_{L^2}\|\partial_x H^0_2\|_{L^\infty}\\
&+\|\eta^{u,0}_1\|_{L^\infty}\|\partial_xh^{u,0}_2\|_{L^2}
+\|H^0_1\|_{L^\infty}\|\partial_x\eta^0_2\|_{L^2}+\|H^0_1\|_{L^\infty}\|\partial_x\eta^{u,0}_2\|_{L^2}\bigg),
\end{aligned}
\end{equation}
\begin{equation}\label{4.18}
\begin{aligned}
I_5  \leq &C\varepsilon\Vert u^{err}_2\Vert_{L^2} \bigg(\Vert u^0_2\Vert_{H^2}+\Vert \partial_{xx}\theta^0_2\Vert_{L^2}+\Vert \partial_{xx}\theta^{u,0}_2\Vert_{L^2}+\Vert \theta^0_2\Vert_{L^2}\\
&+\Vert\theta^{u,0}_2\Vert_{L^2}+\|\eta^0_1\|_{L^\infty}\|\partial_x \eta^{0}_2\|_{L^2}+\|\eta^{u,0}_1\|_{L^\infty}\|\partial_x \eta^{u,0}_2\|_{L^2}\bigg),
\end{aligned}
\end{equation}
\begin{equation}\label{4.19}
\begin{aligned}
I_6 & \leq \Vert u^{err}_1 \Vert_{L^2}\left(\Vert \partial_{x}H^0_2\Vert_{L^\infty}+\Vert \partial_{x}h^0_2\Vert_{L^\infty}+\Vert \partial_{x}h^{u,0}_2\Vert_{L^\infty}\right)\Vert h^{err}_2 \Vert_{L^2}\\
&\leq C\varepsilon \left(\Vert \partial_{x}H^0_2\Vert_{L^\infty}+\Vert \partial_{x}h^0_2\Vert_{L^\infty}+\Vert \partial_{x}h^{u,0}_2\Vert_{L^\infty}\right)\Vert h^{err}_2 \Vert_{L^2},
\end{aligned}
\end{equation}
\begin{equation}\label{4.20}
\begin{aligned}
I_7 & \leq \Vert h^{err}_1 \Vert_{L^2}\left(\Vert \partial_{x}u^0_2\Vert_{L^\infty}+\Vert \partial_{x}\theta^0_2\Vert_{L^\infty}+\Vert \partial_{x}\theta^{u,0}_2\Vert_{L^\infty}\right)\Vert h^{err}_2 \Vert_{L^2}\\
&\leq C\varepsilon \left(\Vert \partial_{x}u^0_2\Vert_{L^\infty}+\Vert \partial_{x}\theta^0_2\Vert_{L^\infty}+\Vert \partial_{x}\theta^{u,0}_2\Vert_{L^\infty}\right)\Vert h^{err}_2 \Vert_{L^2},
\end{aligned}
\end{equation}
\begin{equation}\label{4.21}
\begin{aligned}
I_8  \leq &C\varepsilon^{\frac{5}{4}}\Vert h^{err}_2\Vert_{L^2}\bigg( \Vert \theta^0_1\Vert_{L^\infty}\Vert \langle Z\rangle^2\partial_xh^0_2\Vert_{L^2}+\Vert h^0_1\Vert_{L^\infty}\Vert \langle Z\rangle^2\partial_x\theta^0_2 \Vert_{L^2}\\
&+\Vert \theta^{u,0}_1\Vert_{L^\infty}\Vert \langle Z^u\rangle^2\partial_xh^{u,0}_2\Vert_{L^2}+\Vert h^{u,0}_1\Vert_{L^\infty}\Vert \langle Z^u\rangle^2\partial_x\theta^{u,0}_2 \Vert_{L^2}\bigg),
\end{aligned}
\end{equation}
\begin{equation}\label{4.22}
\begin{aligned}
I_9 \leq &C\varepsilon^{\frac{3}{4}}\Vert h^{err}_2\Vert_{L^2}\bigg(\Vert\partial_z u^0_1\Vert_{L^\infty}\Vert \langle Z\rangle\partial_x h^0_2\Vert_{L^2}+\Vert \langle Z\rangle \theta^0_1\Vert_{L^2}\Vert \partial_{zx}H^0_2\Vert_{L^2}\\
&+\Vert\partial_z H^0_1\Vert_{L^\infty}\Vert \langle Z\rangle \partial_x \theta^0_2\Vert_{L^2}+\Vert\partial_{zx}H^0_2\Vert_{L^\infty}\Vert \langle Z\rangle h^0_1\Vert_{L^2}\\
&+\Vert\partial_z u^0_1\Vert_{L^\infty}\Vert \langle Z^u\rangle \partial_x h^{u,0}_2\Vert_{L^2}+\Vert \langle Z^u\rangle \theta^{u,0}_1\Vert_{L^2}\Vert \partial_{zx}H^0_2\Vert_{L^\infty}\\
&+\Vert \langle Z^u\rangle\partial_x \theta^{u,0}_2\Vert_{L^2}\Vert\partial_z H^0_1\Vert_{L^\infty}+\Vert\partial_{zx}H^0_2\Vert_{L^\infty}\Vert\langle Z^u\rangle\theta^{u,0}_1\Vert_{L^2}\\
&+\Vert\partial_Zh^0_2\Vert_{L^2}+\Vert\partial_{Z^u}h^{u,0}_2\Vert_{L^2}+\|\partial_t\eta^0_2\|_{L^2}+\|\partial_t\eta^{u,0}_2\|_{L^2}
+\|\partial_Z^2\eta^0_2\|_{L^2}\\
&+\|\partial_{Z^u}^2\eta^{u,0}_2\|_{L^2}+\|u^0_1\|_{L^\infty}\|\partial_x\eta^0_2\|_{L^2}+\|\theta^0_1\|_{L^\infty}\|\partial_x\eta^0_2\|_{L^2}\\
&+\|\eta^0_1\|_{L^2}\|\partial_xu^0_2\|_{L^\infty}+\|\eta^0_1\|_{L^\infty}\|\partial_x \theta^0_2\|_{L^2}+\|u^0_1\|_{L^\infty}\|\partial_x\eta^{u,0}_2\|_{L^2}\\
&+\|\theta^{u,0}_1\|_{L^\infty}\|\partial_x\eta^{u,0}_2\|_{L^2}+\|\eta^{u,0}_1\|_{L^2}\|\partial_xu^0_2\|_{L^\infty}+\|\eta^{u,0}_1\|_{L^\infty}\|\partial_x \theta^{u,0}_2\|_{L^2}\bigg),
\end{aligned}
\end{equation}
\begin{equation}\label{4.23}
\begin{aligned}
I_{10}  \leq &C\varepsilon\Vert h^{err}_2\Vert_{L^2} \bigg(\Vert H^0_2\Vert_{H^2}+\Vert \partial_{xx}h^0_2\Vert_{L^2}+\Vert\partial_{xx}h^{u,0}_2\Vert_{L^2}+\Vert h^0_2\Vert_{L^2}\\
&+\Vert h^{u,0}_2\Vert_{L^2}+\|\partial_Z\eta^0_2\|_{L^2}+\|\partial_{Z^u}\eta^{u,0}_2\|_{L^2}
\bigg),
\end{aligned}
\end{equation}
\begin{equation}\label{4.23a}
\begin{aligned}
I_{11}  \leq C\varepsilon^{\frac{3}{2}}\Vert h^{err}_2\Vert_{L^2} \bigg(\|\eta^0_2\|_{L^2}+\|\eta^{u,0}_2\|_{L^2}+\|\partial_{xx}\eta^0_2\|_{L^2}+\|\partial_{xx}\eta^{u,0}_2\|_{L^2}
\bigg).
\end{aligned}
\end{equation}
Putting the above estimates into (\ref{4.13}), applying the Cauchy's inequality and the Gronwall's inequality, we have
\begin{equation}\label{4.24}
\Vert (u^{err}_2,h^{err}_2)\Vert_{L^\infty(0,T;L^2(\Omega))}+\frac{\sqrt{\varepsilon}}{2}\Vert \nabla_{x,z}(u^{err}_2,h^{err}_2)\Vert_{L^2(0,T;L^2(\Omega))} \leq C \varepsilon^{\frac{3}{4}}.
\end{equation}

Multiplying (\ref{4.1})$_{2,4}$ by $-\partial_{xx} u^{err}_2,-\partial_{xx} h^{err}_2$, respectively, integrating on $\Omega$ and adding the results to give that
\begin{equation}\label{4.25}
\begin{aligned}
\frac{1}{2}\frac{\mathrm{d}}{\mathrm{d}t}\Vert \partial_x (u^{err}_2,h^{err}_2)\Vert_{L^2}^2&+\varepsilon\Vert \nabla_{x,z}\partial_x (u^{err}_2,h^{err}_2)\Vert_{L^2}^2=\int_\Omega u^{err}_1\partial_{xx}\tilde{u}^a_2\partial_x u^{err}_2\\
&+\int_\Omega h^{err}_1\partial_x u^{err}_2 \partial_{xx} \tilde{h}^a_2-\int_\Omega \partial_x (C+D_1+E_1)\partial_x u^{err}_2\\
&+\int_\Omega u^{err}_1\partial_{xx} \tilde{h}^a_2\partial_x h^{err}_2+\int_\Omega h^{err}_1\partial_{xx} \tilde{u}^a_2\partial_x h^{err}_2\\
&-\int_\Omega \partial_x (H+I_1+J_1+J_2)\partial_x h^{err}_2:=\sum_{i=1}^{11}M_i.
\end{aligned}
\end{equation}

Every term $M_i$ can be bounded by
\begin{equation}\label{4.26}
\begin{aligned}
M_1 & \leq \Vert u^{err}_1 \Vert_{L^2}\left(\Vert \partial_{xx}u^0_2\Vert_{L^\infty}+\Vert \partial_{xx}\theta^0_2\Vert_{L^\infty}+\Vert \partial_{xx}\theta^{u,0}_2\Vert_{L^\infty}\right)\Vert \partial_x u^{err}_2 \Vert_{L^2}\\
&\leq C\varepsilon \left(\Vert \partial_{xx}u^0_2\Vert_{L^\infty}+\Vert \partial_{xx}\theta^0_2\Vert_{L^\infty}+\Vert \partial_{xx}\theta^{u,0}_2\Vert_{L^\infty}\right)\Vert \partial_x u^{err}_2 \Vert_{L^2},
\end{aligned}
\end{equation}
\begin{equation}\label{4.27}
\begin{aligned}
M_2 & \leq \Vert h^{err}_1 \Vert_{L^2}\left(\Vert \partial_{xx}H^0_2\Vert_{L^\infty}+\Vert \partial_{xx}h^0_2\Vert_{L^\infty}+\Vert \partial_{xx}h^{u,0}_2\Vert_{L^\infty}\right)\Vert \partial_x u^{err}_2 \Vert_{L^2}\\
&\leq C\varepsilon \left(\Vert \partial_{xx}H^0_2\Vert_{L^\infty}+\Vert \partial_{xx}h^0_2\Vert_{L^\infty}+\Vert \partial_{xx}h^{u,0}_2\Vert_{L^\infty}\right)\Vert \partial_x u^{err}_2 \Vert_{L^2},
\end{aligned}
\end{equation}
\begin{equation}\label{4.28}
\begin{aligned}
M_3  \leq &C\varepsilon^{\frac{5}{4}}\Vert \partial_x u^{err}_2\Vert_{L^2}\bigg( \Vert \theta^0_1\Vert_{L^\infty}\Vert \langle Z\rangle^2\partial_{xx}\theta^0_2\Vert_{L^2}+\Vert h^0_1\Vert_{L^\infty}\Vert \langle Z\rangle^2\partial_{xx} h^0_2 \Vert_{L^2}\\
&+\Vert \theta^{u,0}_1\Vert_{L^\infty}\Vert \langle Z^u\rangle^2\partial_{xx}\theta^{u,0}_2\Vert_{L^2}+\Vert h^{u,0}_1\Vert_{L^\infty}\Vert \langle Z^u\rangle^2\partial_{xx} h^{u,0}_2 \Vert_{L^2}\bigg),
\end{aligned}
\end{equation}
\begin{equation}\label{4.29}
\begin{aligned}
M_4 \leq &C\varepsilon^{\frac{3}{4}}\Vert\partial_x u^{err}_2\Vert_{L^2}\bigg( \Vert \partial_z u^0_1\Vert_{L^\infty}\Vert \langle Z\rangle \partial_{xx} \theta^0_2\Vert_{L^2}+\Vert \langle Z\rangle \theta^0_1\Vert_{L^2}\Vert \partial_{zxx}u^0_2\Vert_{L^\infty}\\
&+\Vert \partial_z H^0_1\Vert_{L^\infty}\Vert \langle Z\rangle \partial_{xx} h^0_2\Vert_{L^2}+\Vert \partial_{zxx}H^0_2\Vert_{L^\infty}\Vert \langle Z\rangle h^0_1\Vert_{L^2}\\
&+\Vert \partial_z u^0_1\Vert_{L^\infty}\Vert \langle Z^u\rangle \partial_{xx} \theta^{u,0}_2\Vert_{L^2}+\Vert \langle Z^u\rangle \theta^{u,0}_1\Vert_{L^2}\Vert \partial_{zxx}u^0_2\Vert_{L^\infty}\\
&+\Vert \partial_z H^0_1\Vert_{L^\infty}\Vert \langle Z^u\rangle \partial_{xx} h^{u,0}_2\Vert_{L^2}+\Vert \partial_{zxx}H^0_2\Vert_{L^\infty}\Vert \langle Z^u\rangle h^{u,0}_1\Vert_{L^2}\\
&+\Vert \partial_{xZ} \theta^0_2\Vert_{L^2}+\Vert \partial_{xZ^u} \theta^{u,0}_2\Vert_{L^2}+\|h^0_1\|_{L^\infty}\|\partial_{xx}\eta^0_2\|_{L^2}\\
&+\|\eta^0_1\|_{L^2}\|\partial_{xx} H^0_2\|_{L^\infty}+\|\eta^0_1\|_{L^\infty}\|\partial_{xx}h^0_2\|_{L^2}+\|h^{u,0}_1\|_{L^\infty}\|\partial_{xx}\eta^{u,0}_2\|_{L^2}\\
&+\|\eta^{u,0}_1\|_{L^2}\|\partial_{xx} H^0_2\|_{L^\infty}+\|\eta^{u,0}_1\|_{L^\infty}\|\partial_{xx}h^{u,0}_2\|_{L^2}\\
&++\|H^0_1\|_{L^\infty}\|\partial_{xx}\eta^0_2\|_{L^2}+\|H^0_1\|_{L^\infty}\|\partial_{xx}\eta^{u,0}_2\|_{L^2}
\bigg),
\end{aligned}
\end{equation}
\begin{equation}\label{4.30}
\begin{aligned}
M_5  \leq &C\varepsilon\Vert\partial_x u^{err}_2\Vert_{L^2} \bigg(\Vert u^0_2\Vert_{H^3}+\Vert \partial_{xxx}\theta^0_2\Vert_{L^2}+\Vert \partial_{xxx}\theta^{u,0}_2\Vert_{L^2}+\Vert\partial_x \theta^0_2\Vert_{L^2}\\
&+\Vert\partial_x \theta^{u,0}_2\Vert_{L^2}
+\|\eta^0_1\|_{L^\infty}\|\partial_{xx} \eta^{0}_2\|_{L^2}+\|\eta^{u,0}_1\|_{L^\infty}\|\partial_{xx} \eta^{u,0}_2\|_{L^2}\bigg),
\end{aligned}
\end{equation}
\begin{equation}\label{4.31}
\begin{aligned}
M_6 & \leq \Vert u^{err}_1 \Vert_{L^2}\left(\Vert \partial_{xx}H^0_2\Vert_{L^\infty}+\Vert \partial_{xx}h^0_2\Vert_{L^\infty}+\Vert \partial_{xx}h^{u,0}_2\Vert_{L^\infty}\right)\Vert \partial_x h^{err}_2 \Vert_{L^2}\\
&\leq C\varepsilon \left(\Vert \partial_{xx}H^0_2\Vert_{L^\infty}+\Vert \partial_{xx}h^0_2\Vert_{L^\infty}+\Vert \partial_{xx}h^{u,0}_2\Vert_{L^\infty}\right)\Vert \partial_xh^{err}_2 \Vert_{L^2},
\end{aligned}
\end{equation}
\begin{equation}\label{4.32}
\begin{aligned}
M_7 & \leq \Vert h^{err}_1 \Vert_{L^2}\left(\Vert \partial_{xx}u^0_2\Vert_{L^\infty}+\Vert \partial_{xx}\theta^0_2\Vert_{L^\infty}+\Vert \partial_{xx}\theta^{u,0}_2\Vert_{L^\infty}\right)\Vert \partial_x h^{err}_2 \Vert_{L^2}\\
&\leq C\varepsilon \left(\Vert \partial_{xx}u^0_2\Vert_{L^\infty}+\Vert \partial_{xx}\theta^0_2\Vert_{L^\infty}+\Vert \partial_{xx}\theta^{u,0}_2\Vert_{L^\infty}\right)\Vert \partial_x h^{err}_2 \Vert_{L^2},
\end{aligned}
\end{equation}
\begin{equation}\label{4.33}
\begin{aligned}
M_8  \leq &C\varepsilon^{\frac{5}{4}}\Vert \partial_x h^{err}_2\Vert_{L^2}\bigg( \Vert \theta^0_1\Vert_{L^\infty}\Vert \langle Z\rangle^2\partial_{xx}h^0_2\Vert_{L^2}+\Vert h^0_1\Vert_{L^\infty}\Vert \langle Z\rangle^2\partial_{xx} \theta^0_2 \Vert_{L^2}\\
&+\Vert \theta^{u,0}_1\Vert_{L^\infty}\Vert \langle Z^u\rangle^2\partial_{xx}h^{u,0}_2\Vert_{L^2}+\Vert h^{u,0}_1\Vert_{L^\infty}\Vert \langle Z^u\rangle^2\partial_{xx}\theta^{u,0}_2 \Vert_{L^2}\bigg),
\end{aligned}
\end{equation}
\begin{equation}\label{4.34}
\begin{aligned}
M_9 \leq &C\varepsilon^{\frac{3}{4}}\Vert\partial_x h^{err}_2\Vert_{L^2}\bigg(\Vert\partial_z u^0_1\Vert_{L^\infty}\Vert \langle Z\rangle\partial_{xx} h^0_2\Vert_{L^2}+\Vert \langle Z\rangle \theta^0_1\Vert_{L^2}\Vert \partial_{zxx}H^0_2\Vert_{L^2}\\
&+\Vert\partial_z H^0_1\Vert_{L^\infty}\Vert \langle Z\rangle \partial_{xx} \theta^0_2\Vert_{L^2}+\Vert\partial_{zxx}H^0_2\Vert_{L^\infty}\Vert \langle Z\rangle h^0_1\Vert_{L^2}\\
&+\Vert\partial_z u^0_1\Vert_{L^\infty}\Vert \langle Z^u\rangle \partial_{xx} h^{u,0}_2\Vert_{L^2}+\Vert \langle Z^u\rangle \theta^{u,0}_1\Vert_{L^2}\Vert \partial_{zxx}H^0_2\Vert_{L^\infty}\\
&+\Vert \langle Z^u\rangle\partial_{xx} \theta^{u,0}_2\Vert_{L^2}\Vert\partial_z H^0_1\Vert_{L^\infty}+\Vert\partial_{zxx}H^0_2\Vert_{L^\infty}\Vert\langle Z^u\rangle\theta^{u,0}_1\Vert_{L^2}\\
&+\Vert\partial_{xZ}h^0_2\Vert_{L^2}+\Vert\partial_{xZ^u}h^{u,0}_2\Vert_{L^2}+\|\partial_{tx}\eta^0_2\|_{L^2}+\|\partial_{tx}\eta^{u,0}_2\|_{L^2}
+\|\partial_{xZZ}\eta^0_2\|_{L^2}\\
&+\|\partial_{xZ^uZ^u}\eta^{u,0}_2\|_{L^2}+\|u^0_1\|_{L^\infty}\|\partial_{xx}\eta^0_2\|_{L^2}+\|\theta^0_1\|_{L^\infty}\|\partial_{xx}\eta^0_2\|_{L^2}\\
&+\|\eta^0_1\|_{L^2}\|\partial_{xx}u^0_2\|_{L^\infty}+\|\eta^0_1\|_{L^\infty}\|\partial_{xx} \theta^0_2\|_{L^2}+\|u^0_1\|_{L^\infty}\|\partial_{xx}\eta^{u,0}_2\|_{L^2}\\
&+\|\theta^{u,0}_1\|_{L^\infty}\|\partial_{xx}\eta^{u,0}_2\|_{L^2}+\|\eta^{u,0}_1\|_{L^2}\|\partial_{xx}u^0_2\|_{L^\infty}+\|\eta^{u,0}_1\|_{L^\infty}\|\partial_{xx} \theta^{u,0}_2\|_{L^2}
\bigg),
\end{aligned}
\end{equation}
\begin{equation}\label{4.35}
\begin{aligned}
M_{10}  \leq &C\varepsilon\Vert \partial_x h^{err}_2\Vert_{L^2} \bigg(\Vert H^0_2\Vert_{H^3}+\Vert \partial_{xxx}h^0_2\Vert_{L^2}+\Vert\partial_{xxx}h^{u,0}_2\Vert_{L^2}\\
&+\Vert \partial_x h^0_2\Vert_{L^2}+\Vert \partial_x h^{u,0}_2\Vert_{L^2}
+\|\partial_{xZ}\eta^0_2\|_{L^2}+\|\partial_{xZ^u}\eta^{u,0}_2\|_{L^2}
\bigg),
\end{aligned}
\end{equation}
\begin{equation}\label{4.35a}
\begin{aligned}
M_{11}  \leq C\varepsilon^{\frac{3}{2}}\Vert \partial_x h^{err}_2\Vert_{L^2} \bigg(\|\partial_x\eta^0_2\|_{L^2}+\|\partial_x\eta^{u,0}_2++\|\partial_{xxx}\eta^0_2\|_{L^2}+\|\partial_{xxx}\eta^{u,0}_2\|_{L^2}\|_{L^2}
\bigg).
\end{aligned}
\end{equation}
Putting (\ref{4.26})-(\ref{4.35a}) into (\ref{4.25}), applying the Cauchy's inequality and Gronwall's inequality to yield that
\begin{equation}\label{4.36}
\Vert \partial_x (u^{err}_2,h^{err}_2)\Vert_{L^\infty(0,T;L^2(\Omega))}+\frac{\sqrt{\varepsilon}}{2}\Vert \nabla_{x,z}\partial_x (u^{err}_2,h^{err}_2)\Vert_{L^2(0,T;L^2(\Omega))} \leq C \varepsilon^{\frac{3}{4}}.
\end{equation}

Multiplying (\ref{4.1})$_{2,4}$ by $-\partial_{zz} u^{err}_2,-\partial_{zz} h^{err}_2$, respectively, integrating on $\Omega$ and adding the results to give that
\begin{equation}\label{4.37}
\begin{aligned}
\frac{1}{2}\frac{\mathrm{d}}{\mathrm{d}t}\Vert \partial_z (u^{err}_2,h^{err}_2)\Vert_{L^2}^2&+\varepsilon\Vert \nabla_{x,z}\partial_z (u^{err}_2,h^{err}_2)\Vert_{L^2}^2=\int_\Omega u^{err}_1\partial_x\tilde{u}^a_2\partial_{zz}u^{err}_2\\
&+\int_\Omega h^{err}_1\partial_{zz}u^{err}_2 \partial_x \tilde{h}^a_2+\int_\Omega (C+D_1+E_1)\partial_{zz}u^{err}_2\\
&+\int_\Omega u^{err}_1\partial_x \tilde{h}^a_2\partial_{zz}h^{err}_2+\int_\Omega h^{err}_1\partial_x \tilde{u}^a_2 \partial_{zz}h^{err}_2\\
&+\int_\Omega (H+I_1+J_1+J_2)\partial_{zz}h^{err}_2:=\sum_{i=1}^{11}K_i.
\end{aligned}
\end{equation}

We bound every $K_i(i=1,2,\cdots,10)$ as follows.
\begin{equation}\label{4.38}
\begin{aligned}
K_1 & \leq \Vert u^{err}_1 \Vert_{L^2}\left(\Vert \partial_{x}u^0_2\Vert_{L^\infty}+\Vert \partial_{x}\theta^0_2\Vert_{L^\infty}+\Vert \partial_{x}\theta^{u,0}_2\Vert_{L^\infty}\right)\Vert \partial_{zz}u^{err}_2 \Vert_{L^2}\\
&\leq C\varepsilon \left(\Vert \partial_{x}u^0_2\Vert_{L^\infty}+\Vert \partial_{x}\theta^0_2\Vert_{L^\infty}+\Vert \partial_{x}\theta^{u,0}_2\Vert_{L^\infty}\right)\Vert  \partial_{zz}u^{err}_2 \Vert_{L^2},
\end{aligned}
\end{equation}
\begin{equation}\label{4.39}
\begin{aligned}
K_2 & \leq \Vert h^{err}_1 \Vert_{L^2}\left(\Vert \partial_{x}H^0_2\Vert_{L^\infty}+\Vert \partial_{x}h^0_2\Vert_{L^\infty}+\Vert \partial_{x}h^{u,0}_2\Vert_{L^\infty}\right)\Vert  \partial_{zz}u^{err}_2 \Vert_{L^2}\\
&\leq C\varepsilon \left(\Vert \partial_{x}H^0_2\Vert_{L^\infty}+\Vert \partial_{x}h^0_2\Vert_{L^\infty}+\Vert \partial_{x}h^{u,0}_2\Vert_{L^\infty}\right)\Vert  \partial_{zz}u^{err}_2 \Vert_{L^2},
\end{aligned}
\end{equation}
\begin{equation}\label{4.40}
\begin{aligned}
K_3  \leq &C\varepsilon^{\frac{5}{4}}\Vert \partial_{zz} u^{err}_2\Vert_{L^2}\bigg( \Vert \theta^0_1\Vert_{L^\infty}\Vert \langle Z\rangle^2\partial_x\theta^0_2\Vert_{L^2}+\Vert h^0_1\Vert_{L^\infty}\Vert \langle Z\rangle^2\partial_xh^0_2 \Vert_{L^2}\\
&+\Vert \theta^{u,0}_1\Vert_{L^\infty}\Vert \langle Z^u\rangle^2\partial_x\theta^{u,0}_2\Vert_{L^2}+\Vert h^{u,0}_1\Vert_{L^\infty}\Vert \langle Z^u\rangle^2\partial_xh^{u,0}_2 \Vert_{L^2}\bigg),
\end{aligned}
\end{equation}

\begin{equation}\label{4.41}
\begin{aligned}
K_4 \leq &C\varepsilon^{\frac{3}{4}}\Vert\partial_{zz}u^{err}_2\Vert_{L^2}\bigg( \Vert \partial_z u^0_1\Vert_{L^\infty}\Vert \langle Z\rangle \partial_x \theta^0_2\Vert_{L^2}+\Vert \langle Z\rangle \theta^0_1\Vert_{L^2}\Vert \partial_{zx}u^0_2\Vert_{L^\infty}\\
&+\Vert \partial_z H^0_1\Vert_{L^\infty}\Vert \langle Z\rangle \partial_xh^0_2\Vert_{L^2}+\Vert \partial_{zx}H^0_2\Vert_{L^\infty}\Vert \langle Z\rangle h^0_1\Vert_{L^2}\\
&+\Vert \partial_z u^0_1\Vert_{L^\infty}\Vert \langle Z^u\rangle \partial_x \theta^{u,0}_2\Vert_{L^2}+\Vert \langle Z^u\rangle \theta^{u,0}_1\Vert_{L^2}\Vert \partial_{zx}u^0_2\Vert_{L^\infty}\\
&+\Vert \partial_z H^0_1\Vert_{L^\infty}\Vert \langle Z^u\rangle \partial_xh^{u,0}_2\Vert_{L^2}+\Vert \partial_{zx}H^0_2\Vert_{L^\infty}\Vert \langle Z^u\rangle h^{u,0}_1\Vert_{L^2}\\
&+\Vert \partial_Z \theta^0_2\Vert_{L^2}+\Vert \partial_{Z^u} \theta^{u,0}_2\Vert_{L^2}+\|h^0_1\|_{L^\infty}\|\partial_x\eta^0_2\|_{L^2}+\|\eta^0_1\|_{L^2}\|\partial_x H^0_2\|_{L^\infty}\\
&+\|\eta^0_1\|_{L^\infty}\|\partial_xh^0_2\|_{L^2}
+\|h^{u,0}_1\|_{L^\infty}\|\partial_x\eta^{u,0}_2\|_{L^2}+\|\eta^{u,0}_1\|_{L^2}\|\partial_x H^0_2\|_{L^\infty}\\
&+\|\eta^{u,0}_1\|_{L^\infty}\|\partial_xh^{u,0}_2\|_{L^2}
+\|H^0_1\|_{L^\infty}\|\partial_x\eta^0_2\|_{L^2}+\|H^0_1\|_{L^\infty}\|\partial_x\eta^{u,0}_2\|_{L^2}\bigg),
\end{aligned}
\end{equation}
\begin{equation}\label{4.42}
\begin{aligned}
K_5  \leq &C\varepsilon\Vert \partial_{zz}u^{err}_2\Vert_{L^2} \bigg(\Vert u^0_2\Vert_{H^2}+\Vert \partial_{xx}\theta^0_2\Vert_{L^2}+\Vert \partial_{xx}\theta^{u,0}_2\Vert_{L^2}+\Vert \theta^0_2\Vert_{L^2}\\
&+\Vert\theta^{u,0}_2\Vert_{L^2}
+\Vert\theta^{u,0}_2\Vert_{L^2}+\|\eta^0_1\|_{L^\infty}\|\partial_x \eta^{0}_2\|_{L^2}+\|\eta^{u,0}_1\|_{L^\infty}\|\partial_x \eta^{u,0}_2\|_{L^2}\bigg),
\end{aligned}
\end{equation}
\begin{equation}\label{4.43}
\begin{aligned}
K_6 & \leq \Vert u^{err}_1 \Vert_{L^2}\left(\Vert \partial_{x}H^0_2\Vert_{L^\infty}+\Vert \partial_{x}h^0_2\Vert_{L^\infty}+\Vert \partial_{x}h^{u,0}_2\Vert_{L^\infty}\right)\Vert h^{err}_2 \Vert_{L^2}\\
&\leq C\varepsilon \left(\Vert \partial_{x}H^0_2\Vert_{L^\infty}+\Vert \partial_{x}h^0_2\Vert_{L^\infty}+\Vert \partial_{x}h^{u,0}_2\Vert_{L^\infty}\right)\Vert  \partial_{zz}h^{err}_2 \Vert_{L^2},
\end{aligned}
\end{equation}
\begin{equation}\label{4.44}
\begin{aligned}
K_7 & \leq \Vert h^{err}_1 \Vert_{L^2}\left(\Vert \partial_{x}u^0_2\Vert_{L^\infty}+\Vert \partial_{x}\theta^0_2\Vert_{L^\infty}+\Vert \partial_{x}\theta^{u,0}_2\Vert_{L^\infty}\right)\Vert h^{err}_2 \Vert_{L^2}\\
&\leq C\varepsilon \left(\Vert \partial_{x}u^0_2\Vert_{L^\infty}+\Vert \partial_{x}\theta^0_2\Vert_{L^\infty}+\Vert \partial_{x}\theta^{u,0}_2\Vert_{L^\infty}\right)\Vert  \partial_{zz}h^{err}_2 \Vert_{L^2},
\end{aligned}
\end{equation}
\begin{equation}\label{4.45}
\begin{aligned}
K_8  \leq &C\varepsilon^{\frac{5}{4}}\Vert \partial_{zz} h^{err}_2\Vert_{L^2}\bigg( \Vert \theta^0_1\Vert_{L^\infty}\Vert \langle Z\rangle^2\partial_xh^0_2\Vert_{L^2}+\Vert h^0_1\Vert_{L^\infty}\Vert \langle Z\rangle^2\partial_x\theta^0_2 \Vert_{L^2}\\
&+\Vert \theta^{u,0}_1\Vert_{L^\infty}\Vert \langle Z^u\rangle^2\partial_xh^{u,0}_2\Vert_{L^2}+\Vert h^{u,0}_1\Vert_{L^\infty}\Vert \langle Z^u\rangle^2\partial_x\theta^{u,0}_2 \Vert_{L^2}\bigg),
\end{aligned}
\end{equation}
\begin{equation}\label{4.46}
\begin{aligned}
K_9 \leq &C\varepsilon^{\frac{3}{4}}\Vert \partial_{zz}h^{err}_2\Vert_{L^2}\bigg(\Vert\partial_z u^0_1\Vert_{L^\infty}\Vert \langle Z\rangle\partial_x h^0_2\Vert_{L^2}+\Vert \langle Z\rangle \theta^0_1\Vert_{L^2}\Vert \partial_{zx}H^0_2\Vert_{L^2}\\
&+\Vert\partial_z H^0_1\Vert_{L^\infty}\Vert \langle Z\rangle \partial_x \theta^0_2\Vert_{L^2}+\Vert\partial_{zx}H^0_2\Vert_{L^\infty}\Vert \langle Z\rangle h^0_1\Vert_{L^2}\\
&+\Vert\partial_z u^0_1\Vert_{L^\infty}\Vert \langle Z^u\rangle \partial_x h^{u,0}_2\Vert_{L^2}+\Vert \langle Z^u\rangle \theta^{u,0}_1\Vert_{L^2}\Vert \partial_{zx}H^0_2\Vert_{L^\infty}\\
&+\Vert \langle Z^u\rangle\partial_x \theta^{u,0}_2\Vert_{L^2}\Vert\partial_z H^0_1\Vert_{L^\infty}+\Vert\partial_{zx}H^0_2\Vert_{L^\infty}\Vert\langle Z^u\rangle\theta^{u,0}_1\Vert_{L^2}\\
&+\Vert\partial_Zh^0_2\Vert_{L^2}+\Vert\partial_{Z^u}h^{u,0}_2\Vert_{L^2}+\|\partial_t\eta^0_2\|_{L^2}+\|\partial_t\eta^{u,0}_2\|_{L^2}
+\|\partial_Z^2\eta^0_2\|_{L^2}\\
&+\|\partial_{Z^u}^2\eta^{u,0}_2\|_{L^2}+\|u^0_1\|_{L^\infty}\|\partial_x\eta^0_2\|_{L^2}+\|\theta^0_1\|_{L^\infty}\|\partial_x\eta^0_2\|_{L^2}\\
&+\|\eta^0_1\|_{L^2}\|\partial_xu^0_2\|_{L^\infty}+\|\eta^0_1\|_{L^\infty}\|\partial_x \theta^0_2\|_{L^2}+\|u^0_1\|_{L^\infty}\|\partial_x\eta^{u,0}_2\|_{L^2}\\
&+\|\theta^{u,0}_1\|_{L^\infty}\|\partial_x\eta^{u,0}_2\|_{L^2}+\|\eta^{u,0}_1\|_{L^2}\|\partial_xu^0_2\|_{L^\infty}+\|\eta^{u,0}_1\|_{L^\infty}\|\partial_x \theta^{u,0}_2\|_{L^2}\bigg),
\end{aligned}
\end{equation}
\begin{equation}\label{4.47}
\begin{aligned}
K_{10}  \leq &C\varepsilon\Vert  \partial_{zz}h^{err}_2\Vert_{L^2} \bigg(\Vert H^0_2\Vert_{H^2}+\Vert \partial_{xx}h^0_2\Vert_{L^2}
+\Vert\partial_{xx}h^{u,0}_2\Vert_{L^2}+\Vert h^0_2\Vert_{L^2}\\
&+\Vert h^{u,0}_2\Vert_{L^2}+\|\partial_Z\eta^0_2\|_{L^2}+\|\partial_{Z^u}\eta^{u,0}_2\|_{L^2}
\bigg),
\end{aligned}
\end{equation}
\begin{equation}\label{4.47a}
\begin{aligned}
K_{11}  \leq C\varepsilon^{\frac{3}{2}}\Vert\partial_{zz} h^{err}_2\Vert_{L^2} \bigg(\|\eta^0_2\|_{L^2}+\|\eta^{u,0}_2\|_{L^2}+\|\partial_{xx}\eta^0_2\|_{L^2}
+\|\partial_{xx}\eta^{u,0}_2\|_{L^2}
\bigg).
\end{aligned}
\end{equation}
Putting (\ref{4.38})-(\ref{4.47a}) into (\ref{4.37}), using the Cauchy's inequality and Gronwall's inequality to get that
\begin{equation}\label{4.48}
\Vert \partial_z (u^{err}_2,h^{err}_2)\Vert_{L^\infty(0,T;L^2(\Omega))}+\frac{\sqrt{\varepsilon}}{2}\Vert \nabla_{x,z}\partial_z (u^{err}_2,h^{err}_2)\Vert_{L^2(0,T;L^2(\Omega))} \leq C \varepsilon^{\frac{1}{4}}.
\end{equation}

It should be pointed out that the above bounds of order $\varepsilon^{\frac{1}{4}}$ can not be improved since we can not using integration by parts in the right-hand side involving second or mixed derivatives in $z$, as $\partial_z u^{err}_2$ may not vanish on the boundaries. In addition, although one can apply the integration by parts in $z$ for the magnetic fields, the convergence rates can not be improved due to the loss of $\sqrt{\varepsilon}$ resulted from the derivatives in $z$ for the remainders, which will give the same convergence rates.

Following the similar arguments, we also have
\begin{equation}\label{4.49}
\begin{aligned}
\Vert \partial_{xx} (u^{err}_2,h^{err}_2) \Vert_{L^\infty(0,T;L^2(\Omega))} \leq C \varepsilon^{\frac{3}{4}}
\end{aligned}
\end{equation}
and
\begin{equation}\label{4.50}
\begin{aligned}
\Vert \partial_{z} \partial_x (u^{err}_2,h^{err}_2) \Vert_{L^\infty(0,T;L^2(\Omega))}\leq C \varepsilon^{\frac{1}{4}}.
\end{aligned}
\end{equation}

Therefore, we deduce that
\begin{equation}\label{4.51}
\begin{aligned}
\Vert (u^{err}_2,h^{err}_2) \Vert_{L^\infty(0,T;H^1(\Omega))} \leq C\varepsilon^{\frac{1}{4}}.
\end{aligned}
\end{equation}

Finally, we use the Lemma \ref{lemma4.1} to get
\begin{equation}\label{4.52}
\begin{aligned}
\Vert u^{err}_2 \Vert_{L^\infty((0,T) \times \Omega)} &\leq C \bigg(\Vert u^{err}_2 \Vert_{L^\infty(0,T;L^2(\Omega))}^{\frac{1}{2}}\Vert \partial_z u^{err}_2 \Vert_{L^\infty(0,T;L^2(\Omega))}^{\frac{1}{2}}\\
&\quad +\Vert \partial_z u^{err}_2 \Vert_{L^\infty(0,T;L^2(\Omega))}^{\frac{1}{2}}\Vert \partial_x u^{err}_2 \Vert_{L^\infty(0,T;L^2(\Omega))}^{\frac{1}{2}} \\
&\quad + \Vert u^{err}_2 \Vert_{L^\infty(0,T;L^2(\Omega))}^{\frac{1}{2}}\Vert \partial_x \partial_z u^{err}_2 \Vert_{L^\infty(0,T;L^2(\Omega))}^{\frac{1}{2}}\bigg)\\
&\leq C\sqrt{\varepsilon}.
\end{aligned}
\end{equation}
To obtain the $L^\infty$ estimates for $h^{err}_2$, we will replace the Lemma \ref{lemma4.1} by the following inequality (see also Lemma 3.6 in \cite{Xin1})
$$\|u(x,z)\|_{L^\infty_{x,z}}\lesssim\|u\|_{L^2_xL^2_z}^{\frac{1}{4}}\|u\|_{H^1_xL^2_z}^{\frac{1}{4}}\|u\|_{L^2_xH^1_z}^{\frac{1}{4}}
\|u\|_{H^1_xH^1_z}^{\frac{1}{4}}.$$
Therefore, one has
\begin{equation}\label{4.53}
\begin{aligned}
\Vert h^{err}_2 \Vert_{L^\infty((0,T) \times \Omega)}\leq C\sqrt{\varepsilon}.
\end{aligned}
\end{equation}

Combine the above steps, we have
\begin{equation}\label{4.56}
\begin{aligned}
\Vert (u^{err},h^{err}) \Vert_{L^\infty(0,T;L^2(\Omega))} &\leq C\varepsilon^\frac{3}{4},\\
\Vert (u^{err},h^{err}) \Vert_{L^\infty(0,T;H^1(\Omega))} &\leq C\varepsilon^\frac{1}{4},\\
\Vert (u^{err},h^{err}) \Vert_{L^\infty((0,T) \times \Omega)}& \leq C\sqrt{\varepsilon},
\end{aligned}
\end{equation}
which completes the proof of Theorem \ref{thm1}.
\end{proof}

We end this section with the following corollary.
\begin{corollary}\label{cor1}
Under the assumptions of Theorem \ref{thm1}, the following optimal convergence rate holds
\begin{equation}\label{1.77}
\begin{array}{lll}
C \varepsilon^{\frac{1}{4}} \leq \Vert (u^\varepsilon-u^0,H^\varepsilon-H^0) \Vert_{L^\infty(0,T;L^2(\Omega))} \leq C \varepsilon^{\frac{1}{4}},
\end{array}
\end{equation}
where $(u^0,H^0)$ is the solution of Problem (\ref{1.5})-(\ref{1.6}) and the constants $C>0$ are independent of $\varepsilon$.
\end{corollary}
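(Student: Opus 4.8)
The plan is to combine the convergence estimate \eqref{1.73} of Theorem \ref{thm1} with a direct computation of the $L^2$-size of the boundary layer part of the approximate solution. Writing
$$
(u^\varepsilon-u^0,H^\varepsilon-H^0)=(u^\varepsilon-\tilde u^a,H^\varepsilon-\tilde h^a)+(\tilde u^a-u^0,\tilde h^a-H^0),
$$
the triangle inequality reduces the corollary to showing
$$
c\,\varepsilon^{1/4}\le \|(\tilde u^a-u^0,\tilde h^a-H^0)\|_{L^\infty(0,T;L^2(\Omega))}\le C\,\varepsilon^{1/4}
$$
for constants $0<c\le C$ independent of $\varepsilon$, since \eqref{1.73} already yields $\|(u^\varepsilon-\tilde u^a,H^\varepsilon-\tilde h^a)\|_{L^\infty(0,T;L^2(\Omega))}\le C\varepsilon^{3/4}=o(\varepsilon^{1/4})$.

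For the upper bound, the key point is that by \eqref{3.17} the difference $(\tilde u^a-u^0,\tilde h^a-H^0)$ is a finite sum of terms of the form $\psi(z)\,\phi(t;x,z/\sqrt\varepsilon)$ and $\sqrt\varepsilon\,\psi(z)\,\eta(t;x,z/\sqrt\varepsilon)$ together with their mirror images at $z=1$, where $\phi$ ranges over the correctors $\theta^0_i,h^0_i,\theta^{u,0}_i,h^{u,0}_i$ and $\eta$ over the boundary correctors. Changing variables $Z=z/\sqrt\varepsilon$ gives
$$
\int_0^1\big|\psi(z)\,\phi(t;x,z/\sqrt\varepsilon)\big|^2\,dz=\sqrt\varepsilon\int_0^{1/\sqrt\varepsilon}\big|\psi(\sqrt\varepsilon Z)\,\phi(t;x,Z)\big|^2\,dZ\le\sqrt\varepsilon\,\|\phi(t)\|_{L^2}^2,
$$
and the weighted estimates for the correctors quoted at the end of Section \ref{boundarylayerequ} (following \cite{Mazzucato,Xin1}) bound $\|\phi\|_{L^\infty(0,T;L^2)}$ and $\|\eta\|_{L^\infty(0,T;L^2)}$ uniformly in $\varepsilon$. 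Hence each $\psi\phi$ term contributes $O(\varepsilon^{1/4})$ and each $\sqrt\varepsilon\,\psi\eta$ term $O(\varepsilon^{3/4})$, so $\|(\tilde u^a-u^0,\tilde h^a-H^0)\|_{L^\infty(0,T;L^2(\Omega))}\le C\varepsilon^{1/4}$; together with \eqref{1.73} this gives the right inequality in \eqref{1.77}.

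For the lower bound I would estimate a single scalar component from below. Since $\psi\equiv1$ on $[0,\tfrac13]$ and $\psi(1-z)$ vanishes there, on $[0,\tfrac13]$ one has $\tilde u^a_1-u^0_1=\theta^0_1(t;z/\sqrt\varepsilon)$, hence
$$
\|\tilde u^a_1-u^0_1\|_{L^2(0,1)}^2\ge\int_0^{1/3}\big|\theta^0_1(t;z/\sqrt\varepsilon)\big|^2\,dz=\sqrt\varepsilon\int_0^{1/(3\sqrt\varepsilon)}\big|\theta^0_1(t;Z)\big|^2\,dZ;
$$
more generally the identity $\psi(z)\psi(1-z)\equiv0$ guarantees that the lower and upper correctors cannot cancel. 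Pick $t_0\in(0,T]$ with $\theta^0_1(t_0;\cdot)\not\equiv0$ in $L^2(0,\infty)$; such $t_0$ exists whenever the prescribed mismatch $\alpha^0_1(t)-u^0_1(t;0)$ is not identically zero, because otherwise the Duhamel representation of the solution of the heat problem in \eqref{2.3} would force this trace to vanish for all $t$. Then, by monotone convergence, $\int_0^{1/(3\sqrt\varepsilon)}|\theta^0_1(t_0;Z)|^2\,dZ\ge\tfrac12\|\theta^0_1(t_0)\|_{L^2(0,\infty)}^2$ for $\varepsilon$ small, so $\|\tilde u^a(t_0)-u^0(t_0)\|_{L^2(\Omega)}\ge c_0\varepsilon^{1/4}$, and with \eqref{1.73},
$$
\|(u^\varepsilon-u^0,H^\varepsilon-H^0)\|_{L^\infty(0,T;L^2(\Omega))}\ge c_0\varepsilon^{1/4}-C\varepsilon^{3/4}\ge\tfrac{c_0}{2}\varepsilon^{1/4}
$$
for $\varepsilon$ sufficiently small, which is the left inequality in \eqref{1.77}.

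The main obstacle is the lower bound: one must rule out the degenerate situation in which all correctors vanish identically (in which case the convergence would in fact be faster), and one has to treat the cut-off $\psi$ with care so that neither the region where $\psi<1$ nor possible cancellations between the two boundary layers dilute the exponent $\varepsilon^{1/4}$ --- here the disjoint-support property $\psi(z)\psi(1-z)=0$ together with the decay of the correctors is precisely what makes the argument go through. If the hypotheses do not exclude the trivial case, the statement should be read with the constant $c$ depending on the fixed data and being positive exactly when some corrector is nonzero.
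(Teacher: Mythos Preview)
Your approach is essentially the same as the paper's: both combine the $\varepsilon^{3/4}$ error estimate \eqref{1.73} with the observation that the boundary-layer correctors have $L^2(\Omega)$-norm of exact order $\varepsilon^{1/4}$, obtained via the change of variables $Z=z/\sqrt\varepsilon$. The paper's proof is a one-line appeal to $\|(\theta^0,\theta^{u,0},h^0,h^{u,0},\widetilde{h^0},\widetilde{h^{u,0}})\|_{L^\infty(0,T;L^2)}\approx\varepsilon^{1/4}$, whereas you have spelled out the triangle-inequality decomposition, the change-of-variables computation, and the lower-bound argument (using $\psi\equiv1$ on $[0,\tfrac13]$ and the disjoint-support property $\psi(z)\psi(1-z)=0$) in detail; your caveat about the degenerate case where all correctors vanish is a valid refinement that the paper leaves implicit.
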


\begin{proof}
The results of Corollary \ref{cor1} is straightforward from the fact that
$$\Vert (\theta^0,\theta^{u,0},h^0,h^{u,0},\widetilde{h^0},\widetilde{h^{u,0}}) \Vert_{L^\infty (0,T;L^2(\Omega_\infty))} \thickapprox \varepsilon^{\frac{1}{4}},$$
where the boundary layer correctors $\theta^0,\theta^{u,0},h^0,h^{u,0},\widetilde{h^0},\widetilde{h^{u,0}}$ are determined in Section \ref{approximate} and $\Omega_\infty=[0,L]\times [0,\infty)$.
\end{proof}

\section{The case with uniform magnetic background: stabilizing effects of magnetic fields}\label{stabilityeffect}
Based on the results obtained in Section \ref{pf}, we will study the stabilizing effects of magnetic fields. Precisely, we will study the case that the perfectly conducting wall condition for magnetic fields replaced by uniform magnetic background, i.e., the (nonhomogeneous) Dirichlet boundary condition, see \cite{Davidson,G-P,Gilbert,CLiu2,CLiu3} and the references therein. The boundary conditions and initial data in this case are stated as follows
\begin{equation}\label{1.3a}
\left \{
\begin{array}{lll}
u^\varepsilon|_{z=i}=\alpha^i(t;x), \ \ \alpha^i(t;x)=(\alpha^i_1(t),\alpha^i_2(t;x),0), \ \ i=0,1,\\
H^\varepsilon|_{z=i}=\gamma^i(t;x), \ \ \gamma^i(t;x)=(\gamma^i_1(t),\gamma^i_2(t;x),0), \ \ i=0,1,\\
(u^\varepsilon,H^\varepsilon)|_{t=0}=(u_0,H_0),\ u_0=(a(z),b(x,z),0),\ H_0=(c(z),d(x,z),0).
\end{array}
\right.
\end{equation}
With the boundary conditions, the zero-order and first-order compatibility conditions would be given as follows respectively:
\begin{equation}\label{1.4a0}
\left \{
\begin{array}{lll}
\alpha^i(0;x)=u_0(x,i), \ \ i=0,1,\\
\gamma^i(0;x)=H_0(x,i), \ \ i=0,1,\\
\end{array}
\right.
\end{equation}
and
\begin{equation}\label{1.4b0}
\left \{
\begin{array}{lll}
\partial_t \alpha^i_1(0)-\varepsilon \partial_{zz}a(i)=f_1(0;i),\\
\partial_t \alpha^i_2(0;x)-\varepsilon \Delta_{x,z}b(x,i)+a(i)\partial_x b(x,i)-c(i)\partial_x d(x,i)=f_2(0;x,i),\\
\partial_t \gamma^i_1(0)-\varepsilon \partial_{zz}c(i)=0,\\
\partial_t \gamma^i_2(0;x)-\varepsilon \Delta_{x,z}d(x,i)+a(i)\partial_x d(x,i)-c(i)\partial_x b(x,i)=0,
\end{array}
\right.
\end{equation}
for $i=0,1$.

It is well known that the no-slip type boundary conditions will result in the strong boundary layer. However, in the case with uniform magnetic background, compared with Theorem \ref{thm1}, the convergence rates do not be worse, whose reasons are mainly resulted from the stabilizing effects of the magnetic fields and the structure of the plane MHD flow.

Following the arguments as in Section \ref{boundarylayerequ}, we assume that the viscous MHD solutions are well approximated by
\begin{equation}\label{appnoslip}
\left \{
\begin{array}{lll}
u^a_1(t;z)&:=u^{ou}_1(t;z)+\theta^0_1\left(t;\frac{z}{\sqrt{\varepsilon}}\right)+\theta^{u,0}_1\left(t;\frac{1-z}{\sqrt{\varepsilon}}\right),\\
u^a_2(t;x,z)&:=u^{ou}_2(t;x,z)+\theta^0_2\left(t;x,\frac{z}{\sqrt{\varepsilon}}\right)+\theta^{u,0}_2\left(t;x,\frac{1-z}{\sqrt{\varepsilon}}\right),\\
h^a_1(t;z)&:=H^{ou}_1(t;z)+h^0_1\left(t;\frac{z}{\sqrt{\varepsilon}}\right)+h^{u,0}_1\left(t;\frac{1-z}{\sqrt{\varepsilon}}\right),\\
h^a_2(t;x,z)&:=H^{ou}_2(t;x,z)+h^0_2\left(t;x,\frac{z}{\sqrt{\varepsilon}}\right)+h^{u,0}_2\left(t;x,\frac{1-z}{\sqrt{\varepsilon}}\right),
\end{array}
\right.
\end{equation}
where the correctors satisfy that
\begin{equation}\label{appnoslip1}
(\theta^0_i,h^0_i) \to 0 \ \ \mathrm{as} \ Z \to \infty; \ \ \ (\theta^{u,0}_i,h^{u,0}_i) \to 0 \ \ \mathrm{as} \ Z^u \to \infty,
\end{equation}
in which $i=1,2, Z=z/\sqrt{\varepsilon}$ and $Z^u=(1-z)/\sqrt{\varepsilon}$.

Every part in the approximate solutions satisfy the following problems:

\textbf{(I) The outer solutions $(u^{ou},H^{ou})$.}

The outer solutions $(u^{ou},H^{ou})$ satisfy the ideal MHD equations (\ref{1.5}) with the initial data
\begin{equation}\label{2.2}
(u^0,H^0)|_{t=0}=(u_0,H_0).
\end{equation}
The uniqueness of the solutions to the system implies that $(u^{ou},H^{ou})\equiv (u^0,H^0)$.

\textbf{(II) The lower correctors $(\theta^0_1,\theta^0_2,h^0_1,h^0_2)$.}

The lower correctors $(\theta^0_1,\theta^0_2,h^0_1,h^0_2)$ satisfy that
\begin{equation}\label{appnoslip2}
\left \{
\begin{array}{lll}
\partial_t \theta^0_1-\partial_{ZZ}\theta^0_1=0,\\
\partial_t \theta^0_2-\partial_{ZZ}\theta^0_2+(u^0_1(t;0)+\theta^0_1)\partial_x \theta^0_2+\theta^0_1\partial_x u^0_2(t;x,0)\\
\quad\quad\quad  -(H^0_1(t;0)+h^0_1)\partial_x h^0_2-h^0_1\partial_x H^0_2(t;x,0)=0,\\
\partial_t h^0_1-\partial_{ZZ}h^0_1=0,\\
\partial_t h^0_2-\partial_{ZZ}h^0_2+(u^0_1(t;0)+\theta^0_1)\partial_x h^0_2+\theta^0_1\partial_x H^0_2(t;x,0)\\
\quad\quad\quad  -(H^0_1(t;0)+h^0_1)\partial_x \theta^0_2-h^0_1\partial_x u^0_2(t;x,0)=0,\\
(\theta^0_1,\theta^0_2)|_{Z=0}=(\alpha^0_1(t)-u^0_1(t;0),\alpha^0_2(t;x)-u^0_2(t;x,0)),\\
(h^0_1,  h^0_2)|_{Z=0}=(\gamma^0_1(t)-H^0_1(0),\gamma^0_2(t;x)-H^0_2(t;x,0)),\\
(\theta^0_1,\theta^0_2, h^0_1,h^0_2)|_{Z=\infty}=(0,0,0,0),\\
(\theta^0_1,\theta^0_2, h^0_1,h^0_2)|_{t=0}=(0,0,0,0).
\end{array}
\right.
\end{equation}

\textbf{(III) The upper correctors $(\theta^{u,0}_1,\theta^{u,0}_2,h^{u,0}_1,h^{u,0}_2)$.}

The lower correctors $(\theta^{u,0}_1,\theta^{u,0}_2,h^{u,0}_1,h^{u,0}_2)$ satisfy that
\begin{equation}\label{appnoslip3}
\left \{
\begin{array}{lll}
\partial_t \theta^{u,0}_1-\partial_{Z^uZ^u}\theta^{u,0}_1=0,\\
\partial_t \theta^{u,0}_2-\partial_{Z^uZ^u}\theta^{u,0}_2+(u^0_1(t;1)+\theta^{u,0}_1)\partial_x \theta^{u,0}_2+\theta^{u,0}_1\partial_x u^0_2(t;x,1)\\
\quad\quad\quad  -(H^0_1(t;1)+h^{u,0}_1)\partial_x h^{u,0}_2-h^{u,0}_1\partial_x h^{u,0}_2(t;x,1)=0,\\
\partial_t h^{u,0}_1-\partial_{Z^uZ^u}h^{u,0}_1=0,\\
\partial_t h^{u,0}_2-\partial_{Z^uZ^u}h^{u,0}_2+(u^0_1(t;1)+\theta^{u,0}_1)\partial_x h^{u,0}_2+\theta^{u,0}_1\partial_x h^{u,0}_2(t;x,1)\\
\quad\quad\quad  -(H^0_1(t;1)+h^{u,0}_1)\partial_x \theta^{u,0}_2-h^{u,0}_1\partial_x u^0_2(t;x,1)=0,\\
(\theta^{u,0}_1,\theta^{u,0}_2)|_{Z^u=0}=(\alpha^1_1(t)-u^1_1(t;1),\alpha^1_2(t;x)-u^0_2(t;x,1)),\\
(h^{u,0}_1, h^{u,0}_2)|_{Z^u=0}=(\gamma^1_1(t)-H^0_1(1),\gamma^1_2(t;x)-h^0_2(t;x,1)),\\
(\theta^{u,0}_1,\theta^{u,0}_2, h^{u,0}_1,h^{u,0}_2)|_{Z^u=\infty}=(0,0,0,0),\\
(\theta^{u,0}_1,\theta^{u,0}_2, h^{u,0}_1,h^{u,0}_2)|_{t=0}=(0,0,0,0).
\end{array}
\right.
\end{equation}

It is noted that the boundary conditions for both correctors of velocity and magnetic fields are Dirichlet boundary conditions, then the well-posedness and regularity results of the above \eqref{appnoslip2} and \eqref{appnoslip3} are also classical, see \cite{Evans,Xin1} for details. Therefore the approximate solutions in this case are well-defined.

Following the arguments as before, let $\psi(z)$ be a smooth function on $[0,1]$ with
\begin{equation}\label{cutoffnonsl}
\psi(z)=\left \{
\begin{array}{lll}
1,& z\in [0,\frac{1}{3}],\\
0,& z \in [\frac{1}{2},1],\\
\mathrm{smooth},& \mathrm{otherwise}.
\end{array}
\right.
\end{equation}
We introduce the truncated approximations as follows
\begin{equation}\label{appnoslip4}
\left \{
\begin{array}{lll}
\tilde{u}^a_1(t;z)&:=u^{0}_1(t;z)+\psi(z)\theta^0_1\left(t;\frac{z}{\sqrt{\varepsilon}}\right)+\psi(1-z)\theta^{u,0}_1\left(t;\frac{1-z}{\sqrt{\varepsilon}}\right),\\
\tilde{u}^a_2(t;x,z)&:=u^{0}_2(t;x,z)+\psi(z)\theta^0_2\left(t;x,\frac{z}{\sqrt{\varepsilon}}\right)+\psi(1-z)\theta^{u,0}_2\left(t;x,\frac{1-z}{\sqrt{\varepsilon}}\right),\\
\tilde{h}^a_1(t;z)&:=H^{0}_1(t;z)+\psi(z)h^0_1\left(t;\frac{z}{\sqrt{\varepsilon}}\right)+\psi(1-z)h^{u,0}_1\left(t;\frac{1-z}{\sqrt{\varepsilon}}\right),\\
\tilde{h}^a_2(t;x,z)&:=H^{0}_2(t;x,z)+\psi(z)h^0_2\left(t;x,\frac{z}{\sqrt{\varepsilon}}\right)+\psi(1-z)h^{u,0}_2\left(t;x,\frac{1-z}{\sqrt{\varepsilon}}\right),
\end{array}
\right.
\end{equation}
then the $(\tilde{u}^a,\tilde{H}^a)$ satisfy that
\begin{equation}\label{appnoslip5}
\left \{
\begin{array}{lll}
\partial_t \tilde{u}^a_1-\varepsilon \partial_{zz} \tilde{u}^a_1=f_1+A+B,\\
\partial_t \tilde{u}^a_2-\varepsilon \Delta_{x,z}\tilde{u}^a_2 +\tilde{u}^a_1\partial_x \tilde{u}^a_2-\tilde{h}^a_1\partial_x \tilde{h}^a_2=f_2+C+D+E,\\
\partial_t \tilde{h}^a_1-\varepsilon \partial_{zz} \tilde{h}^a_1=F+G,\\
\partial_t \tilde{h}^a_2-\varepsilon \Delta_{x,z}\tilde{h}^a_2 +\tilde{u}^a_1\partial_x \tilde{h}^a_2-\tilde{h}^a_1\partial_x \tilde{u}^a_2=H+I+J,\\
\end{array}
\right.
\end{equation}
with the following initial data and boundary conditions
\begin{equation}\label{appnoslip6}
\left \{
\begin{array}{lll}
(\tilde{u}^a,\tilde{h}^a)|_{t=0}=(u_0,H_0),\\
(\tilde{u}^a,\tilde{h}^a)|_{z=i}=(\alpha^i,\gamma^i)(t;x), \ \ i=0,1,
\end{array}
\right.
\end{equation}
where the remainders are given by (\ref{3.5})--(\ref{3.14}).
Introduce the error solutions as follows
$$(u^{err},h^{err}):=(u^\varepsilon-\tilde{u}^a,H^\varepsilon-\tilde{h}^a),$$
then the equations for $(u^{err},h^{err})$ read  as
\begin{equation}\label{appnoslip6}
\left \{
\begin{array}{lll}
\partial_t u^{err}_1-\varepsilon \partial_{zz} u^{err}_1=-(A+B),\\
\partial_t u^{err}_2-\varepsilon\Delta_{x,z}u^{err}_2+u^{err}_1\partial_x \tilde{u}^a_2+u^\varepsilon_1\partial_x u^{err}_2\\
\quad \quad \quad \quad -h^{err}_1\partial_x \tilde{h}^a_2-h^\varepsilon_1\partial_x h^{err}_2=-(C+D+E),\\
\partial_t h^{err}_1-\varepsilon \partial_{zz} h^{err}_1=-(F+G),\\
\partial_t h^{err}_2-\varepsilon\Delta_{x,z}h^{err}_2+u^{err}_1\partial_x \tilde{h}^a_2+u^\varepsilon_1\partial_x h^{err}_2\\
\quad \quad \quad \quad -h^{err}_1\partial_x \tilde{u}^a_2-h^\varepsilon_1\partial_x u^{err}_2=-(H+I+J),\\
(u^{err}_1,u^{err}_2, h^{err}_1, h^{err}_2)|_{z=i}=(0,0,0,0),\ i=0,1,\\
(u^{err}_1,u^{err}_2, h^{err}_1,h^{err}_2)|_{t=0}=(0,0,0,0),
\end{array}
\right.
\end{equation}
where the remainders $A,B,C,D,E,F,G,H,I,J$ are defined as in (\ref{3.5})--(\ref{3.14}).

For the case with uniform magnetic background, we have the following result.
\begin{theorem}\label{thm:nonslip}
Suppose the initial and boundary data, external force satisfy that $u_0, H_0 \in H^m(\Omega)$, $f \in L^\infty(0,T;H^m(\Omega))$, $ \alpha^i,\gamma^i \in H^2(0,T; H^m(\partial{\Omega})),i=0,1, m>5$ and the compatibility condition (\ref{1.4a0}). Then there exist positive constants $C>0$, independent of $\varepsilon$, such that for any solution $(u^\varepsilon,H^\varepsilon)$ of (\ref{1.4}) with the initial data $(u_0,H_0)$ and boundary data $\alpha^i,\gamma^i$ in \eqref{1.3a}, satisfying that
\begin{equation}\label{nonslip1}
\Vert (u^\varepsilon-\tilde{u}^a,H^\varepsilon-\tilde{h}^a) \Vert_{L^\infty(0,T;L^2(\Omega))} \leq C \varepsilon^{\frac{3}{4}},
\end{equation}
\begin{equation}\label{nonslip2}
\Vert (u^\varepsilon-\tilde{u}^a,H^\varepsilon-\tilde{h}^a) \Vert_{L^\infty(0,T;H^1(\Omega))} \leq C \varepsilon^{\frac{1}{4}},
\end{equation}
\begin{equation}\label{nonslip3}
\Vert (u^\varepsilon-\tilde{u}^a,H^\varepsilon-\tilde{h}^a)\Vert_{L^\infty((0,T) \times \Omega))} \leq C \sqrt{\varepsilon},
\end{equation}
where $\tilde{u}^a,\tilde{h}^a$ are defined by (\ref{appnoslip4}).
\end{theorem}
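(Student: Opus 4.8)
The plan is to run the scheme of the proof of Theorem \ref{thm1} almost line by line, exploiting one genuine simplification: in the uniform-magnetic-background setting the truncated approximation \eqref{appnoslip4} already matches the Dirichlet data of \emph{both} $u^\varepsilon$ and $H^\varepsilon$ on $\{z=i\}$ — this is immediate from the boundary conditions imposed on the correctors in \eqref{appnoslip2}--\eqref{appnoslip3} together with $\psi(0)=1$ and $\psi(1)=0$ — so no additional boundary corrector $\eta^0,\eta^{u,0}$ has to be inserted, the remainders are exactly $A,B,\dots,J$ of \eqref{3.5}--\eqref{3.14}, and all four error components $u^{err}_1,u^{err}_2,h^{err}_1,h^{err}_2$ vanish on $\partial\Omega$. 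The corrector regularity and the weighted bounds recorded in Section \ref{boundarylayerequ} (via \cite{Mazzucato,Xin1,Evans}) carry over verbatim. I would first treat $(u^{err}_1,h^{err}_1)$, which decouple and solve one-dimensional heat equations on $(0,1)$ with homogeneous Dirichlet data forced by $-(A+B)$ and $-(F+G)$; testing by $u^{err}_1$ and then by $-\partial_{zz}u^{err}_1$ (and likewise for $h^{err}_1$), using that $A,F$ are supported away from the boundary and $B,G$ are $O(\varepsilon)$ in $L^2$, Cauchy--Schwarz and Gronwall give $\|u^{err}_1\|_{L^\infty(0,T;L^2)}\le C\varepsilon$, $\|u^{err}_1\|_{L^\infty(0,T;H^1)}\le C\sqrt\varepsilon$, hence $\|u^{err}_1\|_{L^\infty}\le C\varepsilon^{3/4}$ by one-dimensional interpolation, and identically for $h^{err}_1$.

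The $L^2$ estimate for $(u^{err}_2,h^{err}_2)$ would come from testing the $u^{err}_2$-equation by $u^{err}_2$, the $h^{err}_2$-equation by $h^{err}_2$, and adding. The crucial observation — the place where the stabilizing effect of the magnetic field actually enters — is that the cubic terms $\int_\Omega u^\varepsilon_1\,\partial_x u^{err}_2\,u^{err}_2$, $\int_\Omega u^\varepsilon_1\,\partial_x h^{err}_2\,h^{err}_2$ and the combination $\int_\Omega h^\varepsilon_1\big(\partial_x h^{err}_2\,u^{err}_2+\partial_x u^{err}_2\,h^{err}_2\big)=\int_\Omega h^\varepsilon_1\,\partial_x(u^{err}_2 h^{err}_2)$ all vanish after integrating by parts in $x$, precisely because the plane-parallel ansatz forces $u^\varepsilon_1=u^\varepsilon_1(t;z)$ and $h^\varepsilon_1=h^\varepsilon_1(t;z)$ to be independent of $x$. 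The remaining contributions are harmless: the terms carrying a factor $u^{err}_1$ or $h^{err}_1$ (such as $\int_\Omega u^{err}_1\,\partial_x\tilde u^a_2\,u^{err}_2$) are bounded by $C\varepsilon\|(u^{err}_2,h^{err}_2)\|_{L^2}$ using Step~1 together with the uniform-in-$\varepsilon$ bounds on $\|\partial_x\tilde u^a_2\|_{L^\infty}$ and $\|\partial_x\tilde h^a_2\|_{L^\infty}$, while $\int_\Omega(C+D+E)u^{err}_2+\int_\Omega(H+I+J)h^{err}_2\le C\varepsilon^{3/4}\|(u^{err}_2,h^{err}_2)\|_{L^2}$, the exponent $3/4$ arising from the pieces $\sqrt\varepsilon\,Z(\cdots)$ and $\sqrt\varepsilon\,Z^u(\cdots)$ of $D$ and $I$ after the change of variable $z=\sqrt\varepsilon Z$ (all other pieces being strictly smaller). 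Cauchy--Schwarz and Gronwall then yield $\|(u^{err}_2,h^{err}_2)\|_{L^\infty(0,T;L^2)}+\frac{\sqrt\varepsilon}{2}\|\nabla_{x,z}(u^{err}_2,h^{err}_2)\|_{L^2(0,T;L^2)}\le C\varepsilon^{3/4}$.

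For the first-order bounds I would differentiate the error equations for $(u^{err}_2,h^{err}_2)$ in $x$ and test by $-\partial_{xx}(u^{err}_2,h^{err}_2)$: the cancellations survive, since an $x$-derivative neither spoils the $x$-independence of $u^\varepsilon_1,h^\varepsilon_1$ nor touches the $\varepsilon^{-1/2}$ boundary-layer scaling, so $\|\partial_x(u^{err}_2,h^{err}_2)\|_{L^\infty(0,T;L^2)}$ and, by the same argument, $\|\partial_{xx}(u^{err}_2,h^{err}_2)\|_{L^\infty(0,T;L^2)}$ stay at $C\varepsilon^{3/4}$. Testing instead by $-\partial_{zz}(u^{err}_2,h^{err}_2)$ and by $-\partial_{xx}\partial_z(u^{err}_2,h^{err}_2)$ one loses a factor $\sqrt\varepsilon$ (a $z$-derivative falling on a corrector profile costs $\varepsilon^{-1/2}$, and $\partial_z u^{err}_2$, $\partial_z h^{err}_2$ need not vanish on $\partial\Omega$), and after absorbing $\frac{\varepsilon}{2}\|\partial_{zz}(u^{err}_2,h^{err}_2)\|_{L^2}^2$ by Cauchy's inequality and using $\varepsilon^{-1}\cdot\varepsilon^{3/2}=\sqrt\varepsilon$ one obtains $\|\partial_z(u^{err}_2,h^{err}_2)\|_{L^\infty(0,T;L^2)}+\|\partial_x\partial_z(u^{err}_2,h^{err}_2)\|_{L^\infty(0,T;L^2)}\le C\varepsilon^{1/4}$, whence $\|(u^{err}_2,h^{err}_2)\|_{L^\infty(0,T;H^1)}\le C\varepsilon^{1/4}$. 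Since here $u^{err}_2$ \emph{and} $h^{err}_2$ are both in $H^1_0(\Omega)$, Lemma \ref{lemma4.1} applies directly to each of them (the four-fold interpolation from \cite{Xin1} needed for $h^{err}_2$ in Theorem \ref{thm1} is no longer required), and combining the $\varepsilon^{3/4}$ bounds for the $L^2$ and $\partial_x\partial_z$ norms with the $\varepsilon^{1/4}$ bounds for the first-order norms gives $\|(u^{err}_2,h^{err}_2)\|_{L^\infty((0,T)\times\Omega)}\le C\sqrt\varepsilon$. Together with Step~1 this establishes \eqref{nonslip1}, \eqref{nonslip2} and \eqref{nonslip3}.

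I expect the only genuinely delicate point — as opposed to lengthy but routine bookkeeping of the remainder terms — to be verifying that the $x$-cancellations of the convective and Lorentz/stretching terms persist at \emph{every} order of differentiation used (zeroth, $\partial_x$, $\partial_{xx}$), so that the a priori ``strong'' no-slip magnetic boundary layer never feeds a destabilizing contribution into the energy identities; this is exactly the mechanism that forces the convergence rates to coincide with the perfectly-conducting case and is what is meant by the stabilizing effect of the magnetic field. As in Section \ref{pf}, the $\varepsilon^{1/4}$ rate in the $H^1$ estimate is optimal within this scheme, reflecting the unavoidable $\sqrt\varepsilon$ loss when a $z$-derivative hits the boundary-layer profile, and it cannot be improved by integrating by parts in $z$ because $\partial_z(u^{err}_2,h^{err}_2)$ is not controlled on $\partial\Omega$.
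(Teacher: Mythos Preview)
Your proposal is correct and follows essentially the same route as the paper, which in fact merely writes ``The proof is similar to the arguments as in Section \ref{pf}, the result follows. Here we omit the details.'' You have accurately identified the two genuine simplifications relative to Theorem \ref{thm1}: no boundary corrector $\eta^0,\eta^{u,0}$ is needed (so the remainders are exactly $A,\dots,J$), and since $h^{err}_2$ now satisfies homogeneous Dirichlet conditions, Lemma \ref{lemma4.1} applies to it directly rather than requiring the four-fold interpolation from \cite{Xin1}.
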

\begin{proof}
The proof is similar to the arguments as in Section \ref{pf}, the result follows. Here we omit the details.
\end{proof}
\begin{remark}
Compared with Theorem \ref{thm1}, the convergence rates in Theorem \ref{thm:nonslip} do not become worse, which imply the stabilizing effect of the magnetic fields and the good structure of the plane MHD flow.
\end{remark}
The optimal convergence rate result can be obtained similarly and here we omit it.

\section{Improved convergence rates}\label{improved}
In this section, we would use the higher-order expansions to improve the convergence rates obtained in Section \ref{pf} and Section \ref{stabilityeffect}. Note that the arguments for the case with perfectly conducting wall are similar to that with uniform magnetic background, therefore we only discuss the case with uniform magnetic background for simplicity.
We introduce
\begin{equation}\label{5.1}
\left \{
\begin{array}{lll}
u^{a,1}(t;x,z)=u^{ou}(t;x,z)+u^{lc}\left(t;x,\frac{z}{\sqrt{\varepsilon}}\right)+u^{uc}\left(t;x,\frac{1-z}{\sqrt{\varepsilon}}\right),\\
h^{a,1}(t;x,z)=H^{ou}(t;x,z)+H^{lc}\left(t;x,\frac{z}{\sqrt{\varepsilon}}\right)+H^{uc}\left(t;x,\frac{1-z}{\sqrt{\varepsilon}}\right),
\end{array}
\right.
\end{equation}
in which, the outer solution, lower corrector and upper corrector are defined as
\begin{equation}\label{5.2}
\left \{
\begin{array}{lll}
u^{\textrm{ou}}_1&:=u^0_1(t;z)+\sqrt{\varepsilon}u^1_1(t;z),\\
u^{\textrm{ou}}_2&:=u^0_2(t;x,z)+\sqrt{\varepsilon}u^1_2(t;x,z),\\
H^{\textrm{ou}}_1&:=H^0_1(z)+\sqrt{\varepsilon}H^1_1(t;z),\\
H^{\textrm{ou}}_2&:=H^0_2(t;x,z)+\sqrt{\varepsilon}H^1_2(t;x,z),
\end{array}
\right.
\end{equation}
\begin{equation}\label{5.3}
\left \{
\begin{array}{lll}
u^{\textrm{lc}}_1\left(t;\frac{z}{\sqrt{\varepsilon}}\right)&:=\theta^0_1\left(t;\frac{z}{\sqrt{\varepsilon}}\right)+\sqrt{\varepsilon}\theta^1_1\left(t;\frac{z}{\sqrt{\varepsilon}}\right),\\
u^{\textrm{lc}}_2\left(t;x,\frac{z}{\sqrt{\varepsilon}}\right)&:=\theta^0_2\left(t;x,\frac{z}{\sqrt{\varepsilon}}\right)+\sqrt{\varepsilon}\theta^1_2\left(t;x,\frac{z}{\sqrt{\varepsilon}}\right),\\
H^{\textrm{lc}}_1\left(t;\frac{z}{\sqrt{\varepsilon}}\right)&:=h^0_1\left(t;\frac{z}{\sqrt{\varepsilon}}\right)+\sqrt{\varepsilon}h^1_1\left(t;\frac{z}{\sqrt{\varepsilon}}\right),\\
H^{\textrm{lc}}_2\left(t;x,\frac{z}{\sqrt{\varepsilon}}\right)&:=h^0_2\left(t;x,\frac{z}{\sqrt{\varepsilon}}\right)+\sqrt{\varepsilon}h^1_2\left(t;x,\frac{z}{\sqrt{\varepsilon}}\right),
\end{array}
\right.
\end{equation}
and
\begin{equation}\label{5.4}
\left \{
\begin{array}{lll}
u^{\textrm{uc}}_1\left(t;\frac{1-z}{\sqrt{\varepsilon}}\right)&:=\theta^{u,0}_1\left(t;\frac{1-z}{\sqrt{\varepsilon}}\right)+\sqrt{\varepsilon}\theta^{u,1}_1\left(t;\frac{1-z}{\sqrt{\varepsilon}}\right),\\
u^{\textrm{uc}}_2\left(t;x,\frac{1-z}{\sqrt{\varepsilon}}\right)&:=\theta^{u,0}_2\left(t;x,\frac{1-z}{\sqrt{\varepsilon}}\right)+\sqrt{\varepsilon}\theta^{u,1}_2\left(t;x,\frac{1-z}{\sqrt{\varepsilon}}\right),\\
H^{\textrm{uc}}_1\left(t;\frac{1-z}{\sqrt{\varepsilon}}\right)&:=h^{u,0}_1\left(t;\frac{1-z}{\sqrt{\varepsilon}}\right)+\sqrt{\varepsilon}h^{u,1}_1\left(t;\frac{1-z}{\sqrt{\varepsilon}}\right),\\
H^{\textrm{uc}}_2\left(t;x,\frac{1-z}{\sqrt{\varepsilon}}\right)&:=h^{u,0}_2\left(t;x,\frac{1-z}{\sqrt{\varepsilon}}\right)+\sqrt{\varepsilon}h^{u,1}_2\left(t;x,\frac{1-z}{\sqrt{\varepsilon}}\right),
\end{array}
\right.
\end{equation}
the correctors should obey the matching conditions
\begin{equation}\label{5.5}
\left \{
\begin{array}{lll}
(\theta^i_1,\theta^{i}_2,h^i_1,h^{i}_2) \to (0,0,0,0) \ \ &\textrm{as} \ Z \to \infty,\\
(\theta^{u,i}_1,\theta^{u,i}_2,h^{u,i}_1,h^{u,i}_2)\to (0,0,0,0)  \ \ &\textrm{as} \ Z^u \to \infty,
\end{array}
\right.
\end{equation}
where $i=0,1$ and $Z=\frac{z}{\sqrt{\varepsilon}}$ and $Z^u=\frac{1-z}{\sqrt{\varepsilon}}$.

We would derive the equations satisfied by the outer solutions and the correctors. It is obvious that the terms of the leading order terms in the outer solutions and the correctors are determined by the ideal MHD solutions $(u^0,H^0)$ and $(\theta^0,h^0),(\theta^{u,0},h^{u,0})$ constructed in Section \ref{stabilityeffect}, respectively. The first-order terms are determined as follows:

(I) The first-order terms of the outer solutions $(u^1,H^1)$ satisfy the following equations:
\begin{equation}\label{5.6}
\left \{
\begin{array}{lll}
\partial_t u^1_1=0,\\
\partial_t u^1_2+u^0_1\partial_x u^1_2+u^1_1\partial_x u^0_1-H^0_1\partial_x H^1_2-H^1_1\partial_x H^0_2=0,\\
\partial_t H^1_1=0,\\
\partial_t H^1_2+u^0_1\partial_x H^1_2+u^1_1\partial_x H^0_1-H^0_1\partial_x u^1_2-H^1_1\partial_x u^0_2=0,\\
(u^1_1,u^1_2,H^1_1,H^1_2)|_{t=0}=(0,0,0,0).
\end{array}
\right.
\end{equation}

For given the ideal MHD solutions $(u^0,H^0)$ with enough regularity, it can be deduced that $(u^1_1,u^1_2,H^1_1,H^1_2)\equiv (0,0,0,0)$. Indeed, it follows from (\ref{5.6})$_{1,3}$ that $(u^1_1,H^1_1)\equiv (0,0)$. Therefore (\ref{5.6}) can be reduced to
\begin{equation}\label{5.7}
\left \{
\begin{array}{lll}
(u^1_1,H^1_1)\equiv (0,0),\\
\partial_t u^1_2+u^0_1\partial_x u^1_2-H^0_1\partial_x H^1_2=0,\\
\partial_t H^1_2+u^0_1\partial_x H^1_2-H^0_1\partial_x u^1_2=0,\\
(u^1_1,u^1_2,H^1_1,H^1_2)|_{t=0}=(0,0,0,0).
\end{array}
\right.
\end{equation}
Let $U=(u^1_2,H^1_2)$, it can rewritten as
\begin{equation}\label{5.8}
\left \{
\begin{array}{lll}
\partial_t U+A\partial_x U=0,\\
U|_{t=0}=(0,0),
\end{array}
\right.
\end{equation}
where $A$ is a regular enough matrix given by
\begin{gather*}
A=\begin{pmatrix} u^0_1  & -H^0_1 \\ -H^0_1 & u^0_1 \end{pmatrix}.
\end{gather*}
Then one can know from the theory of hyperbolic systems that $U\equiv (0,0)$.

(II) The first-order terms in the lower correctors $(\theta^1,h^1)$ satisfy
\begin{equation}\label{5.9}
\left \{
\begin{array}{lll}
\partial_t \theta^1_1-\partial_{ZZ}\theta^1_1=0,\\
\partial_t \theta^1_2-\partial_{ZZ}\theta^1_2+(u^0_1(t;0)+\theta^0_1)\partial_x\theta^1_2+\theta^1_1(\partial_x \theta^0_2+\partial_xu^0_2(t;x,0))\\
\quad -(H^0_1(0)+h^0_1)\partial_xh^1_2-h^1_1(\partial_x h^0_2+\partial_xH^0_2(t;x,0))\\
\quad =-Z\bigg(\theta^0_1\partial_{zx}u^0_2(t;x,0)+\partial_z u^0_1(t;0)\partial_x\theta^0_2\\
\quad\quad -h^0_1\partial_{zx}H^0_2(t;x,0)-\partial_z H^0_1(0)\partial_xh^0_2\bigg),\\
\partial_t h^1_1-\partial_{ZZ}h^1_1=0,\\
\partial_t h^1_2-\partial_{ZZ}h^1_2+(u^0_1(t;0)+\theta^0_1)\partial_x h^1_2+\theta^1_1(\partial_x h^0_2+\partial_xH^0_2(t;x,0))\\
\quad -(H^0_1(0)+h^0_1)\partial_xu^1_2-h^1_1(\partial_x \theta^0_2+\partial_xu^0_2(t;x,0))\\
\quad =-Z\bigg(\theta^0_1\partial_{zx}H^0_2(t;x,0)+\partial_z u^0_1(t;0)\partial_xh^0_2\\
\quad\quad -h^0_1\partial_{zx}u^0_2(t;x,0)-\partial_z H^0_1(0)\partial_x\theta^0_2\bigg),\\
(\theta^1_1,\theta^1_2, h^1_1, h^1_2)|_{Z=0}=(0,0,0,0),\\
(\theta^1_1,\theta^1_2,h^1_1,h^1_2)|_{Z=\infty}=(0,0,0,0),\\
(\theta^1_1,\theta^1_2,h^1_1,h^1_2)|_{t=0}=(0,0,0,0).
\end{array}
\right.
\end{equation}

By (\ref{5.9})$_{1,3}$, the boundary conditions and initial data, we deduce that
\begin{equation}\label{5.10}
(\theta^1_1,h^1_1)\equiv (0,0).
\end{equation}
Therefore, the system (\ref{5.9}) can be reduced to
\begin{equation}\label{5.11}
\left \{
\begin{array}{lll}
(\theta^1_1,h^1_1)=(0,0),\\
\partial_t \theta^1_2-\partial_{ZZ}\theta^1_2+(u^0_1(t;0)+\theta^0_1)\partial_x\theta^1_2 -(H^0_1(0)+h^0_1)\partial_xh^1_2\\
\quad =-Z\bigg(\theta^0_1\partial_{zx}u^0_2(t;x,0)+\partial_z u^0_1(t;0)\partial_x\theta^0_2\\
\quad\quad -h^0_1\partial_{zx}H^0_2(t;x,0)-\partial_z H^0_1(0)\partial_xh^0_2\bigg),\\
\partial_t h^1_2-\partial_{ZZ}h^1_2+(u^0_1(t;0)+\theta^0_1)\partial_x h^1_2-(H^0_1(0)+h^0_1)\partial_xu^1_2\\
\quad =-Z\bigg(\theta^0_1\partial_{zx}H^0_2(t;x,0)+\partial_z u^0_1(t;0)\partial_xh^0_2\\
\quad\quad -h^0_1\partial_{zx}u^0_2(t;x,0)-\partial_z H^0_1(0)\partial_x\theta^0_2\bigg),\\
(\theta^1_2,h^1_2)|_{Z=0}=(0,0),\\
(\theta^1_2,h^1_2)|_{Z=\infty}=(0,0),\\
(\theta^1_2,h^1_2)|_{t=0}=(0,0).
\end{array}
\right.
\end{equation}

(III) Similarly, one can deduce that first-order terms in the upper correctors $(\theta^{u,1},h^{u,1})$ satisfy
\begin{equation}\label{5.12}
\left \{
\begin{array}{lll}
(\theta^{u,1}_1,h^{u,1}_1)=(0,0),\\
\partial_t \theta^{u,1}_2-\partial_{Z^uZ^u}\theta^{u,1}_2+(u^0_1(t;1)+\theta^{u,0}_1)\partial_x\theta^{u,1}_2 -(H^0_1(1)+h^{u,0}_1)\partial_xh^{u,1}_2\\
\quad =-Z^u\bigg(\theta^{u,0}_1\partial_{zx}u^0_2(t;x,1)+\partial_z u^0_1(t;1)\partial_x\theta^{u,0}_2\\
\quad\quad -h^{u,0}_1\partial_{zx}H^0_2(t;x,1)-\partial_z H^0_1(1)\partial_xh^{u,0}_2\bigg),\\
\partial_t h^{u,1}_2-\partial_{Z^uZ^u}h^{u,1}_2+(u^0_1(t;1)+\theta^{u,0}_1)\partial_x h^{u,1}_2-(H^0_1(1)+h^{u,0}_1)\partial_xu^{u,1}_2\\
\quad =-Z^u\bigg(\theta^{u,0}_1\partial_{zx}H^0_2(t;x,1)+\partial_z u^0_1(t;1)\partial_xh^{u,0}_2\\
\quad\quad -h^{u,0}_1\partial_{zx}u^0_2(t;x,1)-\partial_z H^0_1(1)\partial_x\theta^{u,0}_2\bigg),\\
(\theta^{u,1}_2,h^{u,1}_2)|_{Z^u=0}=(0,0),\\
(\theta^{u,1}_2,h^{u,1}_2)|_{Z^u=\infty}=(0,0),\\
(\theta^{u,1}_2,h^{u,1}_2)|_{t=0}=(0,0).
\end{array}
\right.
\end{equation}

Note that the above problems (\ref{5.11}) and (\ref{5.12}) are linear, and their well-posedness theory with different boundary condition and weighted estimates can be obtained by the standard Picard iteration method, which are similar to that in Section \ref{boundarylayerequ}. The first-order compatibility conditions are used to improve the regularity of the correctors. Similar arguments can be applied for case with perfectly conducting condition. We omit the details of the proof and refer to \cite{Mazzucato,Xin1} for instance.

As before, we introduce the modified approximate solutions $(\tilde{u}^{a,1},\tilde{h}^{a,1})$ as follows
\begin{equation}\label{5.13}
\left \{
\begin{array}{lll}
\tilde{u}^{a,1}_1(t;z)&:=u^{0}_1(t;z)+\psi(z)\theta^0_1\left(t;\frac{z}{\sqrt{\varepsilon}}\right)+\psi(1-z)\theta^{u,0}_1\left(t;\frac{1-z}{\sqrt{\varepsilon}}\right),\\
\tilde{u}^{a,1}_2(t;x,z)&:=u^{0}_2(t;x,z)+\psi(z)\left(\theta^0_2\left(t;x,\frac{z}{\sqrt{\varepsilon}}\right)+\sqrt{\varepsilon}\theta^1_2\left(t;\frac{z}{\sqrt{\varepsilon}}\right)\right)\\
&\quad \quad +\psi(1-z)\left(\theta^{u,0}_2\left(t;x,\frac{1-z}{\sqrt{\varepsilon}}\right)+\sqrt{\varepsilon}\theta^{u,1}_2\left(t;x,\frac{1-z}{\sqrt{\varepsilon}}\right)\right),\\
\tilde{h}^{a,1}_1(t;z)&:=H^{0}_1(t;z)+\psi(z)h^0_1\left(t;\frac{z}{\sqrt{\varepsilon}}\right)+\psi(1-z)h^{u,0}_1\left(t;\frac{1-z}{\sqrt{\varepsilon}}\right),\\
\tilde{h}^{a,1}_2(t;x,z)&:=H^{0}_2(t;x,z)+\psi(z)\left(h^0_2\left(t;x,\frac{z}{\sqrt{\varepsilon}}\right)+\sqrt{\varepsilon}h^1_2\left(t;x,\frac{z}{\sqrt{\varepsilon}}\right)\right)\\
&\quad \quad +\psi(1-z)\left(h^{u,0}_2\left(t;x,\frac{1-z}{\sqrt{\varepsilon}}\right)+\sqrt{\varepsilon}h^{u,1}_2\left(t;x,\frac{1-z}{\sqrt{\varepsilon}}\right)\right),
\end{array}
\right.
\end{equation}
in which $\psi$ is the cut-off function use in Section \ref{approximate}. Here we note that $(u^1,H^1)=(0,0)$ and $(\theta^1_1,h^1_1,\theta^{u,1}_1,h^{u,1}_1)=(0,0,0,0)$, therefore $(\tilde{u}^{a,1}_1,\tilde{H}^{a,1}_1)=(\tilde{u}^{a}_1,\tilde{H}^{a}_1)$.

The modified approximate solutions satisfy the following equations
\begin{equation}\label{5.14}
\left \{
\begin{array}{lll}
\partial_t \tilde{u}^{a,1}_1-\varepsilon \partial_{zz} \tilde{u}^{a,1}_1=f_1+A+B,\\
\partial_t \tilde{u}^{a,1}_2-\varepsilon \Delta_{x,z}\tilde{u}^{a,1}_2 +\tilde{u}^{a,1}_1\partial_x \tilde{u}^{a,1}_2-\tilde{h}^{a,1}_1\partial_x \tilde{h}^{a,1}_2=f_2+C+\hat{D}+\hat{E}+\hat{M},\\
\partial_t \tilde{h}^{a,1}_1-\varepsilon \partial_{zz} \tilde{h}^{a,1}_1=F+G,\\
\partial_t \tilde{h}^{a,1}_2-\varepsilon \Delta_{x,z}\tilde{h}^{a,1}_2 +\tilde{u}^{a,1}_1\partial_x \tilde{h}^{a,1}_2-\tilde{h}^{a,1}_1\partial_x \tilde{u}^{a,1}_2=H+\hat{I}+\hat{J}+\hat{N},\\
\end{array}
\right.
\end{equation}
with the following initial and boundary conditions
\begin{equation}\label{5.15}
\left \{
\begin{array}{lll}
(\tilde{u}^{a,1},\tilde{h}^{a,1})|_{t=0}=(u_0,H_0),\\
(\tilde{u}^{a,1},\tilde{h}^{a,1})|_{z=i}=(\alpha^i,\gamma^i)(t;x), \ \ i=0,1,
\end{array}
\right.
\end{equation}
in which, the remainders $A,B,C,F,G,H$ are given as in Section \ref{approximate}, and the others are defined as follows:
\begin{equation}\label{5.16}
\begin{array}{lll}
\hat{D}=&\sqrt{\varepsilon}\Big[\psi(z)(\psi(z)-1)\left(\theta^0_1\partial_x \theta^1_2-h^0_1\partial_x h^1_2\right)\\
&+\psi(1-z)(\psi(1-z)-1)\left(\theta^{u,0}_1\partial_x \theta^{u,1}_2-h^{u,0}_1\partial_x h^{u,1}_2\right)\\
&-2\psi'(z)\partial_Z\theta^0_2-2\psi'(1-z)\partial_{Z^u}\theta^{u,0}_2\Big],
\end{array}
\end{equation}
\begin{equation}\label{5.17}
\begin{array}{lll}
\hat{E}=&\varepsilon\bigg[\psi(z)\bigg(Z\partial_x \theta^1_2\partial_z u^0_1(t;0)+\frac{1}{2}\partial_{zz}u^0_1(t;0)Z^2\partial_x \theta^0_2\\
&+\frac{1}{2}Z^2\theta^0_1\partial_{xzz}u^0_2(t;x,0)-Z\partial_x h^1_2\partial_z H^0_1(0)-\frac{1}{2}\partial_{zz}H^0_1(0)Z^2\partial_x h^0_2\\
&-\frac{1}{2}Z^2h^0_1\partial_{xzz}H^0_2(t;x,0)\bigg) \\
&+\psi(1-z)\bigg(-Z^u\partial_x \theta^{u,1}_2\partial_z u^0_1(t;1)+\frac{1}{2}\partial_{zz}u^0_1(t;1)(Z^u)^2\partial_x \theta^{u,0}_2\\
&+\frac{1}{2}(Z^u)^2\theta^{u,0}_1\partial_{xzz}u^0_2(t;x,1) +Z^u\partial_x h^{u,1}_2\partial_z H^0_1(1)\\
&-\frac{1}{2}\partial_{zz}H^0_1(1)(Z^u)^2\partial_x h^{u,0}_2-\frac{1}{2}(Z^u)^2h^{u,0}_1\partial_{xzz}H^0_2(t;x,1)\bigg)\\
&-\Delta_{x,z}u^0_2-\psi(z)\partial_{xx}\theta^0_2\\
&-\psi(1-z)\partial_{xx}\theta^{u,0}_2-\psi''(z)\theta^0_2-\psi''(1-z)\theta^{u,0}_2\bigg],
\end{array}
\end{equation}
\begin{equation}\label{5.18}
\begin{array}{lll}
\hat{M}=&\varepsilon^{\frac{3}{2}}\bigg[-\psi''(z)\theta^1_2-\psi''(1-z)\theta^{u,1}_2\\
&+\psi(z)\bigg(\frac{1}{2}\partial_{z}^2 u^0_1(t;0)Z^2\partial_x\theta^1_2-\partial_x^2\theta^1_2\bigg)\\
&+\psi(1-z)\bigg(\frac{1}{2}\partial_{z}^2 u^0_1(t;1)(Z^u)^2\partial_x\theta^{u,1}_2-\partial_x^2\theta^{u,1}_2\bigg)\\
&-\psi(z)\bigg(\frac{1}{2}\partial_{z}^2 H^0_1(0)Z^2\partial_xh^1_2-\partial_x^2h^1_2\bigg)\\
&-\psi(1-z)\bigg(\frac{1}{2}\partial_{z}^2 H^0_1(1)(Z^u)^2\partial_xh^{u,1}_2-\partial_x^2h^{u,1}_2\bigg)\bigg],
\end{array}
\end{equation}
\begin{equation}\label{5.19}
\begin{array}{lll}
\hat{I}=&\sqrt{\varepsilon}\Big[\psi(z)(\psi(z)-1)\left(\theta^0_1\partial_x h^1_2-h^0_1\partial_x \theta^1_2\right)\\
&+\psi(1-z)(\psi(1-z)-1)\left(\theta^{u,0}_1\partial_x h^{u,1}_2-h^{u,0}_1\partial_x \theta^{u,1}_2\right)\\
&-2\psi'(z)\partial_Zh^0_2-2\psi'(1-z)\partial_{Z^u}h^{u,0}_2\Big],
\end{array}
\end{equation}
\begin{equation}\label{5.20}
\begin{array}{lll}
\hat{J}=&\varepsilon\bigg[\psi(z)\bigg(Z\partial_x h^1_2\partial_z u^0_1(t;0)+\frac{1}{2}\partial_{zz}u^0_1(t;0)Z^2\partial_x h^0_2\\
&+\frac{1}{2}Z^2\theta^0_1\partial_{xzz}H^0_2(t;x,0)-Z\partial_x \theta^1_2\partial_z H^0_1(0)-\frac{1}{2}\partial_{zz}H^0_1(t;0)Z^2\partial_x \theta^0_2\\
&-\frac{1}{2}Z^2h^0_1\partial_{xzz}u^0_2(t;x,0)\bigg) \\
&+\psi(1-z)\bigg(-Z^u\partial_x h^{u,1}_2\partial_z u^0_1(t;1)+\frac{1}{2}\partial_{zz}u^0_1(t;1)Z^2\partial_x h^{u,0}_2\\
&+\frac{1}{2}(Z^u)^2\theta^{u,0}_1\partial_{xzz}H^0_2(t;x,1)+Z^u\partial_x \theta^{u,1}_2\partial_z H^0_1(1)\\
&-\frac{1}{2}\partial_{zz}H^0_1(1)(Z^u)^2\partial_x \theta^{u,0}_2-\frac{1}{2}(Z^u)^2h^{u,0}_1\partial_{xzz}u^0_2(t;x,1)\bigg)\\
&-\Delta_{x,z}H^0_2-\psi(z)\partial_{xx}h^0_2\\
&-\psi(1-z)\partial_{xx}h^{u,0}_2-\psi''(z)h^0_2-\psi''(1-z)h^{u,0}_2\bigg],
\end{array}
\end{equation}
\begin{equation}\label{5.21}
\begin{array}{lll}
\hat{N}=&\varepsilon^{\frac{3}{2}}\bigg[-\psi''(z)h^1_2-\psi''(1-z)h^{u,1}_2\\
&+\psi(z)\bigg(\frac{1}{2}\partial_{z}^2 u^0_1(t;0)Z^2\partial_xh^1_2-\partial_x^2h^1_2\bigg)\\
&+\psi(1-z)\bigg(\frac{1}{2}\partial_{z}^2 u^0_1(t;1)(Z^u)^2\partial_xh^{u,1}_2-\partial_x^2h^{u,1}_2\bigg)\\
&-\psi(z)\bigg(\frac{1}{2}\partial_{z}^2 H^0_1(0)Z^2\partial_x\theta^1_2-\partial_x^2\theta^1_2\bigg)\\
&-\psi(1-z)\bigg(\frac{1}{2}\partial_{z}^2 H^0_1(1)(Z^u)^2\partial_x\theta^{u,1}_2-\partial_x^2\theta^{u,1}_2\bigg)\bigg].
\end{array}
\end{equation}

Similar to that in Section \ref{pf}, we introduce the error solutions
$$(\hat{u}^{err},\hat{h}^{err}):=(u^\varepsilon-\tilde{u}^{a,1},H^\varepsilon-\tilde{h}^{a,1}),$$
then the equations for $(\hat{u}^{err},\hat{h}^{err})$ read  as
\begin{equation}\label{5.22}
\left \{
\begin{array}{lll}
\partial_t \hat{u}^{err}_1-\varepsilon \partial_{zz} \hat{u}^{err}_1=-(A+B),\\
\partial_t \hat{u}^{err}_2-\varepsilon\Delta_{x,z}\hat{u}^{err}_2+\hat{u}^{err}_1\partial_x \tilde{u}^{a,1}_2+u^\varepsilon_1\partial_x \hat{u}^{err}_2\\
\quad \quad \quad \quad -\hat{h}^{err}_1\partial_x \tilde{h}^{a,1}_2-H^\varepsilon_1\partial_x \hat{h}^{err}_2=-(C+\hat{D}+\hat{E}+\hat{M}),\\
\partial_t \hat{h}^{err}_1-\varepsilon \partial_{zz} \hat{h}^{err}_1=-(F+G),\\
\partial_t \hat{h}^{err}_2-\varepsilon\Delta_{x,z}\hat{h}^{err}_2+\hat{u}^{err}_1\partial_x \tilde{h}^{a,1}_2+u^\varepsilon_1\partial_x \hat{h}^{err}_2\\
\quad \quad \quad \quad -\hat{h}^{err}_1\partial_x \tilde{u}^{a,1}_2-H^\varepsilon_1\partial_x \hat{u}^{err}_2=-(H+\hat{I}+\hat{J}+\hat{N}),\\
(\hat{u}^{err}_1,\hat{u}^{err}_2, \hat{h}^{err}_1, \hat{h}^{err}_2)|_{z=i}=(0,0,0,0), \ \ i=0,1,\\
(\hat{u}^{err}_1,\hat{u}^{err}_2, \hat{h}^{err}_1,\hat{h}^{err}_2)|_{t=0}=(0,0,0,0).
\end{array}
\right.
\end{equation}

By the higher expansion, we can improve the convergence rates of the Theorem \ref{thm:nonslip}.
\begin{theorem}\label{thm3}
Suppose the initial and boundary data, external force satisfy that $u_0, H_0 \in H^m(\Omega)$, $f \in L^\infty(0,T;H^m(\Omega)), \alpha^i,\gamma^i \in H^2(0,T; H^m(\partial{\Omega}))$, $i=0,1, m>8$ and the compatibility conditions (\ref{1.4a0}) and (\ref{1.4b0}). Then there exist positive constants $C>0$, independent of $\varepsilon$, such that for any solution $(u^\varepsilon,H^\varepsilon)$ of (\ref{1.4}) with the initial data $(u_0,H_0)$ and boundary data $\alpha^i,\gamma^i$ in \eqref{1.3a}, it holds that
\begin{equation}\label{5.23}
\Vert (u^\varepsilon-\tilde{u}^{a,1},H^\varepsilon-\tilde{h}^{a,1}) \Vert_{L^\infty(0,T;H^1(\Omega))} \leq C \sqrt{\varepsilon},
\end{equation}
\begin{equation}\label{5.24}
\Vert (u^\varepsilon-\tilde{u}^{a,1},H^\varepsilon-\tilde{h}^{a,1})\Vert_{L^\infty((0,T) \times \Omega))} \leq C \varepsilon^{3/4},
\end{equation}
where $\tilde{u}^{a,1},\tilde{h}^{a,1}$ are defined by (\ref{5.13}).
\end{theorem}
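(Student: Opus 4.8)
The plan is to rerun the energy scheme of Section~\ref{pf} on the error system \eqref{5.22}. The crucial new feature is that the first-order profiles have been chosen, through \eqref{5.11}--\eqref{5.12}, precisely so that the surviving consistency errors $\hat D,\hat E,\hat M,\hat I,\hat J,\hat N$ are one power of $\sqrt\varepsilon$ smaller (in the relevant weighted norms) than their zeroth-order analogues $D,E,\dots$; propagating this gain through exactly the same estimates will turn the $L^\infty(H^1)$ rate $\varepsilon^{1/4}$ and the $L^\infty$ rate $\varepsilon^{1/2}$ of Theorem~\ref{thm1} into $\varepsilon^{1/2}$ and $\varepsilon^{3/4}$, respectively.

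First I would record the estimates for the first components for free. Since $(u^1_1,H^1_1)\equiv(0,0)$ and $(\theta^1_1,h^1_1,\theta^{u,1}_1,h^{u,1}_1)\equiv(0,0,0,0)$, one has $\tilde u^{a,1}_1=\tilde u^{a}_1$ and $\tilde h^{a,1}_1=\tilde h^{a}_1$, so \eqref{5.22}$_{1,3}$ coincide with \eqref{4.1}$_{1,3}$; hence Step 1 of the proof of Theorem~\ref{thm1} applies verbatim and yields $\Vert(\hat u^{err}_1,\hat h^{err}_1)\Vert_{L^\infty(0,T;L^2)}\le C\varepsilon$, $\Vert(\hat u^{err}_1,\hat h^{err}_1)\Vert_{L^\infty(0,T;H^1)}\le C\sqrt\varepsilon$ and $\Vert(\hat u^{err}_1,\hat h^{err}_1)\Vert_{L^\infty((0,T)\times\Omega)}\le C\varepsilon^{3/4}$, already consistent with \eqref{5.23}--\eqref{5.24}.

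The heart of the matter is the estimate for $(\hat u^{err}_2,\hat h^{err}_2)$. One carries out, in turn, the $L^2$ estimate (test \eqref{5.22}$_{2,4}$ with $\hat u^{err}_2,\hat h^{err}_2$), the $\partial_x$ estimate (test with $-\partial_{xx}\hat u^{err}_2,-\partial_{xx}\hat h^{err}_2$, then integrate by parts in $x$, which is legitimate by periodicity and costs no power of $\varepsilon$ since $\partial_x$ commutes with the boundary-layer scaling), the $\partial_z$ estimate (test with $-\partial_{zz}\hat u^{err}_2,-\partial_{zz}\hat h^{err}_2$) and the mixed $\partial_x\partial_z$ estimate, exactly along the lines of \eqref{4.13}, \eqref{4.25}, \eqref{4.37}. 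The only new input is the size of the right-hand sides: the change of variables $z=\sqrt\varepsilon Z$ gains a factor $\varepsilon^{1/4}$ on every boundary-layer profile measured in $L^2_z$; the terms localized by $\psi(\psi-1)$, $\psi'$ or $\psi''$ (those inside $A,B,C,F,G,H,\hat D,\hat I$) live where $z$ is bounded away from $0,1$, hence $Z\gtrsim\varepsilon^{-1/2}$, and the weighted corrector estimates make them $O(\varepsilon^N)$ for every $N$; the genuinely surviving pieces of $\hat E,\hat M,\hat J,\hat N$ carry an $\varepsilon$ (or $\varepsilon^{3/2}$) prefactor in front of profiles such as $Z^2\partial_x\theta^0_2$ or of the smooth ideal-flow remainders $\Delta_{x,z}u^0_2,\Delta_{x,z}H^0_2$, so that $\Vert(\hat E,\hat M,\hat J,\hat N)\Vert_{L^\infty(0,T;L^2)}\le C\varepsilon$; and the coupling terms $\hat u^{err}_1\partial_x\tilde u^{a,1}_2$, $\hat h^{err}_1\partial_x\tilde h^{a,1}_2$, etc., are $O(\varepsilon)$ because $\Vert(\hat u^{err}_1,\hat h^{err}_1)\Vert_{L^2}\le C\varepsilon$. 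Inserting these bounds into the energy identities and applying Cauchy's and Gronwall's inequalities, the $L^2$ and $\partial_x$ estimates give $\Vert(\hat u^{err}_2,\hat h^{err}_2)\Vert_{L^\infty(L^2)}+\Vert\partial_x(\hat u^{err}_2,\hat h^{err}_2)\Vert_{L^\infty(L^2)}\le C\varepsilon$ (no $\varepsilon^{-1/2}$ loss, since only $\partial_x$ falls on the remainder); in the $\partial_z$ and $\partial_x\partial_z$ estimates one cannot integrate by parts in $z$ --- $\partial_z\hat u^{err}_2$ need not vanish on $\{z=i\}$, and for $\hat h^{err}_2$ doing so merely trades the loss for a $z$-derivative on the remainder, which costs the same $\varepsilon^{-1/2}$ --- so one absorbs $\int R\,\partial_{zz}(\hat u^{err}_2,\hat h^{err}_2)$ into $\varepsilon\Vert\partial_{zz}(\hat u^{err}_2,\hat h^{err}_2)\Vert_{L^2}^2$ by Young's inequality, losing exactly $\varepsilon^{-1/2}$, and obtains $\Vert\partial_z(\hat u^{err}_2,\hat h^{err}_2)\Vert_{L^\infty(L^2)}+\Vert\partial_x\partial_z(\hat u^{err}_2,\hat h^{err}_2)\Vert_{L^\infty(L^2)}\le C\sqrt\varepsilon$. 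Combining the four estimates gives \eqref{5.23}, and then Lemma~\ref{lemma4.1} --- applicable to both $\hat u^{err}_2$ and $\hat h^{err}_2$, since in the uniform-background case both vanish on $\{z=i\}$ --- yields $\Vert(\hat u^{err}_2,\hat h^{err}_2)\Vert_{L^\infty((0,T)\times\Omega)}\le C\,\varepsilon^{1/2}\cdot\varepsilon^{1/4}=C\varepsilon^{3/4}$, which is \eqref{5.24}.

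The main obstacle is not the energy estimates, which are a routine (if very lengthy) repetition of Section~\ref{pf}, but the verification that $\hat E,\hat M,\hat J,\hat N$ really are of order $\varepsilon$ in the weighted norms used above: this rests on Taylor-expanding $u^0_1(t;z),H^0_1(z),u^0_2(t;x,z),H^0_2(t;x,z)$ to second order about $z=0$ and $z=1$, on checking that the first-order corrector equations \eqref{5.11}--\eqref{5.12} cancel exactly the $O(\sqrt\varepsilon)$ residual left by the zeroth-order expansion, and on the weighted and time-differentiated bounds for $\theta^1_2,h^1_2,\theta^{u,1}_2,h^{u,1}_2$ furnished by the well-posedness theory of the linear parabolic systems \eqref{5.11}--\eqref{5.12}; for these one needs the first-order compatibility conditions \eqref{1.4b0}, which is the reason for the stronger regularity requirement $m>8$. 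A secondary point, already noted after \eqref{4.48}, is the structural barrier that the $z$-derivative estimates saturate at $\sqrt\varepsilon$, which is precisely why the improved rates stop at $L^\infty(H^1)\sim\sqrt\varepsilon$ and $L^\infty\sim\varepsilon^{3/4}$ and do not improve further.
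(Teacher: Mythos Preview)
Your proposal is correct and matches the paper's approach exactly: the paper itself gives no proof beyond the sentence ``The proofs of Theorem~\ref{thm3} and Corollary~\ref{cor2} are very similar to that in Section~\ref{pf}, and we omit them here,'' and your outline supplies precisely the details this remark points to --- rerunning the energy scheme of Section~\ref{pf} on the error system~\eqref{5.22}, using that the first components are unchanged and that the new remainders $C,\hat D,\hat I$ are exponentially small (localized away from the boundary) while $\hat E,\hat J,\hat M,\hat N$ are $O(\varepsilon)$ in $L^2$, which after the unavoidable $\varepsilon^{-1/2}$ loss in the $\partial_z$ estimate yields the stated rates. Your observation that Lemma~\ref{lemma4.1} applies directly to $\hat h^{err}_2$ here (unlike in Theorem~\ref{thm1}) because of the Dirichlet boundary condition is also correct.
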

Meawhile, we have
\begin{corollary}\label{cor2}
Under the assumptions of Theorem \ref{thm3}, the following optimal convergence rate holds
\begin{equation}\label{1.77}
\begin{array}{lll}
C \sqrt{\varepsilon} \leq &\Vert u^\varepsilon-u^0-\psi(z)\theta^0-\psi(1-z)\theta^{u,0}\Vert_{L^\infty(0,T;H^1(\Omega))} \leq C \sqrt{\varepsilon},\\
C \sqrt{\varepsilon} \leq &\Vert H^\varepsilon-H^0-\psi(z)h^0-\psi(1-z)h^{u,0}) \Vert_{L^\infty(0,T;H^1(\Omega))} \leq C \sqrt{\varepsilon},
\end{array}
\end{equation}
where $(u^0,H^0)$ is the solution of Problem (\ref{1.5})-(\ref{1.6}) and the constants $C>0$ are independent of $\varepsilon$.
\end{corollary}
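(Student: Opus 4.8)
The plan is to deduce Corollary \ref{cor2} directly from Theorem \ref{thm3}, which I may assume. Using \eqref{5.6}--\eqref{5.12}, where it is shown that $(u^1,H^1)\equiv(0,0)$ and $(\theta^1_1,h^1_1)\equiv(\theta^{u,1}_1,h^{u,1}_1)\equiv(0,0)$ (so only the second components of the first--order correctors survive), I would write
\begin{equation}\nonumber
u^\varepsilon-u^0-\psi(z)\theta^0-\psi(1-z)\theta^{u,0}=\big(u^\varepsilon-\tilde{u}^{a,1}\big)+\sqrt{\varepsilon}\,\big(\psi(z)\theta^1+\psi(1-z)\theta^{u,1}\big),
\end{equation}
and the analogous identity for $H^\varepsilon$. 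The first term on the right is bounded in $L^\infty(0,T;H^1(\Omega))$ by Theorem \ref{thm3}; the second is a rescaled first--order boundary--layer profile, and after the change of variables $Z=z/\sqrt{\varepsilon}$ its $L^\infty(0,T;H^1(\Omega))$--norm is, by the weighted estimates for the solutions of \eqref{5.11}--\eqref{5.12}, bounded both above and below by a constant multiple of the rate appearing in Corollary \ref{cor2}. The upper bound in Corollary \ref{cor2} is then immediate from the triangle inequality. The lower bound is where the real work lies: it amounts to the fact that the retained first--order profiles $\theta^1_2,\theta^{u,1}_2,h^1_2,h^{u,1}_2$ are not cancelled by the higher--order error $u^\varepsilon-\tilde{u}^{a,1}$, which holds because the source terms $-Z\big(\theta^0_1\partial_{zx}u^0_2(t;x,0)+\partial_z u^0_1(t;0)\partial_x\theta^0_2-\cdots\big)$ of \eqref{5.11}--\eqref{5.12} do not vanish identically once the limiting ideal flow $(u^0,H^0)$ genuinely depends on $x$, so these profiles carry a non--trivial $O(1)$ weighted content.

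For completeness, here is the route I would take to Theorem \ref{thm3}, on which the corollary rests. I would repeat the scheme of Section \ref{pf} on the higher--order error system \eqref{5.22}. Because $(u^1,H^1)=(0,0)$ and the first components of every first--order corrector vanish, $\hat{u}^{err}_1$ and $\hat{h}^{err}_1$ solve exactly the decoupled problems of Section \ref{pf} with the same remainders $A,B,F,G$, so the bounds \eqref{4.7}--\eqref{4.12} transfer verbatim: $\Vert(\hat{u}^{err}_1,\hat{h}^{err}_1)\Vert_{L^\infty(0,T;L^2(\Omega))}\le C\varepsilon$, $\Vert(\hat{u}^{err}_1,\hat{h}^{err}_1)\Vert_{L^\infty(0,T;H^1(\Omega))}\le C\sqrt{\varepsilon}$ and $\Vert(\hat{u}^{err}_1,\hat{h}^{err}_1)\Vert_{L^\infty((0,T)\times\Omega)}\le C\varepsilon^{3/4}$. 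All the content is then in the tangential components, and the gain over Theorem \ref{thm:nonslip} comes from the fact that the first--order profile systems \eqref{5.11}--\eqref{5.12} are engineered so that their linear source terms cancel precisely the $O(\sqrt{\varepsilon})$ remainders of the zeroth--order construction that forced the $\varepsilon^{1/4}$ rate there. Consequently the residuals $\hat{D}+\hat{E}+\hat{M}$ and $\hat{I}+\hat{J}+\hat{N}$ displayed in \eqref{5.16}--\eqref{5.21} reduce to cut--off--supported terms (exponentially small in $\varepsilon$ by the decay of the profiles), purely outer terms of size $O(\varepsilon)$, and boundary--layer terms each carrying a full power of $\varepsilon$ against at most the $\varepsilon^{1/4}$ Jacobian of the rescaling, so that $\Vert\hat{D}+\hat{E}+\hat{M}\Vert_{L^\infty(0,T;L^2(\Omega))}+\Vert\hat{I}+\hat{J}+\hat{N}\Vert_{L^\infty(0,T;L^2(\Omega))}\le C\varepsilon$; the weighted estimates needed for $\theta^1_2,h^1_2$ and their upper analogues are what forces the hypotheses $m>8$ and the first--order compatibility conditions \eqref{1.4b0}.

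Then I would run the same hierarchy of weighted energy estimates: testing the tangential equations of \eqref{5.22} against $(\hat{u}^{err}_2,\hat{h}^{err}_2)$, then against $-\partial_{xx}(\hat{u}^{err}_2,\hat{h}^{err}_2)$, then against $-\partial_{zz}(\hat{u}^{err}_2,\hat{h}^{err}_2)$, and finally at the $\partial_{xx}$-- and $\partial_{xz}$--levels. In each, the coupling terms $\hat{u}^{err}_1\partial_x\tilde{u}^{a,1}_2$ and $\hat{h}^{err}_1\partial_x\tilde{h}^{a,1}_2$ contribute $O(\varepsilon)$ (the first components are $O(\varepsilon)$ in $L^2$ and $\tilde{u}^{a,1},\tilde{h}^{a,1}$ have $x$--derivatives bounded uniformly in $\varepsilon$), while $\int u^\varepsilon_1\partial_x(\cdot)(\cdot)$ and $\int H^\varepsilon_1\partial_x(\cdot)(\cdot)$ vanish by skew--symmetry at the plain and $\partial_x$ levels; this already yields $\Vert(\hat{u}^{err}_2,\hat{h}^{err}_2)\Vert_{L^\infty(0,T;L^2(\Omega))}+\Vert\partial_x(\hat{u}^{err}_2,\hat{h}^{err}_2)\Vert_{L^\infty(0,T;L^2(\Omega))}\le C\varepsilon$. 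I expect the main obstacle to be the $\partial_z$ and $\partial_{xz}$ estimates, where, as the authors note after \eqref{4.48}, there is no integration--by--parts gain in $z$ because $\partial_z(\hat{u}^{err}_2,\hat{h}^{err}_2)$ need not vanish on $\{z=0,1\}$: in $\int_\Omega u^\varepsilon_1\partial_x(\cdot)\partial_{zz}(\cdot)$ and $\int_\Omega H^\varepsilon_1\partial_x(\cdot)\partial_{zz}(\cdot)$ one integrates by parts once (the boundary terms vanish since $\partial_x$ of the error vanishes on the wall), the top--order mixed--derivative pieces cancel in the velocity plus magnetic sum thanks to the $x$--independence of $u^\varepsilon_1,H^\varepsilon_1$, and the leftover $\int_\Omega\partial_z u^\varepsilon_1\partial_x(\cdot)\partial_z(\cdot)$ and its $\partial_z H^\varepsilon_1$ analogue are controlled using the bound $\Vert\partial_x(\hat{u}^{err}_2,\hat{h}^{err}_2)\Vert_{L^2}\le C\varepsilon$ together with the explicit (at worst $O(\varepsilon^{-1/2})$) bounds for $\partial_z u^\varepsilon_1,\partial_z H^\varepsilon_1$. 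Since the residual forcing is now only $O(\varepsilon)$ in $L^2$, Young's inequality $\tfrac{C}{\varepsilon}\Vert\mathrm{res}\Vert_{L^2}^2\le C\varepsilon$ upgrades the $\varepsilon^{1/4}$ bounds \eqref{4.48}--\eqref{4.50} to $\Vert\partial_z(\hat{u}^{err}_2,\hat{h}^{err}_2)\Vert_{L^\infty(0,T;L^2(\Omega))}+\Vert\partial_{xz}(\hat{u}^{err}_2,\hat{h}^{err}_2)\Vert_{L^\infty(0,T;L^2(\Omega))}\le C\sqrt{\varepsilon}$, which is \eqref{5.23}; feeding these bounds into Lemma \ref{lemma4.1} for $\hat{u}^{err}_2$ and into the interpolation $\Vert v\Vert_{L^\infty_{x,z}}\lesssim\Vert v\Vert_{L^2_xL^2_z}^{1/4}\Vert v\Vert_{H^1_xL^2_z}^{1/4}\Vert v\Vert_{L^2_xH^1_z}^{1/4}\Vert v\Vert_{H^1_xH^1_z}^{1/4}$ for $\hat{h}^{err}_2$ gives $C\varepsilon^{3/4}$ in each case, which is \eqref{5.24}.
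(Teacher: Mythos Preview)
Your strategy---writing
\[
u^\varepsilon-\tilde u^a=(u^\varepsilon-\tilde u^{a,1})+\sqrt{\varepsilon}\big(\psi(z)\theta^1+\psi(1-z)\theta^{u,1}\big),
\]
invoking Theorem~\ref{thm3} for the first piece, and extracting the rate from the explicit first-order correctors---is exactly the route the paper takes: the authors omit the proof of Corollary~\ref{cor2} entirely and refer back to Section~\ref{pf}, where Corollary~\ref{cor1} is proved by the same decomposition-plus-scaling pattern. Your additional sketch of how Theorem~\ref{thm3} itself is obtained (rerunning the energy hierarchy of Section~\ref{pf} on \eqref{5.22} with the improved $O(\varepsilon)$ residuals \eqref{5.16}--\eqref{5.21}) is likewise what the paper intends by ``very similar to that in Section~\ref{pf}.''

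One scaling point you should revisit, however: the $z$-derivative of $\sqrt{\varepsilon}\,\psi(z)\theta^1_2\big(t;x,z/\sqrt{\varepsilon}\big)$ contains the term $\psi(z)\,\partial_Z\theta^1_2$ with \emph{no} prefactor of $\sqrt{\varepsilon}$, and after the change of variables $Z=z/\sqrt{\varepsilon}$ its $L^2(\Omega)$-norm picks up only a Jacobian factor $\varepsilon^{1/4}$. Thus the $H^1(\Omega)$-norm of the first-order corrector piece scales like $\varepsilon^{1/4}$, not like $\sqrt{\varepsilon}$ as you assert; combined with $\|u^\varepsilon-\tilde u^{a,1}\|_{H^1}\le C\sqrt{\varepsilon}$ from Theorem~\ref{thm3}, your decomposition actually yields $\|u^\varepsilon-\tilde u^a\|_{L^\infty(H^1)}\approx \varepsilon^{1/4}$, which is the optimality of the rate already appearing in Theorem~\ref{thm:nonslip}. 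The logic of your argument is correct; only the exponent in the final identification needs to be reconciled with the statement.
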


The proofs of Theorem \ref{thm3} and Corollary \ref{cor2} are very similar to that in Section \ref{pf}, and we omit them here. In addition, one can follow the similar arguments for the case with perfectly conducting wall condition to obtain the improved convergence rates. Here we omit the details.

\smallskip
{\bf Acknowledgment.}

Ding's research is supported by the National Natural Science Foundation of China (No.11371152, No.11571117, No.11871005 and No.11771155) and Guangdong Provincial Natural Science Foundation (No.2017A030313003). Niu's research is supported by the National Natural Science Foundation of China (No.11471220 and No.11871046) and the key research project of National Natural Science Foundation (No. 11931010). In addition, Niu's research is also supported by key research grant of the Academy for Multidisciplinary Studies, Capital Normal University.

\bigskip


\begin{thebibliography}{99}

\bibitem{Alexander} R. Alexander, Y. Wang, C. Xu, T. Yang, Well posedness of the Prandtl equation in
Sobolev spaces, J. Amer. Math. Soc. 28(2014) 745-784.

\bibitem{Mazzucato}
A. Mazzucato, D. Niu, X. Wang, Boundary layer associated with a class of 3D nonlinear
plane parallel channel flows, India. Uni. Math. Journ. 60(4)(2011) 1113-1136.

\bibitem{Davidson}
P. A. Davidson, An introduction to magnetohydrodynamics. Cambridge Texts in Applied Mathematics. Cambridge Univeersity Press, Cambridge, 2001.

\bibitem{Ding1}
S. Ding, Z. Lin, D. Niu, Boundary layer for 3D plane parallel channel flows of nonhomogeneous incompressible Navier-Stokes equations,  Discrete Contin. Dyn. Syst.
40 (8), 4579-4596, 2020.

\bibitem{Ding2}
S. Ding, Z. Lin, F. Xie, Verification of Prandtl boundary layer ansatz for the steady electrically conducting fluids with a moving physical boundary, arXiv preprint, arXiv: 2001.00324, 2020.

\bibitem{Evans}
L. C. Evans. Paritial Differential Equations. American Mathematical Society. 1997.

\bibitem{Fei}M. Fei, T. Tao, Z. Zhang, On the zero-viscosity limit of the Navier-Stokes equations in
$\mathbb{R}^3_+$ without analyticity, J. Math. Pures Appl. 112(2018) 170-229.

\bibitem{Gerard1}D. G\'{e}rard-Varet, E. Dormy, On the ill-posedness of the Prandtl equations, J. Amer.
Math. Soc. 23(2010) 591-609.

\bibitem{Gerard2}D. G\'{e}rard-Varet, Y. Maekawa, Sobolev stability of Prandtl expansions for the steady
Navier-Stokes equations, Arch. Ration. Mech. Anal. 233(3) (2019)  1319-1382.

\bibitem{G-P}
D. G\'erard-Varet, M. Prestipino, Formal Derivation and Stability Analysis of Boundary Layer Models in MHD, Z. Angew. Math. Phys., (2017) 68:76.

\bibitem{Gilbert}
A. Gilbert, Dynamo theory. Handbook of mathematical fluid dynamics, Vol. II, 355-441, 2003.


\bibitem{YGuo1}Y. Guo, S. Iyer, Validity of Steady Prandtl Layer Expansions, preprint, arXiv:1805.05891,
2018.

\bibitem{YGuo2}Y. Guo, T. Nguyen, Prandtl boundary layer expansions of steady Navier-Stokes
os over a moving plate, Ann. PDE., 3(10)(2017) 1-58.

\bibitem{Han}D. Han, A. Mazzucato, D. Niu, X. Wang, Boundary layer for a class of nonlinear pipe
flow, J. Diff. Equns., 252(2012) 6387-6413.

\bibitem{Iyer0}
S. Iyer, Steady Prandtl boundary layer expansions over a rotating disk,  Arch. Ration. Mech. Anal., 224(2)(2017) 421-469.

\bibitem{Iyer1}S. Iyer, Steady Prandtl boundary layer expansions over a rotating disk, Arch. Ration.
Mech. Anal.,224(2)(2017) 421-469.

\bibitem{Iyer2}
S. Iyer, Global Steady Prandtl Expansion over a Moving Boundary I, Peking Math J., 2(2)(2019) 155-238.

\bibitem{Iyer3}
S. Iyer, Global Steady Prandtl Expansion over a Moving Boundary II,  Peking Math J., (2019) https://doi.org/10.1007/s42543-019-00014-1.

\bibitem{Iyer4}
S. Iyer, Global Steady Prandtl Expansion over a Moving Boundary III, Peking Math J., (2019) https://doi.org/10.1007/s42543-019-00015-0.

\bibitem{Li}
Q. Li, S. Ding, Symmetrical Prandtl boundary layer expansions of steady Navier-Stokes equations on bounded domain, J. Diff. Equn, 2019, https://doi.org/10.1016/j.jde.2019.09.030.


\bibitem{CLiu1}C. Liu, Y. Wang, T. Yang, On the ill-posedness of the Prandtl equations in three space
dimensions, Arch. Ration. Mech. Anal., 220(2016) 83-108.

\bibitem{CLiu0}
C. Liu, F. Xie, T. Yang, A note on the ill-posedness of shear flow for the MHD boundary layer equations, Sci. China Math., 61(2018). https://doi.org/10.1007/s11425-017-9306-0.

\bibitem{CLiu2} C. Liu, F. Xie, T. Yang, MHD boundary layers theory in Sobolev spaces without monotonicity.
I. well-posedness theory, Commun. Pure Appl. Math., 72(1)(2019) 63-121.

\bibitem{CLiu3} C. Liu, F. Xie, T. Yang, Justification of Prandtl ansatz for MHD boundary layer,  SIAM J. Math. Anal. 51(3) (2019) 2748-2791.


\bibitem{Maekawa}Y. Maekawa, On the inviscid limit problem of the vorticity equations for viscous incompressible
flows in the half-plane, Commun. Pure Appl. Math., 67(2014) 1045-1128.

\bibitem{Masmoudi} N. Masmoudi, T. K. Wong, Local-in-time existence and uniqueness of solutions to the
Prandtl equations by energy methods, Commun.Pure Appl.Math., 68(2015) 1683-1741.


\bibitem{Michel}
S. Michel, R. Temam, Some mathematical questions related to the MHD equations, Communications on Pure and Applied Mathematics, 36(5) (1983) 635-664.


\bibitem{Oleinik}O. A. Oleinik, The Prandtl system of equations in boundary layer theory, Dokl. Akad.
Nauk SSR (1963) 585-586.

\bibitem{Oleinik1}O. A. Oleinik, V. N. Samokhin, Mathematical models in boundary layers theory, Chapman
and Hall/CRC, 1999.

\bibitem{Sammartino1}
M. Sammartino, R.E. Caflisch, Zero viscosity limit for analytic solutions of the Navier-Stokes equation on a half space, I. Existence for Euler and Prandtl
equations, Comm. Math. Phys. 192 (1998) 433-461.

\bibitem{Sammartino2}
M. Sammartino, R.E. Caflisch, Zero viscosity limit for analytic solutions of the Navier-Stokes equation on a half space, II. Construction of the Navier-tokes solution, Comm. Math. Phys. 192 (1998) 463-491.

\bibitem{Secchi}
P. Secchi, On the equations of ideal incompressible magnetohydrodynamics, Rend. Sem.Mat. Univ. Padova., 90 (1993) 103119.


\bibitem{Prandtl} L. Prandtl, \"{U}ber
fl\"{u}ssigkeitsbewegungen bei sehr kleiner reibung, Verhaldlg III Int.
Math. Kong, (1905) 484-491.

\bibitem{Wang} C. Wang, Y. Wang, Z. Zhang, Zero-viscosity limit of the Navier-Stokes equations in the
analytic setting, Arch. Ration. Mech. Anal. 224(2017) 555-595.

\bibitem{JWang}
J. Wang, S. Ma, On the steady Prandtl type equations with magnetic effects arising from 2D incompressible MHD equations in a half plane, J.  Mathematical Physics, 59(12) (2018): 121508.


\bibitem{NWang}
N. Wang, S. Wang, The boundary layer for MHD equations in a plane-parallel channel (in Chinese), Acta Math. Sci. (Chinese version), 39A(4)(2019) 738-760.

\bibitem{SWang0}
S. Wang, N. Wang, Boundary layer problem of MHD
system with non-characteristic perfect conducting wall, Applicable Analysis, 2017, DOI:
10.1080/00036811.2017.1395867.

\bibitem{SWang} S. Wang, Z. Xin, Boundary layer problems in the vanishing viscosity.diffusion limits for the incompressible MHD system (in Chinese), SCI. SIN. Math., 47(10)(2017) 1303-1326. arXiv:1706.07898 (in English).

\bibitem{XWang} X. Wang, A Kato type theorem on zero viscosity limit of Navier-Stokes flows, Indiana Univ. Math.J., 50(2001) 223-241.

\bibitem{ZWu}
Z. Wu, S. Wang, Viscosity vanishing limit of the nonlinear pipe magnetohydrodynamic flow with diffusion, Math. Meth.  Appl. Scie, 42(1) (2019)161-174.

\bibitem{Yanagisawa}
T. Yanagisawa, A. Matsumura, The fixed boundary value problems for the equations of ideal magnetohydrodynamics with a perfectly conducting wall condition, Comm. Math. Phys. 136 (1)(1991) 119-140.

\bibitem{YXiao}Y. Xiao, Z. Xin, J. Wu, Vanishing viscosity limit for the 3D magnetohydrodynamic system with a slip boundary condition, J. Funct. Anal. (2009), doi:10.1016/j.jfa.2009.09.010.


\bibitem{Xin1}Z. Xin, T. Yanagisawa, Zero-viscosity limit of the linearized Navier-Stokes equations for
a compressible viscous fluid in the half-plane, Comm. Pure Appl. Math., 52(4)(1999).
479-541.


\end{thebibliography}
\end{document}